\def\RSthmtxt{theorem~}\newref{thm}{name = \RSthmtxt}}
\def\RSlemtxt{lemma~}\newref{lem}{name = \RSlemtxt}}
\theoremstyle{plain}
\newtheorem{thm}{\protect\theoremname}
\theoremstyle{definition}
\newtheorem{defn}[thm]{\protect\definitionname}
\theoremstyle{remark}
\newtheorem{rem}[thm]{\protect\remarkname}
\theoremstyle{plain}
\newtheorem{cor}[thm]{\protect\corollaryname}
\theoremstyle{plain}
\newtheorem{conjecture}[thm]{\protect\conjecturename}
\theoremstyle{plain}
\newtheorem{lem}[thm]{\protect\lemmaname}
\theoremstyle{definition}
\newtheorem{example}[thm]{\protect\examplename}
\date{}
\providecommand{\conjecturename}{Conjecture}
\providecommand{\corollaryname}{Corollary}
\providecommand{\definitionname}{Definition}
\providecommand{\examplename}{Example}
\providecommand{\lemmaname}{Lemma}
\providecommand{\remarkname}{Remark}
\providecommand{\theoremname}{Theorem}
\begin{document}
	\global\long\def\goinf{\rightarrow\infty}
	\global\long\def\gozero{\rightarrow0}
	\global\long\def\bra{\langle}
	\global\long\def\ket{\rangle}
	\global\long\def\union{\cup}
	\global\long\def\intersect{\cap}
	\global\long\def\abs#1{\left|#1\right|}
	\global\long\def\norm#1{\left\Vert #1\right\Vert }
	\global\long\def\floor#1{\left\lfloor #1\right\rfloor }
	\global\long\def\ceil#1{\left\lceil #1\right\rceil }
	\global\long\def\expect{\mathbb{E}}
	\global\long\def\e{\mathbb{E}}
	\global\long\def\r{\mathbb{R}}
	\global\long\def\n{\mathbb{N}}
	\global\long\def\q{\mathbb{Q}}
	\global\long\def\c{\mathbb{C}}
	\global\long\def\z{\mathbb{Z}}
	\global\long\def\grad{\nabla}
	\global\long\def\t{^{\prime}}
	\global\long\def\all{\forall}
	\global\long\def\eps{\varepsilon}
	\global\long\def\quadvar#1{V_{2}^{\pi}\left(#1\right)}
	\global\long\def\cal#1{\mathcal{#1}}
	\global\long\def\cross{\times}
	\global\long\def\del{\nabla}
	\global\long\def\parx#1{\frac{\partial#1}{\partial x}}
	\global\long\def\pary#1{\frac{\partial#1}{\partial y}}
	\global\long\def\parz#1{\frac{\partial#1}{\partial z}}
	\global\long\def\part#1{\frac{\partial#1}{\partial t}}
	\global\long\def\partheta#1{\frac{\partial#1}{\partial\theta}}
	\global\long\def\parr#1{\frac{\partial#1}{\partial r}}
	\global\long\def\curl{\nabla\times}
	\global\long\def\rotor{\nabla\times}
	\global\long\def\one{\mathbf{1}}
	\global\long\def\Hom{\text{Hom}}
	\global\long\def\pr#1{\text{Pr}\left[#1\right]}
	\global\long\def\almost{\mathbf{\approx}}
	\global\long\def\tr{\text{Tr}}
	\global\long\def\var{\text{Var}}
	\global\long\def\onenorm#1{\left\Vert #1\right\Vert _{1}}
	\global\long\def\twonorm#1{\left\Vert #1\right\Vert _{2}}
	\global\long\def\Inj{\mathfrak{Inj}}
	\global\long\def\inj{\mathsf{inj}}
	
	\global\long\def\g{\mathfrak{\cal G}}
	\global\long\def\f{\mathfrak{\cal F}}

\title{Exponential random graphs behave like mixtures of stochastic block
	models}

\author{Ronen Eldan\thanks{Weizmann Institute of Science. Email: ronen.eldan@weizmann.ac.il.
		Partially supported by the Israel Science Foundation, grant 715/16.} ~and Renan Gross\thanks{Weizmann Institute of Science. Email: renan.gross@weizmann.ac.il}}
\maketitle

\begin{abstract}
We study the behavior of exponential random graphs in both the sparse and the dense regime. We show that exponential random graphs are approximate mixtures of graphs with independent edges whose probability matrices are critical points of an associated functional, thereby satisfying a certain matrix equation. In the dense regime, every solution to this equation is close to a block matrix, concluding that the exponential random graph behaves roughly like a mixture of stochastic block models. We also show existence and uniqueness of solutions to this equation for several families of exponential random graphs, including the case where the subgraphs are counted with positive weights and the case where all weights are small in absolute value. In particular, this generalizes some of the results in a paper by Chatterjee and Diaconis from the dense regime to the sparse regime and strengthens their bounds from the cut-metric to the one-metric. 
\end{abstract}

\tableofcontents{}
	
\section{Introduction}

With the emergent realization that large networks abound in science
(e.g metabolic networks), technology (e.g the internet), and everyday
life (e.g social networks), there has been widespread interest in
probabilistic models which capture the behavior of real life networks. 

The simplest random graph is the Erd\H{o}s-R\'{e}nyi $G\left(N,p\right)$
model of graphs with independent edges. While this model is well understood,
real networks often exhibit dependencies between the edges: For example,
in a social network, if two people have many mutual friends, it is
more likely that they themselves are friends. 

A natural and well studied model which captures edge dependencies
is the exponential random graph model, denoted here by $G_{N}^{f}$.
In this model, the probability to obtain a graph $G$ on $N$ vertices

\begin{equation}
\pr{G_{N}^{f}=G}=\exp\left(f\left(G\right)\right)/Z,\label{eq:definition_of_exponential_graph}
\end{equation}
where $f$ is a real functional on graphs called the ``Hamiltonian''
and $Z$ is a normalizing constant. Typically, $f$ is a ``subgraph-counting
function'' of the form 
\[
f\left(G\right)=\sum_{i=1}^{\ell}\beta_{i} N \left( H_i, G\right ),
\]
where the function $N\left(H_i, G\right)$ counts how many times the graph $H_{i}$
appears as a subgraph of $G$. The parameters $\beta_{i}$ are called
``weights'', and may be either positive or negative. For a review
of exponential graphs, see the papers in \cite{fienberg2010_I,fienberg2010_II}.

Despite the simple definition of this distribution, many basic aspects
about its behavior are far from being well-understood. For example,
there is at present no known explicit formula for the normalizing
constant. 

One of the first rigorous papers on the topic is due to Bhamidi, Bresler,
and Sly \cite{bbs2011}, which analyzes the mixing of the associated
Glauber dynamics in the case that subgraphs are counted with positive
weights, and gives a sufficient condition on those weights (referred
to as the ``high temperature regime'') under which any finite collection
of edges are asymptotically independent.

Another significant advance towards understanding the dense case was
done in a paper of Chatterjee and Diaconis \cite{chatterjee_diaconis_2013},
based on the technology developed in \cite{chatterjee_varadhan_2011},
which uses graph limit theory. They associate the normalizing constant
with a variational problem, showing that every exponential graph distribution
is close to the minimizing set of some functional on the space of
graphons. Further, if the Hamiltonian of this distribution counts
subgraphs only positively, then under the cut-metric the exponential
random graph is close to a $G\left(N,p\right)$ graph. 
In \cite{kenyon_et_al_bipodal_2018} and \cite{kenyon_et_al_multipodal_2014}, the graphon
framework also served the investigation of a similar problem, that of 
computing the asymptotic structure of graphs with constrained densities
of subgraphs.

More recently, in \cite{2016_eldan_gaussian_width}, it was shown
that an exponential graph is close in expectation to a mixture of
independent graphs. Unfortunately, this result gives no information
about the structure of those independent graphs.

\paragraph*{Our contributions}

In this work, we take one further step towards a better understanding
of exponential random graphs. We strengthen the existing results in
the following three ways:
\begin{enumerate}
\item We characterize the structure of the independent graphs of the mixture
model in \cite{2016_eldan_gaussian_width} by showing that the elements
of the mixture approximately obey a certain fixed point equation.
In particular, we show that under certain conditions, exponential
random graphs behave like mixtures of so-called stochastic block models.
\item We strengthen the results of both \cite{chatterjee_diaconis_2013}
and \cite{bbs2011} by characterizing the graph structure in terms
of the one-norm. This norm induces a stronger metric than the cut-metric
on the space of graphons, and gives some information about the nature
of dependence between the edges and other aspects which are not captured
by the cut-metric. 
\item Our characterization is meaningful not only in the dense regime, but
also in a limited range of sparse graphs as well. In particular, several
of our results hold for an edge density $p$ which depends polynomially
on $N$, e.g $p\geq N^{-c}$ for some $c>0$.
\end{enumerate}
The following is an overview of our main theorems. An independent
graph is a random graph whose edges are independent Bernoulli random
variables. Denote by $X$ the expected adjacency matrix of such a
graph. In \textbf{Theorem} \ref{thm:main_theorem}, we show that for
every subgraph-counting function $f$, the corresponding exponential
graph behaves like a mixture of independent graphs whose associated
expectations satisfy 
\[
\norm{X-\left(\one+\tanh\left(\grad f\left(X\right)\right)\right)/2}_{1}=o\left(N^{2}\right),
\]
where $\one$ is the matrix with zero on the diagonal and whose off-diagonal
entries are $1$, the $\tanh$ is applied entrywise, and $\onenorm X=\sum_{i,j}\abs{X_{ij}}$
is the one-norm. Using this result, we then characterize our mixtures
in three different settings:
\begin{enumerate}
\item \textbf{Theorem} \ref{thm:general_block_theorem} shows that every
subgraph-counting exponential random graph is $o\left(N^{2}\right)$
close to a mixture of stochastic block models with a small number
of blocks. 
\item \textbf{Theorem} \ref{thm:positive_uniqueness} roughly shows that
if the subgraphs are counted only with positive weights, then there
exists a constant matrix $X_{c}$ so that for every mixture element
$X$, $\norm{X-X_{c}}_{1}=o\left(N^{2}\right).$ Thus, the graph behaves
like $G\left(N,p\right)$.
\item \textbf{Theorem } \ref{thm:small_weights} shows that if the absolute values 
of the weights $\beta$ are small enough, then there exists a constant matrix $X_{c}$ so that
for every mixture element $X$, $\onenorm{X-X_{c}}=o\left(N^{2}\right).$
\end{enumerate}

\section{Background and notation}

Throughout the entire paper, $N>0$ is an integer that represents
the number of vertices and $n={N \choose 2}$ represents the number
of possible edges in an $N$ vertex simple graph. For two vertices $v$ and $u$ in a graph, $v \sim u$ denotes that $v$ is adjacent to $u$. We denote the discrete
hypercube by $\cal C_{n}=\left\{ 0,1\right\} ^{n}$ and the continuous
hypercube by $\overline{\cal C_{n}}=\left[0,1\right]^{n}$.

For ease of notation, we identify the vectors $\cal C_{n}$ with the
family of symmetric matrices of size $N\times N$ where the diagonal
entries are $0$ and the above diagonal entries are $0$ or $1$.
Such matrices correspond to simple graphs: For $X\in\cal C_{n}$,
the vertex $i$ is connected to vertex $j$ if and only if $X_{ij}=1$.
We therefore also identify the vector $X$ with the graph it represents.
For two graphs $G,G'$ whose corresponding vectors are $X,Y$, we
use the notation $\norm{G-G'}_{1}$ for $\onenorm{X-Y}$.

This view extends also to vectors $X\in\overline{\cal C_{n}}$, by
identifying with $X$ the weighted graph whose edge weights are $\left(X\right)_{ij}$.

Thus, any function acting on a vector $X\in\overline{\cal C_{n}}$
can also be seen as a function acting on a symmetric $N\times N$
matrix with $0$ diagonal or on a weighted graph on $N$ vertices,
and vice versa. 

We denote by $\one$ the matrix with zero on the diagonal and whose
off-diagonal entries are $1$.

\subsection{Subgraph counting functions\label{subsec:background_subgraph_counting_functions}}
\begin{defn}[Injective homomorphism density]
Let $G$ be a simple graph on $N$ vertices and let $H$ be a simple graph on $m$ vertices. Denote by $\Inj\left(H,G\right)$ the set
of injective homomorphisms from $H$ to $G$, that is, the set of functions $\phi:V\left(H\right)\rightarrow V\left(G\right)$ such
that if $x,y\in H$ and $x\sim y$, then $\phi\left(x\right)\sim\phi\left(y\right)$, and if $\phi\left(x\right)=\phi\left(y\right)$, then $x=y$. Denote the number of such homomorphisms by $\inj\left(H,G\right)=\abs{\Inj\left(H,G\right)}$. The ``\emph{injective homomorphism density}'' of $H$ is defined as 
\[
t\left(H,G\right)=\frac{\inj\left(H,G\right)}{N\left(N-1\right)\cdot\ldots\cdot\left(N-m+1\right)}.
\]
\end{defn}
\begin{defn}[Subgraph-counting function]
Let $\ell,N>0$ be integers. Let $H_{1},\ldots H_{\ell}$ be finite
simple graphs and $\beta_{1},\ldots,\beta_{\ell}$ be real numbers.
The functional $f$ on simple graphs with $N$ vertices defined by
\begin{equation}
f\left(G\right)=N\left(N-1\right)\sum_{i=1}^{\ell}\beta_{i}t\left(H_{i},G\right)\label{eq:definition_of_subgraph_counting}
\end{equation}
is called a ``\emph{subgraph-counting function}''.

As we will see below (in Section \ref{sec:proof_of_main_theorem})
the normalization $N\left(N-1\right)$ is natural since under this
normalization, the typical values of $f$ are of the same order as
the entropy of the graph.
\end{defn}

\begin{rem}
Subgraph counting functions are sometimes defined not by injective
homomorphisms but by all \emph{general} homomorphisms, denoted by
$\Hom\left(H,G\right)$. For our purposes, however, it is more convenient
to use injective homomorphisms to count subgraphs. The difference
between the injective homomorphism density and the general homomorphism
density is asymptotically small, so this distinction will not matter
in asymptotic calculations, and our results are equally valid for
general homomorphism densities. See \cite[Section 5.2.1--5.2.3]{large_networks_and_graph_limits}
for more details on such distinctions.
\end{rem}

Depending on both the weights and the subgraphs that are counted,
when using a subgraph-counting function as the Hamiltonian of an exponential
random graph, the resulting graph can be either sparse or dense. For
example, suppose that for a graph $G=\left(V,E\right)$ we define
\[
f\left(G\right)=\abs E\log\frac{p}{1-p}
\]
for some $p\in\left(0,1\right)$. Then 
\begin{align*}
\exp\left(f\left(G\right)\right)=\exp\left(\abs E\log\frac{p}{1-p}\right) & =p^{\abs E}\left(1-p\right)^{-\abs E}.
\end{align*}
The normalizing constant in this case is just $Z=\left(1-p\right)^{{N \choose 2}}$,
so that 
\begin{equation}
\pr{G_{N}^{f}=G}=p^{\abs E}\left(1-p\right)^{{N \choose 2}-\abs E}.\label{eq:sparse_exponential_graphs}
\end{equation}
This is exactly the $G\left(N,p\right)$ distribution, and if $p\rightarrow0$
when $N\goinf$ we obtain a sparse graph. 

\begin{defn}[Discrete gradient, Lipschitz constant] \label{def:discrete_gradient}

Let $f:\cal C_{n}\rightarrow\r$ be a real function on the Boolean
hypercube. The discrete derivative of $f$ at coordinate $i$ is defined as
\[
\partial_{i}f\left(Y\right)=\frac{1}{2}\left(f\left(Y_{1},\ldots,Y_{i-1},1,Y_{i+1},\ldots Y_{n}\right)-f\left(Y_{1},\ldots,Y_{i-1},0,Y_{i+1},\ldots Y_{n}\right)\right).
\]
With this we define both the the discrete gradient:
\[
\grad f\left(Y\right)=\left(\partial_{1}f\left(Y\right),\ldots,\partial_{n}f\left(Y\right)\right),
\]
and the Lipschitz constant of $f$: 
\[
\text{Lip}\left(f\right)=\max_{i\in\left[n\right],Y\in\cal C_{n}}\abs{\partial_{i}f\left(Y\right)}.
\]
Note that subgraph-counting functions and their gradients were originally
defined on simple graphs, or, alternatively, on vectors in $\cal C_{n}$.
However, they can be naturally extended to weighted graphs, or, alternatively,
to vectors in $\overline{\cal C_{n}}$, in the following way.

For a simple graph $G$, let $X$ be its adjacency matrix. A subgraph-counting
function $f$ that counts only a single graph $H=\left(\left[m\right],E\right)$
has the form (this is a slight variation from \cite[Lemma 33]{2016_eldan_gaussian_width}):
\end{defn}

\begin{equation}
f\left(G\right)=\frac{\beta}{\left(N-2\right)\left(N-3\right)\ldots\left(N-m+1\right)}\sum_{\underset{\text{\ensuremath{q} has distinct elements}\text{ }}{q\in\left[N\right]^{m}}}\prod_{\left(l,l'\right)\in E}X_{q_{l},q_{l'}}.\label{eq:how_to_calculate_f}
\end{equation}
Further, for an edge $e=\left\{ i,j\right\} $, the derivative satisfies

\begin{equation}
\partial f_{ij}\left(G\right)=\frac{\beta}{\left(N-2\right)\left(N-3\right)\ldots\left(N-m+1\right)}\sum_{\left(a,b\right)\in E}\sum_{\underset{q_{a}=i,q_{b}=j}{\underset{\text{\ensuremath{q} has distinct elements}}{q\in\left[N\right]^{m}}}}\prod_{\underset{\left\{ l,l'\right\} \neq\left\{ a,b\right\} }{\left(l,l'\right)\in E}}X_{q_{l},q_{l'}}.\label{eq:how_to_calculate_grad_f}
\end{equation}
As can be seen, both $f\left(G\right)$ and each entry of $\grad f\left(G\right)$
are just polynomials in the entries of $X$. This notation allows
us to extend $f$'s and $\grad f$'s domain to $\left[0,1\right]^{n}$,
and thus to weighted matrices and graphs. Note that since we count
injective homomorphisms and the entries of the vector $q$ in the
above calculation are distinct, the degree of each variable is either
$0$ or $1$. Further by equation (\ref{eq:how_to_calculate_grad_f}),
for every $x\in\left[0,1\right]$ we have that 
\[
\partial_{ij}f\left(x\one\right)=\beta\abs Ex^{\abs E-1}.
\]

\subsection{The variational approach }

To state the results of Chatterjee and Diaconis, we briefly present
some definitions from graph limit theory; for a detailed exposition,
see \cite[part 3]{large_networks_and_graph_limits}. Denote by $\cal W$
the space of all measurable functions $w:\left[0,1\right]^{2}\rightarrow\left[0,1\right]$,
and by $\tilde{\cal W}$ the space of equivalence classes of $\cal W$
under the equivalence relation $g\sim h\iff$there exists a measure
preserving bijection $\sigma:\left[0,1\right]\rightarrow\left[0,1\right]$
such that $g\left(x,y\right)=h\left(\sigma\left(x\right),\sigma\left(y\right)\right)=\left(\sigma h\right)\left(x,y\right)$.
The space $\tilde{W}$ is called the space of \emph{graphons}.

For every graph $G$ on $N$ vertices, it is possible to assign a
graphon $\tilde{G}$ by
\[
\tilde{G}\left(x,y\right)=\begin{cases}
1 & \ceil{xN}\sim\ceil{yN}\,\text{in \ensuremath{G}}\\
0 & o.w.
\end{cases}
\]
With this correspondence, every distribution on graphs induces a distribution
on graphons by the pushforward mapping. 

For any continuous bounded function $w:\left[0,1\right]^{2}\rightarrow\r$,
its cut-norm is defined as 

\[
\norm w_{\square}=\sup_{S,T\subseteq\left[0,1\right]}\abs{\int_{S\times T}w\left(x,y\right)dxdy}.
\]
This defines a metric on the space of graphons by $d_{\square}\left(\tilde{g},\tilde{h}\right)=\inf_{\sigma}\norm{\sigma g-h}_{\square}$,
where the infimum is taken over all measure preserving bijections
$\sigma$ as above. 

The results of Chatterjee and Diaconis can now be framed as follows. 
\begin{thm}[Theorem 3.2 in \cite{chatterjee_diaconis_2013}]
Let $f:\tilde{\cal W}\rightarrow\r$ be a continuous bounded functional.
Denote by $G_{N}^{f}$ the exponential random graph whose Hamiltonian
is $f\left(\tilde{G}\right)$. Then there exists a bounded continuous
functional $\varphi_{f}:\tilde{\cal W}\rightarrow\r$ which depends
on $f$ with the following property. Denote by $\tilde{F}^{*}$ the
set of graphons maximizing $\varphi_{f}$ . Then for any $\eta>0$
there exist $C,\gamma>0$ such that 
\[
\pr{d_{\square}\left(\tilde{G}_{N}^{f},\tilde{F}^{*}\right)>\eta}\leq Ce^{-N^{2}\gamma}.
\]
\end{thm}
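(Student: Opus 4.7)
The plan is to follow the strategy of Chatterjee and Diaconis, which combines the Chatterjee--Varadhan large deviation principle for Erdős--Rényi graphs in the cut metric with Varadhan's lemma. The exponential random graph $\tilde G_N^f$ is, by its very definition, a Boltzmann tilt of $G(N,1/2)$ by the weight $e^{f(\tilde G)}$ (up to the trivial factor $2^n$), so an LDP for the uniform random graphon converts directly into a concentration statement for $\tilde G_N^f$ on the maximizers of the tilted rate functional.

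First I would invoke the Chatterjee--Varadhan LDP: on $(\tilde{\cal W}, d_\square)$, the random graphon $\widetilde{G(N,1/2)}$ satisfies a large deviation principle at speed $n = \binom{N}{2}$ with good rate function equal to the graphon entropy
$$I(\tilde h) = \frac{1}{2}\int_{[0,1]^2}\bigl(h\log h + (1-h)\log(1-h) + \log 2\bigr)dx\,dy.$$
Assuming $f$ is scaled so that the tilt occurs at the natural speed $n$ (as in \cite{chatterjee_diaconis_2013}), define $\varphi_f := f - I$. Boundedness and continuity of $f$, together with lower semicontinuity of $I$ on the compact space $(\tilde{\cal W}, d_\square)$, make $\varphi_f$ upper semicontinuous, so $\tilde F^* := \arg\max \varphi_f$ is nonempty and compact. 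Varadhan's lemma then yields
$$\lim_{N\goinf}\frac{1}{n}\log Z = \sup_{\tilde h \in \tilde{\cal W}}\varphi_f(\tilde h).$$

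For the exponential concentration bound, fix $\eta > 0$ and set $A_\eta = \{\tilde h : d_\square(\tilde h, \tilde F^*) \geq \eta\}$. By upper semicontinuity of $\varphi_f$ and compactness of $\tilde F^*$, the deficit $\delta := \sup_{\tilde{\cal W}}\varphi_f - \sup_{A_\eta}\varphi_f$ is strictly positive. Writing
$$\pr{\tilde G_N^f \in A_\eta} = \frac{\expect\bigl[e^{f(\widetilde{G(N,1/2)})}\mathbf{1}_{A_\eta}\bigr]}{\expect\bigl[e^{f(\widetilde{G(N,1/2)})}\bigr]},$$
the LDP upper bound applied to the numerator together with the LDP lower bound applied to the denominator produces the claimed $Ce^{-\gamma N^2}$ decay for any $0<\gamma<\delta/2$ and suitable $C = C(\eta, f)$, once $N$ is large enough; the $N^2$ scaling follows from $n = \Theta(N^2)$.

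The principal obstacle is the Chatterjee--Varadhan LDP itself, which rests on two nontrivial ingredients: compactness of $(\tilde{\cal W}, d_\square)$ (the Lovász--Szegedy theorem) and the identification of the graphon entropy as the rate function for $G(N,1/2)$. Once the LDP is granted as a black box, the Varadhan tilting step is standard, and the only remaining technicality is cut-metric continuity of $f$, which is automatic for subgraph-counting Hamiltonians by the counting lemma relating homomorphism densities to the cut metric.
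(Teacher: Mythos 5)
This statement is quoted in the paper as background (Theorem 3.2 of \cite{chatterjee_diaconis_2013}) and is not proved there; the paper's ``proof'' is simply the citation. Your sketch correctly reproduces the strategy of the original Chatterjee--Diaconis argument: the Chatterjee--Varadhan LDP for the uniform random graph in the cut metric, the tilt by $e^{f}$, Varadhan's lemma for the partition function, and the LDP upper bound on the closed set $A_\eta$ combined with strict positivity of the deficit $\delta$ (which follows from upper semicontinuity of $\varphi_f$ and compactness of $(\tilde{\cal W},d_\square)$). Your remark about the normalization is the right one to flag --- as literally stated with bounded $f$ the tilt is invisible at speed $n$, and the intended Hamiltonian carries an $N^2$ (equivalently $n$) prefactor as in the original paper --- but modulo that convention the argument is the standard one and is sound.
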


As a corollary, they show the following result for subgraph counting
functions:
\begin{thm}[Theorem 4.2 in \cite{chatterjee_diaconis_2013}]
Assume that $H_{1}=K_{2}$ is the complete graph on two vertices
and that $\beta_{2},\ldots,\beta_{\ell}$ are all nonnegative. Then
the set of maximizers of $\varphi_{f}$ consists of a finite set of
constant graphons. Further, 
\[
\min_{\tilde{u}\in\tilde{F}^{*}}d_{\square}\left(\tilde{G}_{N}^{f},\tilde{u}\right)\rightarrow0\,\,\,\text{in probability as \ensuremath{N\goinf.}}
\]
\end{thm}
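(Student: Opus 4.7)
The plan is to combine the preceding Theorem 3.2 with a structural analysis of the maximizer set $\tilde F^*$ of $\varphi_f$, showing that the assumption of nonnegative higher-subgraph weights forces $\tilde F^*$ to collapse to a finite set of constant graphons; the probability statement then follows automatically, since convergence in the cut metric to a finite set of isolated points implies convergence in probability to that set.

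Thus it suffices to establish two claims: (i) every graphon in $\tilde F^*$ is the equivalence class of a constant graphon, and (ii) only finitely many constants maximize $\varphi_f$. For claim (i), my approach is to fix a candidate maximizer $u$ with mean $p = \int u$ and compare $\varphi_f(u)$ with $\varphi_f$ at the constant graphon $u \equiv p$. Strict convexity of the binary entropy integrand gives $I(u) > I(p)$ whenever $u$ is non-constant, so the $-I$ contribution strictly favors the constant replacement, and the edge-count term $\beta_1 t(K_2, u) = \beta_1 p$ is invariant under it. The delicate step, and the main obstacle of the proof, is controlling $\sum_{i \geq 2} \beta_i t(H_i, u)$: since the $\beta_i$ are nonnegative, these terms can in fact be \emph{larger} for a non-constant $u$ than for the constant $p$ (for example, triangle density is larger in block-structured graphons than in constant ones of the same mean), so a term-by-term comparison fails. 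To get around this, I would invoke the Euler--Lagrange equation satisfied by any maximizer, which reads $u^*(x,y) = \sigma\bigl((\partial f/\partial u)(u^*)(x,y)\bigr)$ with $\sigma(t) = (1 + \tanh t)/2$ the logistic function --- the graphon analogue of the fixed-point equation $X = (\one + \tanh(\nabla f(X)))/2$ appearing in Theorem \ref{thm:main_theorem}. For nonnegative weights the functional derivative on the right is monotone in each pointwise value $u^*(a,b)$, and a symmetrization/rearrangement argument (averaging over independent measure-preserving shufflings of the two coordinates, together with strict convexity of $-I$ to rule out equality) should then force $(\partial f/\partial u)(u^*)$, and therefore $u^*$, to be constant in $(x,y)$.

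For claim (ii), inserting the constant graphon $u \equiv p$ into the Euler--Lagrange equation reduces it to the scalar fixed point
\[
p \;=\; \sigma\!\left(\beta_1 + \sum_{i=2}^{\ell} \beta_i \, \abs{E(H_i)} \, p^{\abs{E(H_i)}-1}\right),
\]
whose right-hand side is a real-analytic function of $p \in [0,1]$ which is not identically equal to $p$; hence the equation has only finitely many solutions. The global maximizers of $\varphi_f$ on constants form a subset of these roots, hence a finite set. Combining (i) and (ii) characterizes $\tilde F^*$ as a finite set of constant graphons, after which Theorem 3.2 immediately delivers the in-probability convergence statement.
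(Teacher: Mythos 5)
This is a quoted result of Chatterjee and Diaconis, so the comparison is against their proof rather than one in this paper. Your reduction of claim (ii) to a real-analytic scalar equation and your use of Theorem 3.2 for the in-probability statement are both fine, and you correctly identify the central obstacle in claim (i): for nonnegative weights the densities $t(H_i,\cdot)$ can be strictly \emph{larger} at a non-constant graphon than at the constant graphon with the same mean, so entropy alone cannot carry the comparison. But your proposed way around this --- passing to the Euler--Lagrange equation and arguing via pointwise monotonicity plus ``averaging over measure-preserving shufflings'' --- has a genuine gap. First, the Euler--Lagrange equation is only a first-order condition, and non-constant solutions of it do exist (Theorem \ref{thm:two_block_model} of the present paper exhibits a two-block solution for triangle counting; even for positive weights, the min/max argument of Section \ref{subsec:general_uniqueness} rules out non-constant solutions only under the \emph{additional} hypothesis that the scalar equation $x=\varphi_{\boldsymbol{\beta}}(x)$ has a unique root). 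So any correct proof must exploit global maximality, not criticality. Second, the symmetrization step does not close: the functional being maximized is not concave --- the terms $t(H_i,u)$ are multilinear in the values of $u$ and can strictly decrease under averaging --- so replacing $u$ by its average over shufflings improves the entropy term but may worsen the subgraph terms, and the net comparison is exactly the one you already flagged as unavailable.

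The argument that actually works (and is the one in Chatterjee--Diaconis) is a global H\"older bound rather than a local one. For a graph $H$ with $e=\abs{E(H)}$ edges, applying H\"older's inequality with all exponents equal to $e$ to the product $\prod_{(i,j)\in E(H)}h(x_i,x_j)$ gives
\[
t\left(H,h\right)\leq\int_{\left[0,1\right]^{2}}h\left(x,y\right)^{e}\,dx\,dy .
\]
Since $\beta_{i}\geq0$ for $i\geq2$ and $t(K_{2},h)=\iint h$ exactly (so the possibly negative $\beta_{1}$ causes no trouble), one obtains
\[
\varphi_{f}\left(h\right)\leq\int_{\left[0,1\right]^{2}}\Bigl(\beta_{1}h+\sum_{i\geq2}\beta_{i}h^{e\left(H_{i}\right)}-I\left(h\right)\Bigr)\,dx\,dy\leq\sup_{p\in\left[0,1\right]}\Bigl(\beta_{1}p+\sum_{i\geq2}\beta_{i}p^{e\left(H_{i}\right)}-I\left(p\right)\Bigr),
\]
with equality forcing $h$ to take values a.e.\ in the (finite, by your real-analyticity argument) set of scalar maximizers and forcing equality in the H\"older step, whence $h$ is a.e.\ constant. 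This reduces the variational problem to the pointwise scalar one in a single stroke, which is precisely the ingredient your sketch is missing.
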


In other words, the exponential random graph $G_{N}^{f}$ is close
in the cut-distance to a distribution of Erd\H{o}s-R\'{e}nyi graphs $G\left(N,p\right)$
where $p$ is picked randomly from some probability distribution.

In a later paper, Chatterjee and Dembo \cite{chatterjee_dembo_2014}
derived a variational framework which yields nontrivial estimates
in the sparse regime. However, that framework does not seem to give
strong enough bounds on the partition function in order to charaterize
the associated distribution. 

\subsection{Mixture models}

In this paper, we are interested in approximating exponential random
graphs by mixtures of independent graphs. The following definitions
will be central to our results.
\begin{defn}[$\rho$-mixtures]
For $\vec{p}\in\left[0,1\right]^{{N \choose 2}}$, denote by $G\left(N,\vec{p}\right)$
the random graph with independent edges such that the edge $i \sim j$
appears with probability $\vec{p}_{ij}$. Let $\rho$ be a measure
on $\left[0,1\right]^{{N \choose 2}}$. We define the random vector
$G\left(N,\rho\right)$ by
\[
\pr{G\left(N,\rho\right)=G}=\int\pr{G\left(N,\vec{p}\right)=G}d\rho\left(\vec{p}\right).
\]
We say that $G\left(N,\rho\right)$ is a \emph{$\rho$-mixture}.
\end{defn}

\begin{defn}[Approximate mixture decomposition]
Let $\delta>0$ and let $\rho$ be a measure on $\left[0,1\right]^{{N \choose 2}}$.
A random graph $G$ is called a \emph{$\left(\rho,\delta\right)$-mixture
}if there exists a coupling between $G\left(N,\rho\right)$ and $G$
such that 
\[
\e\onenorm{G\left(N,\rho\right)-G}\leq\delta n.
\]

\end{defn}

A complementary result, given in \cite{2016_eldan_gaussian_width}
roughly states that an exponential random graph $G$ is close to a
$\left(\rho,o\left(1\right)\right)$-mixture in a way that most of
the entropy comes from the individual $G\left(N,\vec{p}\right)'s$ rather
than from the mixture.

For a random variable $X$ with law $\nu$, we define the entropy of $X$ as 
$$
\mathrm{Ent}(X) = \int -\log(\nu(x)) d\nu.
$$

\begin{thm}[Theorem 9 in \cite{2016_eldan_gaussian_width}]
\label{thm:eldan_mixture_theorem}For any positive integers $N,\ell$,
finite simple graphs $H_{1},\ldots,H_{\ell},$ real numbers $\beta_{1},\ldots,\beta_{\ell}$
and $\eps\in\left(0,1/2\right)$, the exponential graph defined in
\ref{eq:definition_of_exponential_graph}, is a $\left(\rho,\delta\right)$-mixture,
and such that 
\[
\delta\leq\frac{34n^{-1/12}}{\eps^{1/3}}\left(\sum_{i=1}^{\ell}\abs{\beta_{i}}\abs{E\left(H_{i}\right)}\right)^{1/3}
\]
with

\[
\mathrm{Ent}\left(G\left(N,\rho\right)\right)\leq\int\mathrm{Ent}\left(G\left(N,\vec{p}\right)\right)d\rho\left(\vec{p}\right)+\eps{N \choose 2}.
\]

\end{thm}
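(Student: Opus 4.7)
The plan is to build the mixture via a stochastic localization / pathwise tilt of the exponential measure $\mu(X)\propto e^{f(X)}$ on $\cal C_{n}$. First I would introduce the family of tilted measures $\mu^{Y}(X) \propto \mu(X)\exp(\langle Y,X\rangle)$ for $Y\in\r^{n}$, and write $\vec{p}(Y)=\e_{\mu^{Y}}[X]\in[0,1]^{n}$ for its mean. Using a Brownian motion $B_{t}$ in $\r^{n}$ and the change-of-measure identity for $\mu^{B_{t}}$, a direct martingale computation yields the decomposition $\mu=\e[\mu^{B_{T}}]$ at any stopping time $T$. Taking $\rho$ to be the pushforward under $Y\mapsto \vec{p}(Y)$ of the law of $B_{T}$ realizes $\mu$ as a mixture of tilted measures indexed by candidate edge-probability vectors $\vec{p}$.

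The next step is to show that for most realizations of $Y$, the tilt $\mu^{Y}$ is close in total variation to the genuine product law $G(N,\vec{p}(Y))$. The Hamiltonian of $\mu^{Y}$ is $\tilde{f}_{Y}(X)=f(X)+\langle Y,X\rangle$ whose discrete gradient $\grad\tilde{f}_{Y}=\grad f(X)+Y$ differs from its mean-field surrogate $\grad f(\vec{p}(Y))+Y$ by at most $\mathrm{Lip}(\grad f)\cdot\onenorm{X-\vec{p}(Y)}$; since a subgraph-counting functional satisfies $\mathrm{Lip}(\grad f)\leq C\sum_{i}\abs{\beta_{i}}\abs{E(H_{i})}$ (each mixed second derivative corresponds to deleting two edges from some $H_{i}$ and picking the remaining vertex labels, and the $N(N-1)$ normalization kills one power of $N$), replacing $\grad\tilde{f}_{Y}$ by its value at $\vec{p}(Y)$ produces a linear Hamiltonian, whose Gibbs measure is precisely the product Bernoulli with marginals $(\one+\tanh(\grad f(\vec{p}(Y))+Y))/2$. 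The total-variation error of this mean-field replacement is controlled by $\mathrm{Lip}(\grad f)^{1/2}\cdot \tr(\mathrm{Cov}(\mu^{Y}))^{1/2}$, via an entropy-to-variance comparison on the hypercube.

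Along the localization process, the covariance matrix $\mathrm{Cov}(\mu_{t})$ satisfies a differential inequality of the form $\partial_{t}\e\,\tr\mathrm{Cov}(\mu_{t})\leq -\e\onenorm{\mathrm{Cov}(\mu_{t})}^{2}$, forcing $\tr\mathrm{Cov}(\mu_{T})$ to be $O(n/T)$ at a suitably chosen random stopping time. Combining with the bound from the previous paragraph and integrating against $\sigma$ gives
\[
\e\onenorm{G-G(N,\rho)}\lesssim n\cdot\bigl(\mathrm{Lip}(\grad f)\bigr)^{1/3} T^{-1/6},
\]
after optimizing the mean-field error against the covariance-decay rate. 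The entropy inequality is then obtained by Jensen's inequality applied to the convex functional $\mathrm{Ent}$: $\mathrm{Ent}(G(N,\rho))\leq\int\mathrm{Ent}(G(N,\vec{p}))d\rho(\vec{p})+\mathrm{Ent}(\rho \Vert \text{Unif})$, and the mutual-information term is at most $\frac{1}{2}T\cdot \e\tr\mathrm{Cov}(\mu_{T}) \leq \eps n$ provided $T$ is not too large. Setting $T\sim n^{1/2}\eps$ balances the two estimates and yields the claimed $n^{-1/12}\eps^{-1/3}$ rate in $\delta$ and the $\eps\binom{N}{2}$ entropy defect.

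The main obstacle is the mean-field replacement in the second paragraph: quantitatively transferring ``the Hamiltonian is almost linear'' into ``the measure is almost product'' in $L^{1}$. On the hypercube this is delicate because the log-Sobolev constant depends on the temperature, and the naive entropy-chain bound loses too much. The right route is to compare $\mu^{Y}$ to its linearization through an interpolation between Hamiltonians $\tilde{f}_{Y}$ and its affine surrogate, controlling the relative entropy derivative by the quadratic form of $\mathrm{Lip}(\grad f)$ against $\mathrm{Cov}(\mu^{Y})$; this is the step where the exponent $1/3$ on $\sum\abs{\beta_{i}}\abs{E(H_{i})}$ enters.
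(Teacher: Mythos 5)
This statement is not proved in the paper at all: it is quoted verbatim as Theorem~9 of \cite{2016_eldan_gaussian_width} and used as a black box, so there is no in-paper argument to compare your proposal against. What you have written is an attempt to reconstruct the proof of that external result, and while the overall architecture (decomposing the Gibbs measure into exponential tilts via a Brownian/localization process, pushing forward the stopped law to obtain $\rho$, and charging the entropy defect to the process) does match the strategy of the cited paper, there are genuine gaps in the reconstruction.

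The central one is that the quantity actually driving both the $\delta$-bound and the entropy defect in \cite{2016_eldan_gaussian_width} is the Gaussian-width gradient complexity $\cal D\left(f\right)=\boldsymbol{\mathrm{GW}}\left(\left\{ \grad f\left(Y\right)\right\} \right)$ — for subgraph counts $\cal D\left(f\right)\lesssim\sum_{i}\abs{\beta_{i}}\abs{E\left(H_{i}\right)}N^{3/2}\lesssim n^{3/4}$, which is exactly what produces the exponents $n^{-1/12}=\left(n^{3/4}/n\right)^{1/3}$ and the cube root of $\sum_{i}\abs{\beta_{i}}\abs{E\left(H_{i}\right)}$ in the statement. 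Your proposal never introduces this quantity; you substitute $\mathrm{Lip}\left(\grad f\right)$ throughout, which is a different functional of $f$, and you do not show how the claimed exponents would emerge from it. Two further steps are asserted rather than derived: the ``mean-field replacement'' bound $\mathrm{Lip}\left(\grad f\right)^{1/2}\tr\left(\mathrm{Cov}\right)^{1/2}$ (which you yourself flag as the main obstacle, and which in any case controls total variation, whereas the $\left(\rho,\delta\right)$-mixture definition requires an $\ell_{1}$ coupling bound, i.e.\ a transportation-type estimate that one gets more directly from the covariance of the localized measure); and the mutual-information bound $\tfrac{1}{2}T\,\e\tr\mathrm{Cov}\left(\mu_{T}\right)$, which is not justified and is not how the $\eps{N \choose 2}$ entropy defect is obtained in the source (there it again comes from the quadratic variation of the tilt process, controlled by the Gaussian width). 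As written, the proposal is a plausible outline of the right method but does not constitute a proof, and in the context of this paper the correct move is simply to cite the result.
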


\section{Results}

The results of this paper are based on the following technical statement
which is an application of the framework in \cite{eldan_gross_gibbs_distribution}.
This result gives a characterization of the measure $\rho$ described
above: With high probability with respect to $\rho$, the vector $\vec{p}$
is nearly a critical point of a certain functional associated with
$f$. In order to formulate this result, let us make some notation.

For every subgraph-counting function $f$ of the form (\ref{eq:definition_of_subgraph_counting}),
define the constant 
\begin{align*}
C_{\boldsymbol{\beta}} & =\max\left\{ 12\sum_{i=1}^{\ell}\abs{\beta_{i}}\abs{E\left(H_{i}\right)}^{2},2\right\} .
\end{align*}
Remark that $C_{\boldsymbol{\beta}}$ depends only on the graph counting
parameters, barring $N$. Denote by $\cal X_{f}$ the set 
\begin{equation}
\cal X_{f}=\left\{ X\in\left[0,1\right]^{n}:\onenorm{X-\left(\one+\tanh\left(\grad f\left(X\right)\right)\right)/2}\leq 5000C_{\boldsymbol{\beta}}^2 n^{15/16}\right\} ,\label{eq:main_theorem_inequality}
\end{equation}
with the $\tanh$ applied entrywise to the entries of $\grad f\left(X\right)$.
\begin{thm}[Product decomposition of exponential random graphs]
\label{thm:main_theorem}Let $f$ be a subgraph counting function.
There exists a measure $\rho$ on $\left[0,1\right]^n$ (which depends on $n$ and on $f$) such that $G_{n}^{f}$ is a $\left(\rho,80\frac{C_{\boldsymbol{\beta}}}{n^{1/16}}\right)$-mixture with
\[
\rho\left(\cal X_{f}\right)\geq1-80\frac{C_{\boldsymbol{\beta}}}{n^{1/16}}.
\]
\end{thm}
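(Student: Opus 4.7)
The plan is to combine the mixture decomposition from Theorem \ref{thm:eldan_mixture_theorem} with a mean-field variational argument. The starting observation is that $\cal X_f$ is a neighborhood of the critical set of the continuous functional
\[
F(\vec q) = f(\vec q) + H(\vec q), \qquad H(\vec q) = -\sum_{ij}\bigl(q_{ij}\log q_{ij}+(1-q_{ij})\log(1-q_{ij})\bigr),
\]
on $[0,1]^n$. Indeed, since subgraph-counting functions extend to multilinear polynomials on $[0,1]^n$ by \eqref{eq:how_to_calculate_f}, one has $\e_{G(N,\vec q)}[f] = f(\vec q)$ exactly, and the first-order stationarity condition for $F$ (with respect to continuous partial derivatives) reads $2\grad f(\vec q) = \log(\vec q/(\one-\vec q))$ --- the factor $2$ coming from the symmetric-derivative convention of Definition \ref{def:discrete_gradient}. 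Rearranging yields $\vec q = (\one+\tanh(\grad f(\vec q)))/2$, so $\cal X_f$ is exactly the $\ell^1$-tube of radius $5000 C_{\boldsymbol\beta}^2 n^{15/16}$ around $\{\grad F = 0\}$.

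The second step shows that a $\vec p$ drawn from $\rho$ is, in expectation, a near-maximizer of $F$. Combining (a) the Gibbs variational principle $F(\vec q) \leq \log Z$, (b) the entropy inequality from Theorem \ref{thm:eldan_mixture_theorem}, (c) a Fannes-type estimate translating the $\ell^1$ coupling between $G(N,\rho)$ and $G_n^f$ into a bound on $|\mathrm{Ent}(G(N,\rho)) - \mathrm{Ent}(G_n^f)|$, and (d) the multilinearity identity $\e_{G(N,\rho)}[f] = \int f(\vec p)\,d\rho(\vec p)$, a direct computation yields
\[
\int \bigl[\log Z - F(\vec p)\bigr]\,d\rho(\vec p) \leq O\bigl(C_{\boldsymbol\beta}\,\delta n + \varepsilon n\bigr).
\]
Choosing $\varepsilon$ and $\delta$ in Theorem \ref{thm:eldan_mixture_theorem} so as to balance the two summands against each other (and against the desired $\ell^1$-coupling rate) produces an expected free-energy defect of order $\mathrm{poly}(C_{\boldsymbol\beta})\,n^{15/16}$, and simultaneously yields the $(\rho, 80 C_{\boldsymbol\beta}/n^{1/16})$-mixture guarantee from the same parameter choice.

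The third step converts value-closeness into fixed-point-residual-closeness. Because $H$ is strictly concave on the interior of $[0,1]^n$ --- with second derivative bounded below in terms of the distance from $\{0,1\}^n$ --- and $f$ has Hessian of size at most $O(C_{\boldsymbol\beta}/N)$ entrywise (from \eqref{eq:how_to_calculate_grad_f}), the functional $F$ is locally strongly concave in the cube's interior. A quadratic-stability estimate there yields
\[
\onenorm{\vec p - (\one+\tanh(\grad f(\vec p)))/2} \leq O\!\bigl(C_{\boldsymbol\beta}\sqrt{n\,(\log Z - F(\vec p))}\bigr)
\]
whenever $\vec p$ is bounded away from $\{0,1\}^n$. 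Combining this bound with the expected free-energy defect of the previous step and applying Markov's inequality to the residual then yields $\rho(\cal X_f) \geq 1 - 80 C_{\boldsymbol\beta}/n^{1/16}$.

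The main obstacle is the third step, since $F$ is not globally strongly concave: the entropy degenerates at the boundary of the cube. The remedy is a two-part argument: first, show that any near-maximizer $\vec p$ of $F$ must lie in an interior region of $[0,1]^n$ --- specifically, bounded away from $\{0,1\}^n$ by a margin depending only on $C_{\boldsymbol\beta}$ --- using the uniform bound $|\grad f| \leq O(C_{\boldsymbol\beta})$ combined with the logarithmic blowup of the gradient of $H$ at the boundary; only then can the quadratic stability estimate be applied. Careful tracking of constants through this two-step conversion is what produces the specific prefactor $5000 C_{\boldsymbol\beta}^2$ and the exponent $15/16$ appearing in the definition of $\cal X_f$.
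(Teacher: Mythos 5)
Your proposal diverges fundamentally from the paper's proof, and the divergence hides a real gap. The paper proves Theorem \ref{thm:main_theorem} as a black-box application of Theorem \ref{thm:main_gibbs_theorem} (the main result of the Gibbs-decomposition paper), which already delivers \emph{both} the mixture property \emph{and} the concentration of $\rho$ on the approximate-fixed-point set; all that remains is to verify its hypotheses, namely the gradient complexity bound $\cal D(f)\leq C_{\boldsymbol{\beta}}n^{3/4}$, the Lipschitz bound $\mathrm{Lip}(f)\leq C_{\boldsymbol{\beta}}$ (Lemma \ref{lem:lipschitz_of_subgraph_hom}), and the Lipschitzness of $\grad f$ (Lemma \ref{lem:grad_is_lipschitz}). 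The exponent $15/16$ and the prefactor $5000C_{\boldsymbol{\beta}}^2$ come mechanically from substituting $D\leq C_{\boldsymbol{\beta}}n^{3/4}$ into $5000L_1L_2^{3/4}D^{1/4}n^{3/4}$; they have nothing to do with balancing variational error terms.

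The gap in your route is the third step. You need the functional $F=f+H$ to be (locally) strongly concave in order to convert a small free-energy defect $\log Z-F(\vec p)$ into a small fixed-point residual. But $F$ is not concave for general subgraph-counting functions: the Hessian of $f$ has rows with $\Theta(N)$ entries of size $\Theta(C_{\boldsymbol{\beta}}/N)$, so its operator norm is of order $C_{\boldsymbol{\beta}}$ and easily overwhelms the $-4\,\mathrm{Id}$ coming from $H$ once the weights are not small. Precisely in the interesting regimes (large positive weights past the high-temperature threshold, or the negative-weight regime of Theorem \ref{thm:two_block_model}), $F$ has multiple critical points, including non-global local maxima and saddles, so no quadratic stability estimate of the form you write can hold; and the theorem being proved asserts concentration near \emph{critical points}, not near \emph{near-maximizers} — the paper explicitly contrasts this with Chatterjee--Diaconis on exactly this point. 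Your argument, if it worked, would prove a statement of a different (stronger, and in general false-by-this-method) shape. Two secondary issues: the ``Fannes-type'' step relating $\mathrm{Ent}(G(N,\rho))$ to $\mathrm{Ent}(G_n^f)$ from an $\ell^1$-coupling is itself a nontrivial transportation--entropy continuity estimate on $\{0,1\}^n$ that you would need to supply, and even granting everything, value-closeness to $\max F$ only controls $\|\grad F\|$ through a modulus involving the second derivative of $H$, which blows up at the boundary of the cube, so the interiority reduction must be done for near-maximizers (not just critical points), which again leans on the concavity you do not have.
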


In other words, almost all the mass of the mixture resides on random
graphs whose adjacency matrices $X$ almost satisfy the fixed point
equation

\begin{equation}
X=\frac{\one+\tanh\left(\grad f\left(X\right)\right)}{2}.\label{eq:fixed_point_equation}
\end{equation}

\begin{rem}
	In fact, more is known about the structure of the measure $\rho$. Following the notation in \cite{eldan_gross_gibbs_distribution}, for a vector $\theta \in \r^n$, the tilt $\tau_\theta \nu$ of a distribution $\nu$ is defined by 
	$$ \frac{d(\tau_\theta \nu)}{d\nu}(y) = \frac{e^{\langle \theta, y \rangle}}{\int_{\mathcal{C}_n}e^{\langle \theta, z \rangle}d\nu}.$$
	As it turns out, the measure $\rho$ in Theorem \ref{thm:main_theorem} is composed of small tilts, i.e., there exists a measure $m$ on $\r^n$ supported on small vectors $\theta$ such that $\rho$ is the pushforward of $m$ under the map $\theta \mapsto \mathbb{E}_{X \sim \tau_\theta \nu}\left[X\right]$. For more details, see \cite{eldan_gross_gibbs_distribution}.
\end{rem}

\begin{rem}
One can check that the solutions of the fixed point equation are exactly
the critical points of the functional $f\left(X\right)+H\left(X\right)$
where $H\left(X\right)=\sum_{i<j}X_{ij}\log X_{ij}+\left(1-X_{ij}\right)\log\left(1-X_{ij}\right)$
is the entropy of $X$. This is a variant of the functional that arises
in the variational problem in \cite{chatterjee_diaconis_2013}. 
\end{rem}

As described in \cite{eldan_gross_gibbs_distribution}, the solutions
to the equation $X=\left(\one+\tanh\left(\grad f\left(X\right)\right)\right)/2$
are critical points of a certain functional. Comparing our result
to Theorem 3.2 in \cite{chatterjee_diaconis_2013}: The latter shows
that the exponential graphs are close to global \emph{maxima} of the
variational problem, while the former only shows that it is close
to critical points; however, it gives a stronger, distributional description
and works beyond the dense regime.

Our first main result shows that in the dense regime, the matrices
obtained by Theorem \ref{thm:main_theorem} are close to matrices
that can be decomposed into a small number of blocks, defined as follows:

\begin{defn}[Stochastic block model]
Let $N,k>0$ be positive integers. A symmetric matrix $X\in\r^{N\times N}$
is called a ``\emph{block matrix}'' with $k$ communities, if there
exists a symmetric matrix $P\in\r^{k\times k}$ and a partition of
the indices $1,\ldots,N$ into $k$ disjoint sets $V_{1},\ldots,V_{k}$
such that for $i\in V_{\ell_{1}}$ and $j\in V_{\ell_{2}}$ with $\ell_1, \ell_2 \in \left[k \right]$,
\[
X_{ij}=P_{\ell_{1},\ell_{2}}.
\]
The sets $V_{1},\ldots,V_{k}$ are called the ``\emph{communities}''
of $X$. A random graph with independent edges whose expected adjacency
matrix is a block matrix is called a ``\emph{stochastic block model}''.
\end{defn}

\begin{thm}[Small number of communities for counting functions]
\label{thm:general_block_theorem}Let $0<\delta<1$ and let $f$
be a subgraph-counting function. Then there exists a constant $C_{\delta}>0$
(which depends on $\delta,$ the subgraphs $H_{i}$, and their weights
$\beta_{i}$ but is otherwise independent of $N$) such that for any
$X\in\cal X_{f}$, there exists a block matrix $X^{*}$ with no more
than $C_{\delta}$ communities such that 
\[
\onenorm{X-X^{*}}\leq\delta n+5000C_{\boldsymbol{\beta}}^2n^{15/16}.
\]
\end{thm}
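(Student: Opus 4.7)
The plan is to exploit the fixed-point characterization $X \approx \Phi(X)$, where $\Phi(Y) := (\one + \tanh(\grad f(Y)))/2$, together with a weak-regularity partition of $X$. Since $X \in \cal X_f$ gives $\onenorm{X - \Phi(X)} \leq 5000 C_{\boldsymbol{\beta}}^2 n^{15/16}$, the triangle inequality reduces the theorem to constructing a block matrix $X^*$ with at most $C_\delta$ communities such that $\onenorm{\Phi(X) - X^*} \leq \delta n$. I will obtain $X^*$ by applying $\Phi$ to the block-averaged version $\tilde X$ of $X$ coming from a regularity partition and then, if necessary, replacing the result by its own blockwise average so that it is genuinely constant on blocks.

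Specifically, apply the Frieze--Kannan weak regularity lemma to $X$ with parameter $\delta' > 0$ to obtain a partition $V_1, \ldots, V_K$ of $[N]$ with $K \leq 2^{O(1/\delta'^2)}$ and cut-norm bound $\|X - \tilde X\|_{\square} \leq \delta' N^2$, where $\tilde X$ is the blockwise average of $X$. Inspection of formula \eqref{eq:how_to_calculate_grad_f} shows that when $\tilde X$ is constant on each product block $V_\alpha \times V_\beta$, each entry $\partial_{ij} f(\tilde X)$ depends on $(i,j)$ only through the block labels $\pi(i), \pi(j) \in [K]$, up to $O(1/N)$ corrections coming from the distinctness constraint on the summation variable $q$. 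Consequently $\Phi(\tilde X)$ is, up to an $o(n)$ error in $\onenorm{\cdot}$, a block matrix on $(V_\alpha)_\alpha$; rounding to the blockwise average absorbs this correction and yields a genuine block matrix $X^*$ with at most $K$ communities. Since $\sigma(x) := (1 + \tanh(x))/2$ is $1$-Lipschitz, $\onenorm{\Phi(X) - \Phi(\tilde X)} \leq \onenorm{\grad f(X) - \grad f(\tilde X)}$, so the theorem reduces to an $L^1$ bound on the gradient difference.

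The crux of the argument---and the main obstacle---is showing $\onenorm{\grad f(X) - \grad f(\tilde X)} \leq \delta n / 2$. By \eqref{eq:how_to_calculate_grad_f} each entry of $\grad f$ is a multilinear polynomial of bounded degree in $X$, and the telescoping identity
\[
\prod_{e \in E} X_e - \prod_{e \in E} \tilde X_e = \sum_{e_0 \in E} \Bigl(\prod_{e < e_0} \tilde X_e\Bigr)(X - \tilde X)_{e_0}\Bigl(\prod_{e > e_0} X_e\Bigr),
\]
together with the duality $\onenorm{A} = \sup_{\phi \in [-1,1]^n}\langle A, \phi\rangle$, reduces the task to controlling sums of the form $\sum_{p, p'}(X - \tilde X)_{p, p'}\, S(p, p')$, where $S(p, p')$ is a weighted rooted homomorphism count whose summands are products of entries of $X$, $\tilde X$, and signs $\phi_{ij}$. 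The key point is that $\tilde X$ has rank at most $K$: writing $\tilde X_{jk} = \sum_\alpha \mathbb{1}[j \in V_\alpha]\cdot P_{\alpha, \pi(k)}$ and expanding every $\tilde X$-factor appearing in $S$ gives a decomposition $S(p, p') = \sum_\gamma u_p^{(\gamma)} w_{p'}^{(\gamma)}$ into rank-one pieces indexed by block labelings $\gamma$. Applying the cut-norm bound $\|X - \tilde X\|_{\square} \leq \delta' N^2$ to each rank-one piece and then using $\sum_\alpha |V_\alpha| = N$ to resum the block-indicator contributions yields $\onenorm{\grad f(X) - \grad f(\tilde X)} \leq C(f)\, \delta'\, n$ for a factor $C(f)$ depending only on the subgraphs and weights defining $f$ (the normalization in \eqref{eq:how_to_calculate_grad_f} absorbs the powers of $N$ coming from the interior vertices of each $H_i$). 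Choosing $\delta' := \delta / (4 C(f))$ and setting $C_\delta := K(\delta', f) = 2^{O(C(f)^2 / \delta^2)}$ completes the proof.
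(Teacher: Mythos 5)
Your overall route is different from the paper's and is viable, but the justification of your central estimate has a gap. The paper does not use regularity at all: it writes each entry $\partial_{ij}f(X)$ exactly as a bounded sum $\sum_r c_r\langle v_i^r,u_j^r\rangle$ of scalar products of explicitly constructed vectors of norm at most $1$ (built from rooted homomorphism counts), applies a random orthogonal projection (Johnson--Lindenstrauss) to dimension $k=O(\log(1/\delta)/\delta^2)$, and rounds the projected vectors to a $\delta$-net of size $(1+4/\delta)^{k+1}$; the net elements are the communities, and the $1$-Lipschitzness of $\tanh$ transfers the block approximation of $\grad f(X)$ to $X$ itself. Your plan replaces this with Frieze--Kannan weak regularity plus an $L^1$ ``counting lemma'' for gradients; both give $C_\delta$ exponential in $1/\delta^2$, and your version has the mild aesthetic advantage that the communities come from a genuine vertex partition of $X$ rather than from a net in an abstract projected space.

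The gap is in the claim that the low rank of $\tilde X$ yields the rank-one decomposition of $S(p,p')$. After telescoping, the coefficient of $(X-\tilde X)_{p,p'}$ contains factors of $X$ itself (the edges ``after'' $e_0$), and $X$ has no low-rank structure, so expanding only the $\tilde X$-factors does not produce a bounded sum of rank-one pieces in $(p,p')$; a general bilinear form $\sum_{p,p'}(X-\tilde X)_{p,p'}M_{p,p'}$ with $\|M\|_\infty\le 1$ is not controlled by the cut norm. The correct (and standard) mechanism is different: fix the values $q_v$ of all vertices of $H$ other than the two endpoints $l,l'$ of the replaced edge $e_0$; since $H$ is simple, every remaining factor (including the $\phi$-decorated edge) depends on at most one of $q_l,q_{l'}$, so for each fixed configuration the summand factors pointwise as $(X-\tilde X)_{q_l,q_{l'}}F(q_l)G(q_{l'})$ with $|F|,|G|\le 1$, and summing over $q_l,q_{l'}$ costs at most $4\delta' N^2$. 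Summing over the remaining $m-2$ vertices, the $2|E|(|E|-1)$ telescoping/orientation choices, and dividing by the normalization $(N-2)\cdots(N-m+1)$ in \eqref{eq:how_to_calculate_grad_f} gives $\onenorm{\grad f(X)-\grad f(\tilde X)}\le C(f)\delta' n$ as you want (the distinctness constraint on $q$ contributes only $O(N)=o(n)$). With this repair, and your blockwise-averaging step to make $\Phi(\tilde X)$ exactly constant on blocks, the argument goes through.
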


One can derive an explicit expression for the constant $C_{\delta}$,
which is in general exponential in $1/\delta^{2}$. The explicit dependence
in the case of triangle-counting functions is derived in the proof. 

Theorems \ref{thm:main_theorem} and \ref{thm:general_block_theorem}
combined give the following corollary:
\begin{cor}
For any finite set of graphs $H_{1},\ldots,H_{\ell}$, constants $\beta_{1},\ldots,\beta_{\ell}$
and any constant $\delta>0$ there exists a constant $C_{\delta}$
such that the following holds. For every $N$, there exists a measure
$\rho$ supported on block matrices with at most $C_{\delta}$ communities
such that if $G_{N}^{f}$ is the exponential random graph with the
Hamiltonian $f\left(g\right)=N\left(N-1\right)\sum_{i=1}^{\ell}\beta_{i}t\left(H_{i},g\right)$
then there is a coupling between $G_{N}^{f}$ and $G\left(N,\rho\right)$
which satisfies
\[
\e\onenorm{G_{N}^{f}-G\left(n,\rho\right)}\leq\delta{N \choose 2}.
\]
\end{cor}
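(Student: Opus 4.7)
The plan is to combine Theorems \ref{thm:main_theorem} and \ref{thm:general_block_theorem} by transferring $\ell^{1}$ closeness from the level of expected adjacency matrices down to sampled random graphs via a shared-randomness coupling. First I would apply Theorem \ref{thm:main_theorem} to obtain a measure $\rho_{0}$ on $[0,1]^{n}$ such that $G_{N}^{f}$ is a $(\rho_{0},\, 80 C_{\boldsymbol{\beta}}/n^{1/16})$-mixture with $\rho_{0}(\cal X_{f}) \geq 1 - 80 C_{\boldsymbol{\beta}}/n^{1/16}$. Next, I would apply Theorem \ref{thm:general_block_theorem} with parameter $\delta/3$: for every $X \in \cal X_{f}$ this yields a block matrix $B(X)$ with at most $C_{\delta/3}$ communities satisfying $\onenorm{X - B(X)} \leq (\delta/3)n + 5000 C_{\boldsymbol{\beta}}^{2} n^{15/16}$. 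Extend $B$ to $[0,1]^{n}$ arbitrarily (mapping $\cal X_{f}^{c}$ to some fixed block matrix) and let $\rho$ be the pushforward of $\rho_{0}$ under $B$; by construction $\rho$ is supported on block matrices with at most $C_{\delta/3}$ communities.

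I would build the coupling in two stages. The mixture coupling from Theorem \ref{thm:main_theorem} provides joint variables $(G_{N}^{f}, X, G(N, X))$ with $\e \onenorm{G_{N}^{f} - G(N, X)} \leq 80 C_{\boldsymbol{\beta}} n^{15/16}$. On top of this, I would sample a family of uniforms $U_{ij} \sim \mathrm{Unif}[0,1]$ independent of everything else and set $G(N, X)_{ij} := \one[U_{ij} \leq X_{ij}]$ and $G(N, B(X))_{ij} := \one[U_{ij} \leq B(X)_{ij}]$; a per-edge computation gives disagreement probability $|X_{ij} - B(X)_{ij}|$ and hence
\[
\e \onenorm{G(N, X) - G(N, B(X))} = \e \onenorm{X - B(X)}.
\]
By construction the marginal law of $G(N, B(X))$ is $G(N, \rho)$, so the triangle inequality delivers $\e \onenorm{G_{N}^{f} - G(N, \rho)} \leq 80 C_{\boldsymbol{\beta}} n^{15/16} + \e \onenorm{X - B(X)}$.

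Finally, I would split $\e \onenorm{X - B(X)}$ according to whether $X \in \cal X_{f}$: inside, the per-sample bound yields $(\delta/3)n + 5000 C_{\boldsymbol{\beta}}^{2} n^{15/16}$; outside, $\rho_{0}(\cal X_{f}^{c}) \leq 80 C_{\boldsymbol{\beta}}/n^{1/16}$ combined with the trivial bound $\onenorm{X - B(X)} \leq n$ contributes at most $80 C_{\boldsymbol{\beta}} n^{15/16}$. Summing, the total expected $\ell^{1}$ error is $(\delta/3) n + O(n^{15/16})$ where the implicit constants depend only on $\{H_{i}\}$ and $\{\beta_{i}\}$; hence for $N$ larger than some threshold $N_{0} = N_{0}(\delta, \{H_{i}\}, \{\beta_{i}\})$ it is bounded by $\delta \binom{N}{2}$. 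For the small cases $N \leq N_{0}$, I would take $\rho$ to be the pushforward of the law of $G_{N}^{f}$ under the identification of a $0/1$ adjacency matrix with a trivial $N$-community block matrix (one vertex per community); then $G(N, \rho)$ couples to $G_{N}^{f}$ exactly, and the bound is immediate. Setting $C_{\delta} := \max(C_{\delta/3}, N_{0})$ completes the argument. The proof amounts to bookkeeping on top of the two previous theorems; the only conceptual ingredient is the per-edge identity $\e \onenorm{G(N, p_{1}) - G(N, p_{2})} = \onenorm{p_{1} - p_{2}}$ under shared uniforms, which cleanly lifts expected-matrix $\ell^{1}$ distance to expected-graph $\ell^{1}$ distance.
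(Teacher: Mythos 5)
Your proposal is correct and is exactly the combination of Theorems \ref{thm:main_theorem} and \ref{thm:general_block_theorem} that the paper intends (the paper states the corollary follows from combining the two theorems and gives no further details). The shared-uniform coupling identity $\e\onenorm{G(N,p_{1})-G(N,p_{2})}=\onenorm{p_{1}-p_{2}}$, the splitting over $\cal X_{f}$ versus its complement, and the absorption of the $O(n^{15/16})$ error terms into $\delta\binom{N}{2}$ for large $N$ (with the trivial one-vertex-per-community fallback for small $N$) are precisely the bookkeeping needed.
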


We conjecture that Theorem \ref{thm:general_block_theorem} can be
strengthened as follows:
\begin{conjecture}
Let $f$ be a subgraph-counting function. Then there is a constant
$c$ independent of $N$ (but dependent on the weights $\beta_{i}$)
such that every $X\in\cal X_{f}$ is $o\left(n\right)$-close to a
block matrix with no more than $c$ communities.
\end{conjecture}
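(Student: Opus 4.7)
The plan is to bootstrap the statement of Theorem \ref{thm:general_block_theorem} from a $(\delta n)$-approximation with $C_\delta$ communities to an $o(n)$-approximation with a \emph{fixed} constant number of communities depending only on the subgraph data $H_1, \ldots, H_\ell$ and the weights $\beta_1, \ldots, \beta_\ell$. The starting observation is that any $X \in \cal X_f$ approximately satisfies the fixed point equation (\ref{eq:fixed_point_equation}), so the value of the entry $X_{ij}$ is essentially determined by $\grad f(X)_{ij}$, which by (\ref{eq:how_to_calculate_grad_f}) is a low-degree polynomial in the entries of $X$ involving only bounded-length graph traversals rooted at $\{i,j\}$.

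Working from this observation, the first step is to associate to each vertex $v \in [N]$ a \emph{profile vector} $\Phi(v) \in [0,1]^m$, whose coordinates enumerate, for each subgraph $H_i$ and each vertex $w \in V(H_i)$, the corresponding rooted subgraph density at $v$ inside the weighted graph $X$. The dimension $m$ depends only on $\sum_i |V(H_i)|$, not on $N$. I would then show that if $\Phi(u)$ and $\Phi(v)$ are close, then $\onenorm{X_u - X_v}/N$ is small: the rows $\grad f(X)_u$ and $\grad f(X)_v$ are close entrywise because each of their entries is a polynomial in quantities already captured by $\Phi$, and passing through the Lipschitz sigmoid $t \mapsto (1+\tanh(t))/2$ preserves this closeness.

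The second step is a covering argument on $\Phi([N]) \subset [0,1]^m$. Any $r$-cover of the unit cube in dimension $m$ has cardinality at most $O(r^{-m})$, yielding a partition of the vertices into $O(r^{-m})$ groups and hence a block matrix $X^*$ with this many communities satisfying $\onenorm{X - X^*} = O(\mathrm{poly}(r) \cdot n)$. This already recovers Theorem \ref{thm:general_block_theorem} with an explicit dependence of the form $C_\delta = O(\delta^{-\Theta(m)})$, but it still forces a trade-off between the number of communities and the approximation error. To eliminate the $\delta$, one would need to show that the image $\Phi([N])$ is not merely covered by constantly many balls of fixed radius, but rather \emph{concentrates} on a finite set of points whose cardinality depends only on $f$.

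The main obstacle is exactly this concentration statement. A natural route is to interpret the $\Phi$-profiles as being sampled from an underlying graphon which is itself an approximate critical point of the functional $f(W) + H(W)$ on graphon space; if such critical points were known to be step graphons with a constantly bounded number of steps, the conjecture would follow. Establishing this would require a rigidity theorem for the fixed point equation, perhaps via an implicit function theorem applied around each critical point showing that the linearized equation has trivial kernel after modding out graph symmetries. Proving such rigidity in full generality appears to be a substantial challenge and is closely tied to open questions about the structure of ground states of subgraph-counting functionals outside the high-temperature regime covered by Theorem \ref{thm:positive_uniqueness} and the small-weights regime of Theorem \ref{thm:small_weights}.
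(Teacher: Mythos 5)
First, be aware that this statement is an open \emph{conjecture} in the paper: the authors prove only the weaker Theorem \ref{thm:general_block_theorem}, in which the number of communities $C_{\delta}$ blows up as the error parameter $\delta$ tends to $0$, and they explicitly leave the present statement unresolved. Your proposal does not close that gap either: your final paragraph defers precisely the step that constitutes the conjecture (that the vertex profiles concentrate on a set of constant cardinality, equivalently a rigidity statement for the critical points of $f(W)+H(W)$), so what you have written is at best a reduction, not a proof.

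Second, there is a concrete error already in your first step. The gradient entry $\partial_{ij}f(X)$ is a \emph{joint} function of the pair $(i,j)$ --- for triangle counting it is proportional to $\sum_{k}X_{ik}X_{kj}$, the inner product of the $i$-th and $j$-th columns of $X$ --- and it is not determined by the rooted subgraph densities at $i$ and at $j$ taken separately. Two vertices $u,v$ can have identical profile vectors $\Phi(u)=\Phi(v)$ (same degree, same number of triangles, paths, etc.\ through them) while the corresponding rows of $\nabla f(X)$ differ by order $N$ in the one-norm: in the complete bipartite graph $K_{N/2,N/2}$ all vertices have the same rooted densities by vertex-transitivity, yet for $u$ and $v$ in opposite parts the codegrees $|N(u)\cap N(j)|$ and $|N(v)\cap N(j)|$ differ by $N/2$ for every $j$. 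So the implication ``$\Phi(u)\approx\Phi(v)$ implies the rows of $\nabla f(X)$ at $u$ and $v$ are close'' is false for general $X$, and establishing it for $X\in\mathcal{X}_{f}$ would itself require the rigidity you defer. This is exactly why the paper's proof of Theorem \ref{thm:general_block_theorem} works with the column vectors themselves, writing each gradient entry as a sum of inner products, applying a random orthogonal projection to reduce the dimension, and only then taking a $\delta$-net in the projected space: the $N$-dimensional column data cannot be summarized by a constant-dimensional vector of rooted densities, and the $\delta$-net in the projected space is what forces the $C_{\delta}\to\infty$ trade-off that the conjecture asks to remove.
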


Our second main result regarding the characterization of exponential
graphs applies to subgraph-counting functions with positive weights.
Its statement remains nontrivial for graphs with polynomially small
density, for some range of exponents, as will be demonstrated in Example
\ref{exa:sprasity_example}.

Following the notation of \cite{bbs2011}, we define $\varphi_{\boldsymbol{\beta}}:\left[0,1\right]\to\r$
by 
\[
\varphi_{\boldsymbol{\beta}}\left(x\right)=\frac{1+\tanh\left(\sum_{i=1}^{\ell}\beta_{i}\abs{E(H_{i})}x^{\abs{E(H_{i})}-1}\right)}{2}.
\]
Note that $\varphi_{\boldsymbol{\beta}}\left(x\right)$ is equal to
any off-diagonal entry of the constant matrix $\left(\one+\tanh\left(\grad f\left(x\one\right)\right)\right)/2$.
If the equation $x=\varphi_{\boldsymbol{\beta}}\left(x\right)$ has
a unique fixed point $x_{0}$, define the constant $D_{\boldsymbol{\beta}}=\sup_{\underset{x\neq x_{0}}{x\in\left[0,1\right]}}\frac{\abs{\varphi_{\boldsymbol{\beta}}\left(x\right)-x_{0}}}{\abs{x-x_{0}}}$.

The following simple lemma gives a useful bound on $D_{\boldsymbol{\beta}}$;
we present it without proof.
\begin{lem}
\label{lem:simple_constant_solution_D_bound}~
\begin{enumerate}
\item \label{enu:weak_existence}There exists an $x_{0}\in\left[0,1\right]$
such that $x_{0}=\varphi_{\boldsymbol{\beta}}\left(x_{0}\right)$.
Hence there always exists a constant solution $X_{c}=x_{0}\one$ to
the fixed point equation (\ref{eq:fixed_point_equation}).
\item \label{enu:high_temperature}Assume that $\varphi_{\boldsymbol{\beta}}\left(x\right)$
is increasing. If the solution $x_{0}$ is unique and $\varphi_{\boldsymbol{\beta}}'\left(x_{0}\right)<1$,
then $D_{\boldsymbol{\beta}}<1$.
\end{enumerate}
\end{lem}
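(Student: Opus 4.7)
The plan is to prove (\ref{enu:weak_existence}) by a direct application of the intermediate value theorem, and (\ref{enu:high_temperature}) by combining the sign of $g(x) := \varphi_{\boldsymbol{\beta}}(x) - x$, forced by uniqueness of the fixed point together with monotonicity of $\varphi_{\boldsymbol{\beta}}$, with a compactness argument that exploits the hypothesis $\varphi_{\boldsymbol{\beta}}'(x_0) < 1$.

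For (\ref{enu:weak_existence}), I would observe that $\tanh$ takes values in $(-1,1)$, so $\varphi_{\boldsymbol{\beta}}$ is a continuous map from $[0,1]$ into $(0,1)$; hence $g(0) > 0$ and $g(1) < 0$, and IVT supplies an $x_0 \in (0,1)$ with $\varphi_{\boldsymbol{\beta}}(x_0) = x_0$. To conclude that $X_c = x_0 \one$ actually solves (\ref{eq:fixed_point_equation}), I would invoke the identity $\partial_{ij} f(x\one) = \sum_{i=1}^{\ell} \beta_i |E(H_i)| x^{|E(H_i)|-1}$, which follows by linearity from the single-subgraph identity recorded just after equation (\ref{eq:how_to_calculate_grad_f}). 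This identity shows that each off-diagonal entry of $(\one + \tanh(\grad f(x_0 \one)))/2$ equals precisely $\varphi_{\boldsymbol{\beta}}(x_0) = x_0 = (X_c)_{ij}$.

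For (\ref{enu:high_temperature}), uniqueness of $x_0$ together with $g(0)>0$ and $g(1)<0$ force $g > 0$ on $[0, x_0)$ and $g < 0$ on $(x_0, 1]$ (any other sign pattern would, by continuity of $g$, produce a second zero). Combined with $\varphi_{\boldsymbol{\beta}}$ being increasing, this gives, for $x < x_0$, the sandwich $x < \varphi_{\boldsymbol{\beta}}(x) < \varphi_{\boldsymbol{\beta}}(x_0) = x_0$, and symmetrically for $x > x_0$, the sandwich $x_0 < \varphi_{\boldsymbol{\beta}}(x) < x$. In both cases $\varphi_{\boldsymbol{\beta}}(x)$ lies strictly between $x$ and $x_0$, which yields the pointwise strict inequality $|\varphi_{\boldsymbol{\beta}}(x) - x_0| < |x - x_0|$ on all of $[0,1] \setminus \{x_0\}$.

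The main obstacle is upgrading this pointwise strict bound to a strict bound on the supremum; this is where the derivative hypothesis enters essentially. The ratio $h(x) := |\varphi_{\boldsymbol{\beta}}(x) - x_0|/|x - x_0|$ is continuous on $[0,1] \setminus \{x_0\}$, and by the very definition of the derivative, $\lim_{x \to x_0} h(x) = |\varphi_{\boldsymbol{\beta}}'(x_0)|$, which is strictly less than $1$ by assumption (and nonnegative because $\varphi_{\boldsymbol{\beta}}$ is increasing). Thus $h$ extends to a continuous function $\tilde h$ on the compact interval $[0,1]$ with $\tilde h < 1$ everywhere, so by compactness $\sup \tilde h < 1$, giving $D_{\boldsymbol{\beta}} < 1$. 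Without the hypothesis on $\varphi_{\boldsymbol{\beta}}'(x_0)$, the ratio could tend to $1$ as $x \to x_0$ and the strict supremum bound could fail, so the role of this assumption is precisely to control the behavior of $h$ in a neighborhood of the fixed point.
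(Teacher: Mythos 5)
Your proof is correct. The paper explicitly states this lemma without proof, so there is no argument to compare against; your IVT argument for existence, the sign analysis of $\varphi_{\boldsymbol{\beta}}(x)-x$ forced by uniqueness and monotonicity, and the continuous extension of the difference quotient to $x_{0}$ (where the hypothesis $\varphi_{\boldsymbol{\beta}}'(x_{0})<1$ enters) followed by compactness is exactly the natural way to fill this gap, and every step checks out.
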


The condition in item (\ref{enu:high_temperature}) in the above lemma
is referred to in \cite{bbs2011} as the \emph{high temperature regime}.
\begin{thm}[Positive weights]
\label{thm:positive_uniqueness}Let $N>3$ be an integer. Let $H_{1},\ldots,H_{\ell}$
be graphs, let $\alpha\in\r$ and $\beta_{1},\ldots,\beta_{\ell}\in\r$
be real numbers and let $f$ be a subgraph-counting function 
\[
f\left(X\right)=\alpha\inj\left(K_{2},X\right)+N\left(N-1\right)\sum_{i=1}^{\ell}\beta_{i}t\left(H_{i},X\right)
\]
where $K_{2}$ is the complete graph on 2 vertices. Assume that $\beta_{i}\geq0$
are positive for all $i$, that the equation $x=\varphi_{\boldsymbol{\beta}}\left(x\right)$
has a unique solution $x_{0}$ and that $D_{\boldsymbol{\beta}}<1$.
Then for any $X\in\cal X_{f}$ and any $0<\lambda<1$,
\begin{equation}
\onenorm{X-x_{0}\one}\leq\lambda n+10000C_{\boldsymbol{\beta}}^3\lambda^{\frac{\log C_{\boldsymbol{\beta}}}{\log D_{\boldsymbol{\beta}}}}n^{15/16}.\label{eq:positive_bound}
\end{equation}
In particular, for any constants $C_{\boldsymbol{\beta}}$ and $D_{\boldsymbol{\beta}}$
, there exists constants $0<\gamma<1/16$ and $Q>0$ such that
\begin{equation}
\onenorm{X-x_{0}\one}\leq Q\cdot n^{1-\gamma}.\label{eq:optimized_lambda}
\end{equation}
\end{thm}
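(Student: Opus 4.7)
The plan is to exploit the approximate fixed-point identity provided by membership in $\cal X_f$ together with a monotone sandwich argument specific to nonnegative subgraph weights. Write $T(X) := (\one + \tanh(\grad f(X)))/2$, so that $X \in \cal X_f$ translates to $\onenorm{X - T(X)} \leq \eta$ with $\eta := 5000 C_{\boldsymbol{\beta}}^2 n^{15/16}$.

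I would first establish two elementary properties of $T$. Property (A), a Lipschitz estimate: $\onenorm{T(X) - T(Y)} \leq C_{\boldsymbol{\beta}} \onenorm{X - Y}$. This reduces, via the $1$-Lipschitz property of $\tanh$, to a Lipschitz bound on $\grad f$; using the multilinear expression (\ref{eq:how_to_calculate_grad_f}) for $\partial_{ij}f$, a telescoping expansion of each product and then a swap of summation order produces cross-derivative sums controlled by $\sum_i |\beta_i| |E(H_i)|(|E(H_i)|-1)$, which the definition of $C_{\boldsymbol{\beta}}$ dominates with considerable slack. Property (B), monotonicity: if $X \leq Y$ entrywise, then $T(X) \leq T(Y)$ entrywise. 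Here the hypothesis $\beta_i \geq 0$ is essential, since it renders the polynomial $\partial_{ij}f$ coefficient-nonnegative on its nonlinear part, so $\grad f$ is coordinatewise monotone; composition with the monotone $\tanh$ preserves this. In particular $T(x\one) = \varphi_{\boldsymbol{\beta}}(x)\one$ for every scalar $x \in [0,1]$.

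The heart of the argument is the following sandwich: since $0 \leq X \leq \one$ entrywise, iterating Property (B) gives
\[
\varphi_{\boldsymbol{\beta}}^k(0)\one \,=\, T^k(0) \,\leq\, T^k(X) \,\leq\, T^k(\one) \,=\, \varphi_{\boldsymbol{\beta}}^k(1)\one
\]
for every $k \geq 1$. The contraction hypothesis $D_{\boldsymbol{\beta}} < 1$ places both $\varphi_{\boldsymbol{\beta}}^k(0)$ and $\varphi_{\boldsymbol{\beta}}^k(1)$ within $D_{\boldsymbol{\beta}}^k$ of $x_0$, so every entry of $T^k(X)$ is within $D_{\boldsymbol{\beta}}^k$ of $x_0$ and hence $\onenorm{T^k(X) - x_0 \one} \leq D_{\boldsymbol{\beta}}^k n$. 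On the other hand, Property (A) together with the approximate fixed-point bound gives, via a telescoping geometric series,
\[
\onenorm{X - T^k(X)} \,\leq\, \sum_{j=0}^{k-1} C_{\boldsymbol{\beta}}^{j} \onenorm{X - T(X)} \,\leq\, 2 C_{\boldsymbol{\beta}}^k \eta.
\]
Combining these via the triangle inequality yields $\onenorm{X - x_0 \one} \leq 2 C_{\boldsymbol{\beta}}^k \eta + D_{\boldsymbol{\beta}}^k n$. Choosing $k = \lceil \log(1/\lambda)/\log(1/D_{\boldsymbol{\beta}}) \rceil$, so that $D_{\boldsymbol{\beta}}^k \leq \lambda$ and $C_{\boldsymbol{\beta}}^k \leq C_{\boldsymbol{\beta}} \cdot \lambda^{\log C_{\boldsymbol{\beta}}/\log D_{\boldsymbol{\beta}}}$, converts this into (\ref{eq:positive_bound}) after absorbing the remaining constants into the $10000\, C_{\boldsymbol{\beta}}^3$ prefactor. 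For (\ref{eq:optimized_lambda}), I would balance the two terms in (\ref{eq:positive_bound}) by setting $\lambda = n^{-1/(16\theta)}$ with $\theta := 1 - \log C_{\boldsymbol{\beta}}/\log D_{\boldsymbol{\beta}} > 1$, which yields the stated bound with $\gamma = 1/(16\theta) \in (0, 1/16)$.

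The main obstacle is essentially bookkeeping in Property (A): tracking the Lipschitz constant through the multilinear polynomial structure of $\partial_{ij}f$ requires careful combinatorial counting, though it is conceptually routine. The conceptually crucial step is the sandwich argument, which transfers the scalar contraction of $\varphi_{\boldsymbol{\beta}}$ to the matrix iteration $T^k$; this step uses the nonnegativity of the $\beta_i$ in an essential way and would fail without it, matching precisely the hypothesis of the theorem.
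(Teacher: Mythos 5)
Your proposal is correct and follows essentially the same route as the paper: the paper likewise iterates $\Phi(X)=(\one+\tanh(\grad f(X)))/2$, uses positivity of the $\beta_i$ to sandwich the iterates between $\varphi_{\boldsymbol{\beta}}^k$ applied to the extreme entries (so that $\onenorm{Y_k-x_0\one}\leq\lambda n$), controls $\onenorm{X-Y_k}$ by the same telescoping geometric series via the Lipschitz bound of Lemma \ref{lem:grad_is_lipschitz}, and makes the same choice $k=\lceil\log\lambda/\log D_{\boldsymbol{\beta}}\rceil$. Your sandwich $T^k(0)\leq T^k(X)\leq T^k(\one)$ is just a cosmetic rephrasing of the paper's tracking of $\min Y_k$ and $\max Y_k$.
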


Our third main result regarding the characterization of exponential graphs applies to subgraph-counting functions whose weights are small in absolute value: If all $\beta$'s are small enough, the only solution to equation (\ref{eq:fixed_point_equation}) is the trivial one.

\begin{thm}[Small weights]
\label{thm:small_weights}
Let $N>3$ be an integer. Let $H_{1},\ldots,H_{\ell}$ be graphs, let
$\alpha\in\r$ and $\beta_{1},\ldots,\beta_{\ell}\in\r$ be real numbers
and let $f$ be a subgraph-counting function 
\[
f\left(X\right)=\alpha\inj\left(K_{2},X\right)+N\left(N-1\right)\sum_{i=1}^{\ell}\beta_{i}t\left(H_{i},X\right)
\]
where $K_{2}$ is the complete graph on 2 vertices. Denote $m_{i}=\abs{E\left(H_{i}\right)}$
and define the sum
\[
S_{\boldsymbol{\beta}}=\sum_{i=1}^{\ell}\abs{\beta_{i}}{m_{i} \choose 2}.
\]
If $S_{\boldsymbol{\beta}}<1$, then the constant solution $X_{c}$
obtained from item (\ref{enu:weak_existence}) in Lemma \ref{lem:simple_constant_solution_D_bound} is the only solution to the fixed point equation (\ref{eq:fixed_point_equation}). Further, any $X\in\mathcal{X}_{f}$ satisfies
\[
\norm{X-X_{c}}_{1}\leq\frac{5000C_{\boldsymbol{\beta}}^2}{1-S_{\boldsymbol{\beta}}}n^{15/16}.
\]
\end{thm}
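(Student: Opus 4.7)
The plan is to show that the map $T: [0,1]^n \to [0,1]^n$ defined by $T(X) = (\one + \tanh(\grad f(X)))/2$ is an $S_{\boldsymbol{\beta}}$-contraction on $[0,1]^n$ with respect to $\onenorm{\cdot}$. Granted this, the fixed point equation (\ref{eq:fixed_point_equation}) is exactly $X = T(X)$, and Banach's theorem combined with the constant fixed point $X_c$ provided by Lemma \ref{lem:simple_constant_solution_D_bound}(\ref{enu:weak_existence}) yields uniqueness. For the quantitative estimate, if $X \in \cal X_f$ then by (\ref{eq:main_theorem_inequality}) we have $\onenorm{X - T(X)} \leq 5000 C_{\boldsymbol{\beta}}^2 n^{15/16}$, and using $T(X_c) = X_c$ together with the triangle inequality and contraction,
\[
\onenorm{X - X_c} \leq \onenorm{X - T(X)} + \onenorm{T(X) - T(X_c)} \leq 5000 C_{\boldsymbol{\beta}}^2 n^{15/16} + S_{\boldsymbol{\beta}} \onenorm{X - X_c},
\]
which rearranges to the claimed bound.

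The main technical step is the contraction estimate. Since $\tanh$ is $1$-Lipschitz entrywise, it suffices to prove $\onenorm{\grad f(X) - \grad f(Y)} \leq 2 S_{\boldsymbol{\beta}} \onenorm{X-Y}$, with the extra factor of $2$ absorbed by the $\tfrac12$ in the definition of $T$. Using the polynomial representation (\ref{eq:how_to_calculate_f})--(\ref{eq:how_to_calculate_grad_f}), $f$ is multilinear in the symmetric variables $\{X_{ij}\}_{i<j}$, so I would pass to the ordinary second partials $\partial^2 f/(\partial X_{ij}\partial X_{kl})$ and integrate along the segment from $X$ to $Y$. The second partial $\partial^2 [N(N-1)t(H_r, X)]/(\partial X_{ij}\partial X_{kl})$ has a clean combinatorial expression: it sums over injective maps $\phi: V(H_r) \hookrightarrow [N]$ together with an ordered pair of distinct edges of $H_r$ mapped to $\{i,j\}$ and $\{k,l\}$, weighted by a product of $X$-entries each bounded by $1$. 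Summing over $\{k,l\}$ for fixed $\{i,j\}$ reduces to choosing the edge of $H_r$ mapped to $\{i,j\}$ (with $m_r$ options and two orientations), picking any of the $m_r - 1$ remaining edges for the free image, and extending $\phi$ over the other $v_{H_r} - 2$ vertices; after dividing by $(N-2)\cdots(N-v_{H_r}+1)$ and absorbing the two factors of $\tfrac12$ coming from the discrete derivative convention in Definition \ref{def:discrete_gradient}, the per-subgraph row sum is $\binom{m_r}{2}$, so summing over $r$ gives the desired bound $S_{\boldsymbol{\beta}}$.

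The main obstacle is precisely the sharp tracking of constants in this Hessian count. A loose version of the calculation above produces a spurious factor of $2$ or $4$, which would force a stronger hypothesis like $c\, S_{\boldsymbol{\beta}} < 1$ with $c > 1$ rather than the stated $S_{\boldsymbol{\beta}} < 1$. Obtaining the sharp constant requires care in three places: separating the subcases in which $\{i,j\}$ and $\{k,l\}$ share a vertex or are disjoint (the disjoint case contributes only through subgraphs $H_r$ containing two vertex-disjoint edges), consistently applying the $\tfrac12$-normalization of discrete derivatives in both partials, and verifying that the linear $\alpha\,\inj(K_2, X)$ term in $f$ is absorbed into the same count without contribution (since $K_2$ has $\binom{1}{2} = 0$ and vanishing Hessian). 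Once this bookkeeping is in place, the rest of the argument is routine.
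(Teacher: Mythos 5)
Your proposal is correct and follows essentially the same route as the paper: the paper also proves that $\Phi_f(X)=\left(\one+\tanh\left(\grad f\left(X\right)\right)\right)/2$ is an $S_{\boldsymbol{\beta}}$-contraction in $\onenorm{\cdot}$ (via its Lemma \ref{lem:subgraph_counting_can_contract}, $\onenorm{\grad f\left(X\right)-\grad f\left(Y\right)}\leq\sum_i\abs{\beta_i}m_i\left(m_i-1\right)\onenorm{X-Y}$), invokes Banach's fixed point theorem for uniqueness, and then uses exactly your triangle-inequality rearrangement for the quantitative bound. The only cosmetic difference is that the paper obtains the gradient Lipschitz estimate by a telescoping product inequality plus a direct count of how often each term $\abs{X_{\alpha\beta}-Y_{\alpha\beta}}$ appears, rather than your Hessian-and-integration formulation; the combinatorial content (ordered pairs of distinct edges of $H_r$, giving $m_r\left(m_r-1\right)$) is identical.
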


\begin{rem}
	
	One should compare Theorem \ref{thm:positive_uniqueness} and Theorem \ref{thm:small_weights} to Theorems 4.2 and 6.2 in \cite{chatterjee_diaconis_2013}, respectively. There, similar conditions (positive $\beta$'s or $S_{\boldsymbol{\beta}} < 1$) imply that the exponential random graph is close in the cut metric to a finite set of constant graphons.
\end{rem}

Finally, for the particular case of triangle-counts, it turns out that if $\beta<0$ is smaller than some universal
constant, there exists at least one non-trivial solution in the form of two blocks. 
\begin{thm}[Two block model]
\label{thm:two_block_model}
Let $N>3$ be an integer, let $\beta\in\r$,
and let $f\left(X\right)=\frac{\beta}{N-2}\inj\left(K_{3},X\right)$,
where $K_{3}$ is the triangle graph. There exists a $\beta_{0}<0$ such that if $\beta<\beta_{0}$, there is a solution to equation (\ref{eq:fixed_point_equation})
in the form of a block model with 2 communities. Specifically, the
$N$ vertices can be divided into two sets of equal size $U$ and
$W$, such that $X_{ij}=c_{1}$ if $\left(i,j\right)\in\left(U\times W\right)\union\left(W\times U\right)$,
and $X_{ij}=c_{2}$ if $\left(i,j\right)\in\left(U\times U\right)\union\left(W\times W\right)$
for $i\neq j$. Further, as $\beta\rightarrow-\infty$, $c_{1}\rightarrow\frac{1}{2}$
and $c_{2}\rightarrow0$.

\end{thm}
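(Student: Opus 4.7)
The plan is to exploit the two-block ansatz to reduce the matrix fixed point equation (\ref{eq:fixed_point_equation}) to a two-dimensional fixed point problem and then apply Brouwer's theorem; the asymptotics will follow by letting the invariant rectangle shrink as $\beta \to -\infty$.

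Specializing equation (\ref{eq:how_to_calculate_grad_f}) to $H = K_{3}$ one computes
\[
\partial_{ij} f(X) = \frac{3\beta}{N-2} \sum_{k \neq i, j} X_{ik} X_{jk}.
\]
Assume $N$ even and fix a partition $[N] = U \cup W$ with $|U| = |W| = N/2$. Substituting the ansatz $X_{ij} = c_1$ for $i, j$ in different blocks and $X_{ij} = c_2$ for $i \neq j$ in the same block, this sum evaluates uniformly to $(N-2) c_1 c_2$ on every cross-edge and to $(N/2 - 2) c_2^{2} + (N/2) c_1^{2}$ on every within-edge. Hence (\ref{eq:fixed_point_equation}) collapses to the two scalar equations
\begin{align*}
c_1 &= T_1(c_1, c_2) := \frac{1 + \tanh(3\beta c_1 c_2)}{2}, \\
c_2 &= T_2(c_1, c_2) := \frac{1}{2}\left(1 + \tanh\left(\frac{3\beta}{N-2}\bigl[(N/2-2) c_2^{2} + (N/2) c_1^{2}\bigr]\right)\right),
\end{align*}
and any fixed point of the continuous self-map $T = (T_1, T_2)$ of $[0,1]^{2}$ lifts, by the ansatz, to a block solution $X$ of (\ref{eq:fixed_point_equation}).

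Next, I would apply Brouwer's theorem on the compact convex rectangle $R = [1/2 - \varepsilon, 1/2] \times [0, \delta]$ with $\varepsilon, \delta > 0$ to be chosen. The bounds $T_1 \le 1/2$ and $T_2 \le 1/2$ are automatic since $\beta < 0$ and $c_1, c_2 \ge 0$, so invariance $T(R) \subseteq R$ reduces to two inequalities. The bound $T_1 \ge 1/2 - \varepsilon$ is equivalent to $3|\beta| c_1 c_2 \le \tanh^{-1}(2\varepsilon)$, which, since $c_1 c_2 \le \delta/2$ on $R$, holds whenever $\delta \le \tfrac{2}{3|\beta|}\tanh^{-1}(2\varepsilon)$. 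The bound $T_2 \le \delta$ is equivalent to $\tfrac{3|\beta|}{N-2}\bigl[(N/2-2)c_2^{2} + (N/2)c_1^{2}\bigr] \ge \tanh^{-1}(1 - 2\delta)$, and since the bracket is minimized on $R$ at $(c_1,c_2) = (1/2-\varepsilon, 0)$, it suffices that $\tfrac{3|\beta| N (1/2-\varepsilon)^{2}}{2(N-2)} \ge \tanh^{-1}(1 - 2\delta)$. For any fixed small $\varepsilon$, both inequalities can be simultaneously satisfied by choosing $\delta$ exponentially small in $|\beta|$ and then $|\beta|$ large enough; this defines the threshold $\beta_0 < 0$, and Brouwer produces $(c_1^{*}, c_2^{*}) \in R$.

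For the asymptotic claim, one repeats the argument with $\varepsilon = \varepsilon(\beta) \to 0$ as $\beta \to -\infty$ (e.g.\ $\varepsilon(\beta) = |\beta|^{-1/2}$) and a correspondingly shrinking $\delta(\beta) \to 0$; the resulting fixed points lie in $[1/2 - \varepsilon(\beta), 1/2] \times [0, \delta(\beta)]$, forcing $c_1^{*} \to 1/2$ and $c_2^{*} \to 0$. Conceptually the argument is routine once the block symmetry collapses the problem to a continuous self-map of a compact convex planar set; the only mildly delicate step, and the main technical obstacle, is the coupled quantitative calibration of $\varepsilon$, $\delta$, and $\beta_0$ ensuring invariance of $R$ under $T$.
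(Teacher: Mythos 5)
Your reduction to the two--dimensional system is essentially the same as the paper's (the paper writes the ansatz as $X=\alpha_{1}v_{1}v_{1}^{T}+\alpha_{2}v_{2}v_{2}^{T}-\mathrm{I}(\alpha_{1}+\alpha_{2})$ and then changes variables to $x=\alpha_1+\alpha_2=c_2$, $y=\alpha_1-\alpha_2=c_1$, arriving at exactly your pair of scalar equations, with $\tfrac{N-4}{2}=N/2-2$), but your existence argument is genuinely different. The paper solves each scalar equation implicitly for one variable, obtaining two curves $g$ and $h$, and forces a nontrivial intersection by an intermediate--value argument that requires fairly delicate control of the derivatives $g'$ and $h'$ (this is where most of the technical work in the appendix lives). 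You instead exhibit an invariant rectangle $R=[1/2-\varepsilon,1/2]\times[0,\delta]$ for the map $T$ and invoke Brouwer; your verification of $T(R)\subseteq R$ is correct, the resulting fixed point has $c_1\geq 1/2-\varepsilon>\delta\geq c_2$ so it is genuinely non-constant, and shrinking $\varepsilon(\beta),\delta(\beta)\to 0$ gives the asymptotics. This buys a shorter and more robust proof that avoids the curve analysis entirely; what it loses is the finer localization the paper's argument provides (e.g.\ that the nontrivial solution appears already around $\beta\approx -22/3$ numerically, and the explicit window $x^*\in[x_1,2/\beta^2]$). One small caution on your calibration: the window for $\delta$ is roughly $e^{-3|\beta|(1/2-\varepsilon)^2 N/(N-2)}\lesssim\delta\lesssim\tanh^{-1}(2\varepsilon)/|\beta|$, so ``exponentially small'' must be taken with a sufficiently mild exponent (e.g.\ $\delta=e^{-|\beta|/2}$ works for small $\varepsilon$ but $\delta=e^{-|\beta|}$ does not); a polynomial choice such as $\delta=|\beta|^{-2}$ sidesteps the issue and satisfies both constraints for large $|\beta|$.
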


\begin{rem}[A remark on bounds and sparsity]	
 When considering subgraph counting functions, it is useful to think
of the special case that the $\beta_{i}$'s are constants independent
of $N$. In this case, the typical exponential graph will be dense,
and inequalities involving the one-norm of matrices will yield meaningful
information. However, letting the $\beta_{i}$'s depend explicitly on $N$ can lead to sparse graphs. The sparse case is typically harder analyze than the dense case, although there are some exact results in this regime (see e.g., \cite{yin_zhu_2017} where the partition function and two-edge correlations are derived for certain families of $\beta$'s). \\
Our theorems still hold true in the sparse regime, but for graphs which are too sparse they may only be trivially true. Consider
Theorem \ref{thm:positive_uniqueness} as an example. If the weights
$\beta_{i}$ are such that the expected number of edges in the exponential
graph is smaller than the error term $\inf_{\lambda\in\left(0,1\right)}\lambda n+1000C_{\boldsymbol{\beta}}^3\lambda^{\frac{\log C_{\boldsymbol{\beta}}}{\log D_{\boldsymbol{\beta}}}}n^{15/16}$,
then the weight matrix is trivially close to a constant matrix: Namely,
the zero matrix. In this case the theorem tells us nothing new. The
next example demonstrates that this is not always the case, and our
results can give meaningful information in the sparse regime.
\end{rem}

\begin{example}
\label{exa:sprasity_example}In this informal example, we give a sketch
for the case of triangle counts. Let $f$ be the function
\[
f\left(X\right)=\alpha\inj\left(K_{2},X\right)+\frac{\beta}{N-2}\inj\left(K_{3},X\right)
\]
where $\alpha=\frac{1}{2}\log\frac{p}{1-p}$ and $\frac{1}{200}\abs{\alpha} \leq \beta\leq\frac{1}{100}\abs{\alpha}$.
We will take $p=p\left(N\right)=n^{-c}$ for some $c>0$. This implies
that $\alpha\propto-\log N$ and $\beta\propto\log N$; thus $\alpha\to-\infty$
and $\beta\goinf$ as $N\goinf$. We expect the typical number of
edges in the resulting exponential graph to be $\Omega\left(np\right)$. 

It can be verified that for large enough $N$, there is only a single
solution to the equation $x=\varphi_{\boldsymbol{\beta}}\left(x\right)$;
denote it by $x_{0}$. Our first task is to calculate $D_{\boldsymbol{\beta}}$.
By its definition, it is always smaller than the maximum of the derivative
of $\varphi_{\boldsymbol{\beta}}\left(x\right)=\frac{1+\tanh\left(\alpha+3\beta x^{2}\right)}{2}$.
Thus, neglecting constants, 
\begin{align*}
D_{\boldsymbol{\beta}}\leq\max_{x\in\left[0,1\right]}\abs{\varphi'\left(x\right)} = & \max_{x\in\left[0,1\right]}\frac{3\beta x^{2}}{\cosh^{2}\left(\alpha+3\beta x^{2}\right)} \\
& \leq \frac{3\beta}{\cosh^{2}\left(\frac{1}{2}\alpha\right)} \\
& \apprle \abs{\alpha}e^{\alpha}.
\end{align*}
Hence for all $N$ large enough, we have $D_{\boldsymbol{\beta}}<1$,
and can apply Theorem \ref{thm:positive_uniqueness}: For any $X\in\cal X_{f}$,
we have
\[
\onenorm{X-x_{0}\one}\leq\lambda n+10000C_{\boldsymbol{\beta}}^3\lambda^{\frac{\log C_{\boldsymbol{\beta}}}{\log D_{\boldsymbol{\beta}}}}n^{15/16}.
\]
Now, since $D_{\boldsymbol{\beta}} \leq \abs{\alpha}e^\alpha < 1$, we have that $\abs{\log D_{\boldsymbol{\beta}}}\gtrsim \abs{\alpha}$, while $C_{\boldsymbol{\beta}}\approx\abs{\alpha}$,
so $\log C_{\beta}\approx\log\abs{\alpha}$; this gives
\[
\abs{\frac{\log C_{\boldsymbol{\beta}}}{\log D_{\boldsymbol{\beta}}}}\lesssim\frac{\log\abs{\log p}}{\abs{\log p}}\approx\frac{\log\log n}{c\log n}.
\]
Set $\lambda=n^{-1}=e^{-\log n}$. Then
\[
\lambda^{\frac{\log C_{\boldsymbol{\beta}}}{\log D}}\approx e^{-\log n\frac{\log\log n}{-c\log n}}=e^{c'\log\log n}=\left(\log n\right)^{c'}.
\]
Since we want the error term to be smaller than the number of edges,
then ignoring all logarithmic terms (i.e those coming from $\lambda$ and $C_{\boldsymbol{\beta}}$), we require the following inequality to hold:
\[
np\apprge n^{15/16}.
\]
This indeed allows a polynomial dependence between $p$ and $n$.
For any $p$ satisfying
\[
p\apprge n^{-1/16},
\]
we conclude that there exists a constant $p'$ and a coupling between
$G(n,p')$ and $G_{N}^{f}$ such that
\[
\e\|G(n,p')-G_{N}^{f}\|_{1}=o(np).
\]
\end{example}

\subsection{Open questions and further directions}
\begin{itemize}
\item Theorems \ref{thm:positive_uniqueness} and \ref{thm:small_weights}
show that in some cases, the random graphs in the mixture are close
to an actual fixed point of equation (\ref{eq:fixed_point_equation}).
It is natural to ask whether this is a general phenomenon. Let $X\in\cal X_{f}$
and denote by $S=\left\{ Y:Y=\left(\one+\tanh\left(\grad f\left(Y\right)\right)\right)/2\right\} $
the set of solutions to the fixed point equation (\ref{eq:fixed_point_equation}).
Is it true that 
\[
\inf_{Y\in S}\onenorm{X-Y}=o\left(n\right)?
\]
In other words, is it true that approximately-fixed points are approximately
fixed-points?
\item How quickly can the the parameter $\delta$ in Theorem \ref{thm:general_block_theorem}
approach $0$ while still keeping a meaningful bound? Can the theorem
be improved to obtain a polynomial dependence on $N$?
\item Can Theorem \ref{thm:general_block_theorem} be formulated in a meaningful
way for sparse exponential random graphs?
\item Lubetzky and Zhao proposed in \cite{lubetzky_zhao_dense} a variant
of subgraph-counting functions where the Hamiltonian is of the form
\[
f\left(G\right)=N\left(N-1\right)\left(\sum_{i=1}^{\ell}\beta_{i}t\left(H_{i},G\right)^{\alpha_{i}}\right)
\]
for some $\alpha_{1},\ldots\alpha_{\ell}>0$. Theorem 14 in \cite{eldan_gross_gibbs_distribution}
implies that this modified Hamiltonian also breaks up into a mixture
of product measures. What are the components of this mixture? Is there
a criterion on the exponents $\alpha_{i}$ that enables / ensures
symmetry-breaking?
\item The fixed point equation $X=\left(\one+\tanh\left(\grad f\left(X\right)\right)\right)/2$
corresponds to the critical points of a variational problem. Classify
these critical points; is it true that they are all maxima? If not,
how does the mass of $\rho$ distribute among the different types?
In particular, is the mass always distributed on \emph{global} maxima?
\item Show that for the case of triangle counts, every solution to the exact
fixed point equation $X=\left(\one+\tanh\left(\frac{\beta}{N-2}X^{2}\right)\right)/2$
is close to a stochastic block model with two communities. In other
words, show an ``only if'' condition for Theorem \ref{thm:two_block_model}.
\end{itemize}

\subsection*{Organization}

The rest of this paper is organized as follows. The proof of Theorem
\ref{thm:main_theorem} is given in \secref{proof_of_main_theorem}.
In \secref{general_block_model}, we prove the block model Theorem
\ref{thm:general_block_theorem}; we first show the proof for triangle-counting
functions, and then generalize it to arbitrary counting functions.
Finally, \secref{existence_and_positive_weghts}, \secref{small_weights} and 
\secref{two_block_model} are devoted to proving the existence and uniqueness of solutions of
the fixed point equation in some special cases, as described in Theorems
\ref{thm:positive_uniqueness}, \ref{thm:small_weights} and \ref{thm:two_block_model}.

\section{Proof of the mixture decomposition\label{sec:proof_of_main_theorem}}

The proof of Theorem \ref{thm:main_theorem} will follow as a corollary
from the main result of \cite{eldan_gross_gibbs_distribution}. In
order to formulate this result, we need the following definition.
\begin{defn}[Gaussian width, gradient complexity]
The \emph{Gaussian-width} of a set $K\subseteq\r^{n}$ is defined
as
\[
\boldsymbol{\mathrm{GW}}\left(K\right)=\e\left[\sup_{X\in K}\left\langle X,\Gamma\right\rangle \right]
\]
where $\Gamma\sim N\left(0,\textrm{Id}\right)$ is a standard Gaussian
vector in $\r^{n}$. For a function $f:\cal C_{n}\to\r$, the \emph{gradient
complexity} of $f$ is defined as
\[
\cal D\left(f\right)=\boldsymbol{\mathrm{GW}}\left(\left\{ \grad f\left(Y\right):Y\in\cal C_{n}\right\} \union\left\{ 0\right\} \right).
\]
\end{defn}

The main result of \cite{eldan_gross_gibbs_distribution} reads:
\begin{thm}[Theorem 9 in \cite{eldan_gross_gibbs_distribution} ]
\label{thm:main_gibbs_theorem}Let $n>0$, let $f:\cal C_{n}\to\r$, and let $X_{n}^{f}$ be a random vector given by the law
$$
\pr{X_{n}^{f} = X} = \exp(f(X)) / Z,
$$
where $Z$ is a normalizing constant. Denote
\begin{align*}
D & =\cal D\left(f\right)\\
L_{1} & =\max\left\{ 1,\text{Lip}\left(f\right)\right\} \\
L_{2} & =\max\left\{ 1,\max_{X\neq Y\in\cal C_{n}}\frac{\onenorm{\grad f\left(X\right)-\grad f\left(Y\right)}}{\onenorm{X-Y}}\right\} .
\end{align*}
Denote by $\cal X_{f}$ the set 
\[
\cal X_{f}=\left\{ X\in\overline{C_{n}}:\onenorm{X-\frac{\one+\tanh\left(\grad f\left(X\right)\right)}{2}}\leq5000L_{1}L_{2}^{3/4}D^{1/4}n^{3/4}\right\} 
\]
where $\one$ is the $N\times N$ matrix with zero on the diagonal
and whose off-diagonal entries are $1$, $\grad f\left(X\right)$
is extrapolated to $\overline{\cal C_{n}}$ by equation (\ref{eq:how_to_calculate_grad_f})
and with the $\tanh$ applied entrywise to the entries of $\grad f\left(X\right)$.
Then $X_{n}^{f}$ is a $\left(\rho,80\frac{D^{1/4}}{n^{1/4}}\right)$-mixture
such that 
\[
\rho\left(\cal X_{f}\right)\geq1-80\frac{D^{1/4}}{n^{1/4}}.
\]
\end{thm}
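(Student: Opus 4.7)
The plan is to prove this via an Eldan-style stochastic localization argument adapted to the Boolean cube. The first step is to construct a continuous-time family of tilted measures $\mu_t := \tau_{\theta_t} \mu$, where $\mu \propto e^f$ is the target Gibbs measure and $\theta_t \in \r^n$ is a martingale driven by Brownian motion. The SDE for $\theta_t$ should be chosen so that (i) $\e \mu_t = \mu$ for every $t$, so $\mu$ remains a mixture of the $\mu_t$'s at all times; (ii) the conditional mean $a_t := \e_{\mu_t}[X]$ is a martingale whose quadratic variation is controlled by the conditional covariance $\mathrm{Cov}(\mu_t)$; and (iii) $\mu_t$ concentrates as $t\rightarrow\infty$. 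Taking $\rho$ to be the pushforward of the law of $a_T$ at an appropriate stopping time $T$ will realize $\mu$ as the asserted mixture; it then remains to show that $\mu_T$ is close in $\onenorm{\cdot}$ to the product measure $\pi_{a_T}$ and that $a_T$ approximately solves the fixed-point equation $x = (\one + \tanh(\grad f(x)))/2$.

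The second step is to bound the integrated ``variance budget''. An It\^o computation for $\mathrm{Ent}(\mu_t)$ or for $|a_t|^2$, combined with the trivial bound $\mathrm{Ent}(\mu) \leq n \log 2$, gives an inequality of the form $\e \int_0^T \norm{\mathrm{Cov}(\mu_t)}\, dt \lesssim n$ in a suitable matrix norm. A Markov-style argument then produces a stopping time $T$ at which $\mathrm{Cov}(\mu_T)$ is small with high probability, while the displacement of $a_T$ away from $a_0$ (and hence the mixture loss) stays controlled.

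The heart of the matter is passing from small covariance to the fixed-point equation. A ``low covariance forces factorization'' lemma, proved via a second-order Taylor expansion of $f$ around $a_T$, shows that $\mu_T$ is close in total variation to $\pi_{a_T}$ with error controlled by the Lipschitz constant $L_2$ of $\grad f$. For $\pi_{a_T}$ itself, the Gibbs identity $a_{T,i} = \e_{\pi_{a_T}}[(1 + \tanh(\partial_i f(X)))/2]$ reduces to the fixed-point equation up to an error driven by the fluctuations of $\grad f(X)$ around $\grad f(a_T)$ under $\pi_{a_T}$. These fluctuations, measured in $\onenorm{\cdot}$ and summed over coordinates, are governed precisely by the Gaussian width $\cal D(f) = D$ via a generic-chaining estimate on the random field $X \mapsto \langle \grad f(X), v\rangle$, which is exactly why $D$ enters the final bound.

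The main obstacle will be calibrating the stopping time and matrix norms to produce the specific exponents $L_1 L_2^{3/4} D^{1/4} n^{3/4}$ and $D^{1/4}/n^{1/4}$. Optimizing $T$ must simultaneously balance three competing quantities: the product-measure approximation error (decreasing in $T$), the spread of $a_T$ around its starting value that determines the mixture approximation loss (growing in $T$), and the Markov tail on the bad event that $\mathrm{Cov}(\mu_T)$ is large. Obtaining the stated exponents requires a sharp form of the factorization lemma with explicit dependence on $L_1$, $L_2$, and $D$ rather than crude bounds in terms of the overall oscillation of $f$; this is the bookkeeping-heavy piece where the Gaussian width, rather than a cruder Lipschitz-based estimate, is indispensable.
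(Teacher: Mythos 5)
First, a structural point: this statement is not proved in the present paper at all. It is quoted verbatim as Theorem 9 of \cite{eldan_gross_gibbs_distribution} and used as a black box; the paper's only work here is to verify its hypotheses (bounds on $\text{Lip}(f)$, the gradient's Lipschitz constant, and $\cal D\left(f\right)$) for subgraph-counting functions. So there is no in-paper proof to compare against, and your outline has to be judged against the argument of the cited work. At that level, you have correctly identified the skeleton: a stochastic-localization/tilting process $\mu_t=\tau_{\theta_t}\nu$ preserving the mixture property, an integrated-covariance budget of order $n$, a stopping time at which the measure is nearly product, and an identification of the conditional means with approximate solutions of $X=\left(\one+\tanh\left(\grad f\left(X\right)\right)\right)/2$.

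As a proof, however, every load-bearing step is asserted rather than established, and one mechanism is misattributed. (1) You never write down the SDE for $\theta_t$ or verify properties (i)--(iii); that construction, together with the fact that $\rho$ ends up supported on small tilts (as the paper's own remark after Theorem \ref{thm:main_theorem} records), is the entire content. (2) The Gaussian width $D=\cal D\left(f\right)$ does not enter through a generic-chaining bound on the fluctuations of $\grad f\left(X\right)$ around $\grad f\left(a_T\right)$ under the product measure $\pi_{a_T}$, as you propose. Because $\mu_T$ is a tilt, its mean satisfies $a_i\approx\left(1+\tanh\left(\partial_i f\left(a\right)+\theta_{T,i}\right)\right)/2$ exactly, and the error in dropping $\theta_T$ is controlled by $\onenorm{\theta_T}\lesssim n\sqrt{T}$; the Gaussian width appears because $\theta_T$ is approximately a Gaussian vector of per-coordinate variance $T$, so its interaction with the set $\left\{ \grad f\left(Y\right)\right\}$ — which governs the entropy/free-energy defect and hence the admissible running time — is exactly $\sqrt{T}\,\mathrm{GW}\left(\grad f\left(\cal C_{n}\right)\right)=\sqrt{T}D$. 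Balancing these is what yields $D^{1/4}n^{3/4}$ and $D^{1/4}n^{-1/4}$; your chaining estimate would not obviously produce these exponents and is not the step where $D$ is needed. (3) The ``small covariance forces factorization'' lemma and the entire quantitative calibration producing $5000L_{1}L_{2}^{3/4}D^{1/4}n^{3/4}$ and $80D^{1/4}n^{-1/4}$ are deferred, but those constants \emph{are} the statement. In short: right strategy, essentially no proof; if the intent is to cite the result rather than reprove it, as this paper does, that should be stated explicitly.
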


We will prove Theorem \ref{thm:main_theorem} by applying the above
theorem; this requires giving bounds on $\cal D\left(f\right)$, $\text{Lip}\left(f\right)$
and $\max\frac{\onenorm{\grad f\left(x\right)-\grad f\left(y\right)}}{\onenorm{x-y}}$.
We bound the latter two quantities in the following three lemmas. 

For a vector $X\in\cal C_{n}$, denote by $X_{j}^{+}$ the vector
$X_{j}^{+}=\left(X_{1},X_{2},\ldots,X_{j-1},1,X_{j+1},\ldots,X_{n}\right)$,
and by $X_{j}^{-}$ the vector $X_{j}^{-}=\left(X_{1},X_{2},\ldots,X_{j-1},0,X_{j+1},\ldots,X_{n}\right)$.
In terms of graphs, $X_{j}^{+}$ is the graph $X$ with the edge at
index $j$ added (if it is not already there), while $X_{j}^{-}$
is the graph $X$ with the edge at index $j$ removed.

The first lemma states that such subgraph-counting functions have
bounded Lipschitz constants. 
\begin{lem}
\label{lem:lipschitz_of_subgraph_hom}Let $f$ be a subgraph-counting
function of the form (\ref{eq:definition_of_subgraph_counting}).
Then for every $X\in\cal C_{n}$ and for every index $j$, $\abs{\partial_{j}f\left(X\right)}\leq\sum_{i=1}^{\ell}\abs{\beta_{i}}\abs{E\left(H_{i}\right)}.$
In other words, $f$ is $\sum_{i=1}^{\ell}\abs{\beta_{i}}\abs{E\left(H_{i}\right)}$-Lipschitz. 
\end{lem}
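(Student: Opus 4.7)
The plan is to deduce the bound directly from the explicit polynomial formula for the discrete derivative already derived in equation (\ref{eq:how_to_calculate_grad_f}) of the background section. Because the Hamiltonian decomposes as $f=\sum_{i=1}^{\ell}\beta_{i}N(N-1)t(H_{i},\cdot)$ and the discrete derivative $\partial_{j}$ is linear, the triangle inequality reduces the problem to bounding $|\partial_{j} g|$ where $g(X)=\beta N(N-1)t(H,X)$ for a single finite simple graph $H$ on $m$ vertices with edge set $E(H)$; the desired bound is then obtained by summing the resulting single-graph bound $|\beta||E(H)|$ across $i=1,\ldots,\ell$.

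For a single subgraph $H$, I would take the index $j$ to correspond to an unordered edge $\{i,i'\}$ and read off from equation (\ref{eq:how_to_calculate_grad_f}) that
\[
\partial_{ii'} g(X)=\frac{\beta}{(N-2)(N-3)\cdots(N-m+1)}\sum_{(a,b)\in E(H)}\sum_{\substack{q\in[N]^{m}\text{ distinct}\\ q_{a}=i,\,q_{b}=i'}}\prod_{\substack{(l,l')\in E(H)\\ \{l,l'\}\neq\{a,b\}}}X_{q_{l},q_{l'}}.
\]
Since the entries of $X$ lie in $[0,1]$, every product in the innermost sum is at most $1$. I would then count terms: the outer sum contributes $|E(H)|$ summands, while for each fixed $(a,b)$, the inner sum ranges over injective maps $q:[m]\to[N]$ with two coordinates pinned to $i$ and $i'$, leaving $m-2$ coordinates that must take distinct values in $[N]\setminus\{i,i'\}$. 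The number of such maps is exactly $(N-2)(N-3)\cdots(N-m+1)$, which is precisely the denominator appearing in front. The cancellation yields $|\partial_{ii'}g(X)|\leq|\beta|\,|E(H)|$.

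Combining via the triangle inequality gives $|\partial_{j}f(X)|\leq\sum_{i=1}^{\ell}|\beta_{i}||E(H_{i})|$ for every coordinate $j$ and every $X\in\mathcal{C}_{n}$. Taking the maximum over $j$ and $X$ yields the Lipschitz estimate as stated. There is no real obstacle here: the derivation rests entirely on the fact that each variable $X_{q_{l},q_{l'}}$ appears with degree at most $1$ in the polynomial expansion of $g$ (thanks to the distinctness constraint on $q$), which is exactly what makes the counting exact and the normalization cancel. The only minor care needed is in matching the combinatorial bookkeeping between unordered edges in $E(H)$ and the ordered pairs used inside equation (\ref{eq:how_to_calculate_grad_f}), but any resulting factor of two is already absorbed into the convention adopted in that formula.
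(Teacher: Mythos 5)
Your proof is correct and takes essentially the same route as the paper: the paper bounds $\inj(H,X_j^+)-\inj(H,X_j^-)$ by enumerating injective maps that send an edge of $H$ onto $e_j$, which is exactly the term count you perform inside equation (\ref{eq:how_to_calculate_grad_f}), and in both arguments the count $(N-2)\cdots(N-m+1)$ of pinned injective maps cancels the normalizing denominator to give $\abs{\beta}\abs{E(H)}$ per subgraph. Your closing remark about the ordered-versus-unordered edge convention correctly identifies the only bookkeeping issue, and the factor of two it introduces is indeed absorbed either way.
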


\begin{proof}
By definition, for any graph $H$, 
\begin{align*}
\partial_{j}\inj\left(H,X\right) & =\frac{\inj\left(H,X_{j}^{+}\right)-\inj\left(H,X_{j}^{-}\right)}{2}.
\end{align*}
The graphs $X_{j}^{+}$ and $X_{j}^{-}$ differ by only one edge,
which we call $e$. Now look at $\inj\left(H,X_{j}^{+}\right)-\inj\left(H,X_{j}^{-}\right).$
All homomorphisms which do not send at least one edge of $H$ into
the edge $e$ cancel out in this sum. Hence it is equal to 
\[
\#\left\{ \phi\in\Inj\left(H,X_{j}^{+}\right):e\in E\left(\phi\left(H\right)\right)\right\} .
\]
To bound the number of such homomorphisms, we construct them as follows:
first map one of the edges of $H$ to the edge $e$, and then injectively
map the remaining vertices of $H$ to vertices of $G$. There are
$2\abs{E\left(H\right)}$ ways to do the former and $\left(N-2\right)\left(N-3\right)\ldots\left(N-m+1\right)$
ways to do the latter, so overall:
\begin{equation}
\partial_{j}\inj\left(H,X\right)=\frac{\inj\left(H,X_{j}^{+}\right)-\inj\left(H,X_{j}^{-}\right)}{2}\leq\abs{E\left(H\right)}\left(N-2\right)\left(N-3\right)\ldots\left(N-m+1\right).\label{eq:homm_lipschitz}
\end{equation}
This means that 
\begin{align*}
\abs{\partial_{j}f\left(X\right)} & =\abs{\partial_{i}N\left(N-1\right)\sum_{i=1}^{\ell}\frac{\beta_{i}\inj\left(H_{i},X\right)}{N\left(N-1\right)\ldots\left(N-m+1\right)}}\\
\left(\text{triangle ineq}.\right) & \leq\sum_{i=1}^{\ell}\abs{\beta_{i}}\abs{\partial_{i}\frac{\inj\left(H_{i},X\right)}{\left(N-2\right)\ldots\left(N-m+1\right)}}\\
\left(\text{by\,(\ref{eq:homm_lipschitz})}\right) & \leq\sum_{i=1}^{\ell}\abs{\beta_{i}}\abs{E\left(H_{i}\right)}
\end{align*}
as needed.
\end{proof}
The second lemma tells us that that if $X$ and $Y$ differ by only
one index, then $\grad f\left(X\right)$ and $\grad f\left(Y\right)$
are close to each other.
\begin{lem}
\label{lem:similar_vectors_intersect_or_not}Let $f$ be a subgraph-counting
function. Let $X,Y\in\cal C_{n}$ be two vectors that differ only
in a single index $k$. Let $j$ be an index, $e_{j}$ be the edge
that corresponds to index $j$, and $e_{k}$ be the edge that corresponds
to index $k$. If $e_{j}$ and $e_{k}$ share a common vertex, then
\[
\abs{\partial_{j}f\left(X\right)-\partial_{j}f\left(Y\right)}\leq\sum_{i=1}^{\ell}\frac{2\abs{\beta_{i}}\abs{E\left(H_{i}\right)}^{2}}{\sqrt{n}}.
\]
If $e_{j}$ and $e_{k}$ do not share a common vertex, then 
\[
\abs{\partial_{j}f\left(X\right)-\partial_{j}f\left(Y\right)}\leq\sum_{i=1}^{\ell}\frac{6\abs{\beta_{i}}\abs{E\left(H_{i}\right)}^{2}}{n}.
\]
\end{lem}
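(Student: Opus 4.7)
The strategy is to expand $\partial_j f$ using formula (\ref{eq:how_to_calculate_grad_f}) as a polynomial in the entries of the matrix, and then observe that since $X$ and $Y$ agree everywhere except at coordinate $k$, the difference $\partial_j f(X) - \partial_j f(Y)$ receives contributions only from those terms in the polynomial expansion whose product $\prod_{\{l,l'\} \neq \{a,b\}} X_{q_l, q_{l'}}$ contains the coordinate $X_k$ as one of its factors. Equivalently, for each subgraph $H_i$ appearing in $f$, I need to count injective homomorphisms $\phi : V(H_i) \to [N]$ such that both $e_j$ and $e_k$ belong to $\phi(E(H_i))$, and then divide by the normalizing factor $(N-2)(N-3)\cdots(N-m_i+1)$ coming from (\ref{eq:how_to_calculate_grad_f}).

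To produce such a homomorphism, first select an ordered edge $(a,b) \in E(H_i)$ to send to $e_j$ (at most $2|E(H_i)|$ choices, accounting for the two orientations) and a second ordered edge $(a',b') \in E(H_i)$, distinct from $\{a,b\}$, to send to $e_k$ (at most $2|E(H_i)|$ choices). The remaining vertices of $H_i$ must then be mapped injectively into $[N]$ avoiding the vertices already used by $\phi$. The number of such completions depends on how many vertices of $[N]$ are pinned down by the first two choices, which in turn depends on whether $e_j$ and $e_k$ share a vertex.

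In the shared-vertex case, $e_j \cup e_k$ covers exactly three vertices in $[N]$, so injectivity of $\phi$ forces $\{a,b\}$ and $\{a',b'\}$ to share exactly one vertex in $V(H_i)$, pinning down three vertices of $H_i$; the remaining $m_i - 3$ vertices admit at most $(N-3)(N-4)\cdots(N-m_i+1)$ injective assignments. After dividing by $(N-2)(N-3)\cdots(N-m_i+1)$ and summing over $i$, the bound collapses to a quantity of order $\sum_{i} |\beta_i||E(H_i)|^2/(N-2)$, which, using $N-2 \geq \sqrt{n}$ for $N \geq 3$, yields the claimed $\sqrt{n}$-type bound. In the disjoint-vertex case, the four endpoints of $e_j, e_k$ are distinct, so $(a,b)$ and $(a',b')$ must be vertex-disjoint in $H_i$, pinning four vertices; we gain an extra factor of $1/(N-3)$ after normalization, producing the $1/n$-type bound.

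The only real obstacle is bookkeeping: keeping the orientation factors of $2$ under control and verifying that the combinatorial counts produce the advertised constants $2$ and $6$ rather than slightly worse numbers. This boils down to observing that ordered pairs of edges of $H_i$ sharing a vertex are counted at most $4|E(H_i)|(|E(H_i)|-1)$ times and then absorbing the small slack into the factors $|E(H_i)|^2$. No new idea is required beyond the polynomial-difference argument and the two case-specific vertex counts.
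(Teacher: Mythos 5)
Your proposal is correct and follows essentially the same route as the paper: the difference $\partial_{j}f\left(X\right)-\partial_{j}f\left(Y\right)$ is reduced to counting injective homomorphisms sending edges of $H_{i}$ onto both $e_{j}$ and $e_{k}$, with the same case split on whether $e_{j}$ and $e_{k}$ share a vertex (three versus four pinned vertices of $\left[N\right]$), the same $\left(2\abs{E\left(H_{i}\right)}\right)^{2}/2$ orientation count, and the same normalization yielding the $1/\left(N-2\right)$ versus $1/\left(\left(N-2\right)\left(N-3\right)\right)$ bounds. The only caveat, shared with the paper's own write-up, is that the final step $N-2\geq\sqrt{n}$ holds only for $N\geq6$, but this affects nothing asymptotically.
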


\begin{proof}
Assume without loss of generality that $X_{k}=1$ while $Y_{k}=0$.
This means that $X$ contains the edge $e_{k}$ while $Y$ does not.
Then for every graph $H$, 
\begin{align*}
\partial_{j}\inj\left(H,X\right)-\partial_{j}\inj\left(H,Y\right) & =\frac{\inj\left(H,X_{j}^{+}\right)-\inj\left(H,X_{j}^{-}\right)}{2}-\frac{\inj\left(H,Y_{j}^{+}\right)-\inj\left(H,Y_{j}^{-}\right)}{2}.
\end{align*}
We can assume that $j\neq k$: If they were equal, then $X_{j}^{+}$
and $X_{j}^{-}$ would be equal to $Y_{j}^{+}$ and $Y_{j}^{-}$,
respectively, and the difference $\partial_{j}\inj\left(H,X\right)-\partial_{j}\inj\left(H,Y\right)$
would just be $0$. 

Similar to the proof of Lemma \ref{lem:lipschitz_of_subgraph_hom},
the first term $\inj\left(H,X_{j}^{+}\right)-\inj\left(H,X_{j}^{-}\right)$
counts the number of homomorphisms from $H$ to $X$ that map an edge
of $H$ into the edge $e_{j}$, while the second term $\inj\left(H,Y_{j}^{+}\right)-\inj\left(H,Y_{j}^{-}\right)$
counts the number of homomorphisms from $H$ to $Y$ that map an edge
of $H$ into the edge $e_{j}$. However, the homomorphisms in the
first term may map edges from $H$ into the edge $e_{k}$, while those
of the second term may not, since $e_{k}$ does not exist in $Y$.
Thus, their difference is equal to:
\[
\partial_{j}\inj\left(H,X\right)-\partial_{j}\inj\left(H,Y\right)=\frac{\#\left\{ \phi\in\Inj\left(H,X_{j}^{+}\right):\left\{ e_{j},e_{k}\right\} \subseteq E\left(\phi\left(H\right)\right)\right\} }{2}.
\]
To bound the number of such homomorphisms, we construct them as follows:
first map two of the edges of $H$ to the edges $e_{j}$ and $e_{k}$,
and then injectively map the remaining vertices of $H$ to vertices
of $G$. There are less than $\left(2\abs{E\left(H\right)}\right)^{2}$
ways to do the former. For the latter, it depends on whether $e_{j}$
and $e_{k}$ have a vertex in common. If they do not, then the edges
in $H$ mapping to $e_{j}$ and $e_{k}$ must also be disjoint, and
mapping them involves choosing $4$ vertices to map to the vertices
of $e_{j}$ and $e_{k}$. This gives $\left(N-4\right)\ldots\left(N-m+1\right)$
ways to map the remaining vertices of $H$. If $e_{j}$ and $e_{k}$
do have a vertex in common, then it is possible to map the corresponding
edges of $H$ by mapping only $3$ vertices to the vertices of $e_{j}$
and $e_{k}$. This gives $\left(N-3\right)\ldots\left(N-m+1\right)$
ways to map the remaining vertices of $H$.

So overall, we get that 
\begin{align*}
e_{j}\intersect e_{k} & =\emptyset\implies\partial_{j}\inj\left(H,X\right)-\partial_{j}\inj\left(H,Y\right)\leq2\abs{E\left(H\right)}^{2}\left(N-4\right)\ldots\left(N-m+1\right),\\
e_{j}\intersect e_{k} & \neq\emptyset\implies\partial_{j}\inj\left(H,X\right)-\partial_{j}\inj\left(H,Y\right)\leq2\abs{E\left(H\right)}^{2}\left(N-3\right)\ldots\left(N-m+1\right).
\end{align*}
This means that for $e_{j}\intersect e_{k}=\emptyset$, we get
\begin{align*}
\abs{\partial_{j}f\left(X\right)-\partial_{j}f\left(Y\right)} & =\abs{\partial_{j}N\left(N-1\right)\sum_{i=1}^{\ell}\beta_{i}\frac{\inj\left(H_{i},X\right)}{N\ldots\left(N-m+1\right)}-\partial_{j}N\left(N-1\right)\sum_{i=1}^{\ell}\beta_{i}\frac{\inj\left(H_{i},Y\right)}{N\ldots\left(N-m+1\right)}}\\
\left(\text{triangle ineq.}\right) & \leq\sum_{i=1}^{\ell}\frac{\abs{\beta_{i}}}{\left(N-2\right)\ldots\left(N-m+1\right)}\abs{\partial_{j}\inj\left(H_{i},X\right)-\partial_{j}\inj\left(H_{i},Y\right)}\\
 & \leq\sum_{i=1}^{\ell}\frac{\abs{\beta_{i}}}{\left(N-2\right)\ldots\left(N-m+1\right)}\left(2\abs{E\left(H_{i}\right)}^{2}\left(N-4\right)\ldots\left(N-m+1\right)\right)\\
 & =\sum_{i=1}^{\ell}\frac{2\abs{\beta_{i}}\abs{E\left(H_{i}\right)}^{2}}{\left(N-2\right)\left(N-3\right)}\leq\sum_{i=1}^{\ell}\frac{6\abs{\beta_{i}}\abs{E\left(H_{i}\right)}^{2}}{n},
\end{align*}
while for $e_{j}\intersect e_{k}\neq\emptyset$, we get 
\begin{align*}
\abs{\partial_{j}f\left(X\right)-\partial_{j}f\left(Y\right)} & \leq\sum_{i=1}^{\ell}\frac{\abs{\beta_{i}}}{\left(N-2\right)\ldots\left(N-m+1\right)}\left(2\abs{E\left(H_{i}\right)}^{2}\left(N-3\right)\ldots\left(N-m+1\right)\right)\\
 & =\sum_{i=1}^{\ell}\frac{2\abs{\beta_{i}}\abs{E\left(H_{i}\right)}^{2}}{N-2}\leq\sum_{i=1}^{\ell}\frac{2\abs{\beta_{i}}\abs{E\left(H_{i}\right)}^{2}}{\sqrt{n}}
\end{align*}
as needed.
\end{proof}
This result can be generalized to arbitrary $X,Y$, giving us
a bound for the one-norm $\onenorm{\grad f\left(X\right)-\grad f\left(Y\right)}$.
\begin{lem}
\label{lem:grad_is_lipschitz}Let $f$ be a subgraph-counting function.
Let $X,Y\in\overline{\cal C_{n}}$ be two vectors. Then
\[
\onenorm{\grad f\left(X\right)-\grad f\left(Y\right)}\leq C\onenorm{X-Y},
\]
where $C=12\sum_{i=1}^{\ell}\abs{\beta_{i}}\abs{E\left(H_{i}\right)}^{2}$.
\end{lem}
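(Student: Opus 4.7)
The plan is to deduce the global Lipschitz bound from the single-flip bounds in Lemma \ref{lem:similar_vectors_intersect_or_not} via a coordinate-by-coordinate interpolation, handling the extension from $\cal C_n$ to $\overline{\cal C_n}$ using the multi-affine structure of $\grad f$ visible in formula (\ref{eq:how_to_calculate_grad_f}).

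First I will establish the bound for single-flip perturbations $X, Y \in \cal C_n$ that differ only at one edge-index $k$. Summing Lemma \ref{lem:similar_vectors_intersect_or_not} over all edge-indices $j$: the number of $j$ whose edge $e_j$ shares a vertex with $e_k$ is at most $2(N-2) \leq 2\sqrt{2n}$ (each of the two endpoints of $e_k$ has $N-2$ other incident edges), and the remaining $j$'s number at most $n$. This yields
\[
\onenorm{\grad f(X) - \grad f(Y)} \leq 2\sqrt{2n} \cdot \frac{2 \sum_i \abs{\beta_i} \abs{E(H_i)}^2}{\sqrt{n}} + n \cdot \frac{6 \sum_i \abs{\beta_i} \abs{E(H_i)}^2}{n} \leq 12 \sum_i \abs{\beta_i} \abs{E(H_i)}^2 = C,
\]
exactly matching the constant in the statement.

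Next I will extend to $\overline{\cal C_n}$. By equation (\ref{eq:how_to_calculate_grad_f}) each entry of $\grad f$ is a polynomial in the entries of $X$ of degree at most $1$ in each variable, hence multi-affine. For $X, Y \in \overline{\cal C_n}$ differing only in coordinate $k$, multi-affinity gives $\grad f(X) - \grad f(Y) = (X_k - Y_k)\, h(X_{\neq k})$, where $h:[0,1]^{n-1} \to \r^n$ is itself multi-affine. Writing $h(Z) = \expect_U[h(U)]$ with $U$ a product Bernoulli vector of mean $Z$, and applying Jensen to each coordinate,
\[
\onenorm{h(Z)} \leq \expect_U \onenorm{h(U)} \leq \max_{U \in \{0,1\}^{n-1}} \onenorm{h(U)} \leq C,
\]
the last inequality by the single-flip bound already established on $\cal C_n$. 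Hence the single-flip bound $\onenorm{\grad f(X) - \grad f(Y)} \leq C\abs{X_k - Y_k}$ holds on all of $\overline{\cal C_n}$.

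Finally I will chain: for arbitrary $X, Y \in \overline{\cal C_n}$, interpolate via $X = X^{(0)}, X^{(1)}, \ldots, X^{(n)} = Y$ where $X^{(k)}$ differs from $X^{(k-1)}$ only in coordinate $k$, and apply the triangle inequality to obtain $\onenorm{\grad f(X) - \grad f(Y)} \leq \sum_{k} C\abs{X_k - Y_k} = C\onenorm{X-Y}$. The main conceptual subtlety is the extension to continuous vectors via multi-affinity, since Lemma \ref{lem:similar_vectors_intersect_or_not} is stated only on the discrete hypercube; the rest is bookkeeping to ensure the $2(N-2)/\sqrt{n}$ contribution from ``shared-vertex'' edges and the $n/n$ contribution from the remaining edges combine into a single constant independent of $N$.
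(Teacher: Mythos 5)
Your proof is correct and follows essentially the same route as the paper's: sum the single-flip bounds of Lemma \ref{lem:similar_vectors_intersect_or_not} over $j$, using that at most $2(N-2)\le 2\sqrt{2n}$ edges meet $e_k$, and then chain coordinate by coordinate via the triangle inequality. Your multilinear-extension/Bernoulli-expectation argument for passing from $\cal C_n$ to $\overline{\cal C_n}$ is a more careful treatment of a step the paper dispatches with the one-line remark that $\grad f$ is affine in the $k$-th coordinate, but it is the same underlying idea.
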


\begin{proof}
First, assume that $X$ and $Y$ differ only in single coordinate
$k$. Then for each coordinate $j$, either the edge $e_{j}$ intersects
with $e_{k}$ or not. 
Holding all other coordinates fixed, $\nabla f$ is linear as a function of the $k$-th coordinate. Then using Lemma \ref{lem:similar_vectors_intersect_or_not},
we can write:

\begin{align*}
\onenorm{\grad f\left(X\right)-\grad f\left(Y\right)} & =\sum_{j=1}^{n}\abs{\partial_{j}f\left(X\right)-\partial_{j}f\left(Y\right)}\abs{X_{k}-Y_{k}}\\
 & \leq\sum_{i=1}^{\ell}\sum_{j=1}^{n}\left(\one_{e_{j}\intersect e_{k}=\emptyset}\frac{6}{n}+\one_{e_{j}\intersect e_{k}\neq\emptyset}\frac{2}{\sqrt{n}}\right)\abs{\beta_{i}}\abs{E\left(H_{i}\right)}^{2}\abs{X_{k}-Y_{k}}.
\end{align*}
The edge $e_{k}$ can intersect at most $N$ different edges at each
of its endpoints, so the number of indices $j$ for which $e_{j}\intersect e_{k}\neq\emptyset$
is bounded by $2N\leq2\sqrt{2n}$. The number of indices $j$ for
which $e_{j}\intersect e_{k}=\emptyset$ is trivially bounded by $n$,
giving 
\begin{align*}
\onenorm{\grad f\left(X\right)-\grad f\left(Y\right)} & \leq\sum_{i=1}^{\ell}\sum_{j=1}^{n}\left(\one_{e_{j}\intersect e_{k}=\emptyset}\frac{6}{n}+\one_{e_{j}\intersect e_{k}\neq\emptyset}\frac{2}{\sqrt{n}}\right)\abs{\beta_{i}}\abs{E\left(H_{i}\right)}^{2}\abs{X_{k}-Y_{k}}\\
 & \leq\sum_{i=1}^{\ell}\left(6\frac{n}{n}+\frac{2\cdot2\sqrt{2}\sqrt{n}}{\sqrt{n}}\right)\abs{\beta_{i}}\abs{E\left(H_{i}\right)}^{2}\abs{X_{k}-Y_{k}}\\
 & \leq12\sum_{i=1}^{\ell}\abs{\beta_{i}}\abs{E\left(H_{i}\right)}^{2}\abs{X_{k}-Y_{k}}.
\end{align*}
The above reasoning is valid for $X$ and $Y$ which differ by one
coordinate; by the triangle inequality we achieve the desired result
for arbitrary $X,Y\in\overline{\cal C_{n}}$. 
\end{proof}
\begin{proof}[Proof of Theorem \ref{thm:main_theorem}]
By \cite[Section 5]{2016_eldan_gaussian_width}, the Gaussian-width
of the image of $\grad f$ is bounded by 
\[
\cal D\left(f\right)\leq\sum_{i}\abs{\beta}\abs{E(H_{i})}N^{3/2}\leq C_{\boldsymbol{\beta}}n^{3/4}.
\]
By Lemma \ref{lem:lipschitz_of_subgraph_hom}, $\text{Lip}\left(f\right)\leq C_{\boldsymbol{\beta}}$,
and by Lemma \ref{lem:grad_is_lipschitz}, 
\[
\max_{X,Y\in\cal C_{n}}\frac{\onenorm{\grad f\left(X\right)-\grad f\left(Y\right)}}{\onenorm{X-Y}}\leq C_{\boldsymbol{\beta}}
\]
as well. Plugging these bounds into Theorem \ref{thm:main_gibbs_theorem}
we obtain the desired results. 
\end{proof}

\section{Approximate block model for the dense regime \label{sec:general_block_model}}

In this section we prove Theorem \ref{thm:general_block_theorem}.
It will be instructive to first prove the theorem for triangle-counting
functions, as this case is simple and gives easy-to-calculate bounds.
The same techniques will then be used to give a sketch of the proof
for general subgraph-counting functions. 

The proof technique uses random orthogonal projections in order to
perform some of the calculations in a low-dimensional space. For this
we will need the following results concerning concentration of measure
of orthogonal random projections:
\begin{lem}[Orthogonal projections preserve distance. Due to \cite{dasgupta_gupta_johnson_lindenstrauss},
page 62]
\label{lem:johnson_linden_norms}Let $0<\delta<1$, let $d,k>0$
be positive integers, let $\pi:\r^{d}\rightarrow\r^{k}$ be an orthogonal
projection into a uniformly random $k$ dimensional subspace, and
let $g:\r^{d}\rightarrow\r^{k}$ be defined as $g\left(v\right)=\sqrt{\frac{d}{k}}\pi\left(v\right)$.
Then for any vector $v\in\r^{d}$, 
\begin{align*}
\pr{\left(1-\delta\right)\norm v^{2}\leq\norm{g\left(v\right)}^{2}\leq\left(1+\delta\right)\norm v^{2}} & \leq2e^{-k\left(\delta^{2}/2-\delta^{3}/3\right)/2}.
\end{align*}
\end{lem}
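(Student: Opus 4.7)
The plan is to reduce the claim to a standard concentration estimate for ratios of independent chi-squared random variables via the rotational invariance of the Haar measure on the Grassmannian of $k$-dimensional subspaces of $\r^d$. (I read the displayed probability as bounding the complement event; the argument below produces an upper bound of exactly this form.)

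First I would use rotational symmetry twice. Since the law of a uniformly random $k$-dimensional subspace of $\r^d$ is invariant under orthogonal transformations, the distribution of $\norm{\pi(v)}^2$ depends only on $\norm v$. Equivalently, I can fix the subspace to be $\mathrm{span}(e_1,\ldots,e_k)$ and draw $v$ uniformly from the sphere of radius $\norm v$. Using the fact that $Z/\norm Z$ is uniform on the unit sphere in $\r^d$ when $Z\sim N(0,\mathrm{Id})$, this yields the representation
$$\frac{\norm{\pi(v)}^2}{\norm v^2} \stackrel{d}{=} \frac{S}{S+S'},$$
where $S = Z_1^2+\cdots+Z_k^2\sim\chi^2_k$ and $S' = Z_{k+1}^2+\cdots+Z_d^2\sim\chi^2_{d-k}$ are independent.

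Next, since $\norm{g(v)}^2 = (d/k)\norm{\pi(v)}^2$, each of the one-sided deviation events $\{\norm{g(v)}^2>(1+\delta)\norm v^2\}$ and $\{\norm{g(v)}^2<(1-\delta)\norm v^2\}$ is equivalent, after clearing denominators, to a linear inequality between $S$ and $S'$. On each such event I would apply a Chernoff bound, using $\e[e^{tU}] = (1-2t)^{-m/2}$ for $U\sim\chi^2_m$ and $t<1/2$: by independence the joint MGF factors into a product of two such terms, and optimizing in the free parameter $t$ while Taylor-expanding $-\tfrac{1}{2}\log(1-2t)$ to third order produces bounds of the form $\exp\bigl(-\tfrac{k}{2}(\delta^2/2-\delta^3/3)\bigr)$. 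A union bound over the two tails supplies the factor of $2$ out front.

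The only step with any calculation in it is the Chernoff optimization together with the accompanying Taylor expansion, since this is where the precise cubic correction $-\delta^3/3$ in the exponent originates; the symmetry reduction and the MGF factorization are both formal. This is essentially the argument given in the Dasgupta--Gupta paper from which the lemma is cited.
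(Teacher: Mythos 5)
The paper does not prove this lemma itself but cites it from Dasgupta--Gupta, and your outline is precisely their argument (rotational invariance reducing to a $\chi^2_k/\chi^2_d$ ratio, Chernoff bounds on each tail with the third-order Taylor expansion of the logarithm producing the $\delta^2/2-\delta^3/3$ exponent, and a union bound for the factor $2$), so your proposal matches the intended proof. You are also right that the displayed inequality in the statement is a typo: it is the \emph{complement} event whose probability is at most $2e^{-k\left(\delta^{2}/2-\delta^{3}/3\right)/2}$, which is consistent with how the lemma is invoked in equation (\ref{eq:good_event_probability}).
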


From this lemma about the magnitude of vectors, it is possible to
obtain similar bounds on the scalar product between two vectors:
\begin{lem}[Preserving scalar products]
\label{lem:johnson_linden_scalar}Let $0<\delta<1$, let $d,k>0$
be positive integers, and let $g:\r^{d}\rightarrow\r^{k}$ be a linear
transformation. Let $u,v\in\r^{d}$ be two vectors of norm smaller
than $1$ such that $\left(1-\delta\right)\norm{u\pm v}^{2}\leq\norm{g\left(u\pm v\right)}^{2}\leq\left(1+\delta\right)\norm{u\pm v}^{2}$.
Then 
\[
\abs{\left\langle g\left(v_{1}\right),g\left(v_{2}\right)\right\rangle -\left\langle v_{1},v_{2}\right\rangle }\leq2\delta.
\]
\end{lem}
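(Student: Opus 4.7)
The plan is to apply the polarization identity together with the linearity of $g$. Recalling that for any two vectors $a, b$ in an inner product space,
\[
\langle a, b \rangle = \frac{1}{4}\left(\norm{a+b}^{2} - \norm{a-b}^{2}\right),
\]
and exploiting the fact that $g$ is linear so that $g(u) \pm g(v) = g(u \pm v)$, I would first rewrite both $\langle g(u), g(v) \rangle$ and $\langle u, v \rangle$ through polarization. Subtracting, the difference $\langle g(u), g(v) \rangle - \langle u, v \rangle$ becomes a linear combination of the four quantities $\norm{g(u \pm v)}^2 - \norm{u \pm v}^2$, each of which is controlled by the hypothesis.

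Specifically, I would write
\[
\langle g(u), g(v) \rangle - \langle u, v \rangle = \frac{1}{4}\Bigl(\norm{g(u+v)}^{2} - \norm{u+v}^{2}\Bigr) - \frac{1}{4}\Bigl(\norm{g(u-v)}^{2} - \norm{u-v}^{2}\Bigr),
\]
and invoke the hypothesis, which gives $\bigl|\norm{g(u \pm v)}^{2} - \norm{u \pm v}^{2}\bigr| \leq \delta \norm{u \pm v}^{2}$. Applying the triangle inequality then yields a bound of $\tfrac{\delta}{4}\bigl(\norm{u+v}^{2} + \norm{u-v}^{2}\bigr)$.

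Finally, I would invoke the parallelogram identity $\norm{u+v}^{2} + \norm{u-v}^{2} = 2\norm{u}^{2} + 2\norm{v}^{2}$, and use the assumption $\norm{u}, \norm{v} < 1$ to bound this sum by $4$. This gives the final estimate $\abs{\langle g(u), g(v) \rangle - \langle u, v \rangle} \leq \delta$, which is in fact sharper than the claimed $2\delta$ (the looser constant in the statement presumably accommodates the case $\norm{u}, \norm{v} \leq 1$ or some slight slack used in the application). There is no real obstacle here; the entire argument is essentially a one-line application of polarization plus the parallelogram law, and the only thing to be careful about is noting that the norm bound on $u, v$ is needed precisely to convert the sum of squared norms into a concrete constant.
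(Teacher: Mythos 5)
Your proof is correct and follows essentially the same route as the paper's: polarization plus linearity of $g$ reduces everything to the hypothesis on $\norm{g(u\pm v)}^2$. The only difference is cosmetic — the paper bounds each of $\norm{u+v}^2$ and $\norm{u-v}^2$ by $4$ separately (yielding $2\delta$), whereas your use of the parallelogram identity bounds their sum by $4$ and gives the sharper constant $\delta$; both are valid, and the weaker constant suffices for the application.
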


The proof is postponed to the appendix.

\subsection{Counting triangles\label{subsec:block_model_triangle_count}}
\begin{proof}[Proof of Theorem \ref{thm:general_block_theorem} (for the case of triangle-counting functions)]
Let $N$ be a positive integer, let $\alpha,\beta\in\r$ be real
numbers, and let be $f$ of the form
\[
f\left(X\right)=\alpha\inj\left(K_{2},X\right)+\frac{\beta}{N-2}\inj\left(K_{3},X\right)
\]
where $K_{2}$ is the complete graph on two vertices and $K_{3}$
is the triangle graph. Let $X\in\cal X_{f}$. It can be verified by
direct calculation that
\[
f\left(X\right)=\alpha\tr\left(X^{2}\right)+\frac{\beta}{N-2}\text{Tr}\left(X^{3}\right)
\]
and 
\begin{equation}
\grad f\left(X\right)=\alpha\one+\frac{3\beta}{N-2}\overline{X^{2}},\label{eq:gradient_for_triangles}
\end{equation}
where $\overline{X^{2}}$ is the matrix with zero on the diagonal
and whose off-diagonal entries are those of $X^{2}$. We then have
by Theorem \ref{thm:main_theorem} that 
\begin{equation}
\onenorm{X-\frac{\one+\tanh\left(\alpha\one+\frac{3\beta}{N-2}\overline{X^{2}}\right)}{2}}\leq5000C_{\boldsymbol{\beta}}^2n^{15/16}.\label{eq:approximate_fixed_point_01_form}
\end{equation}
We proceed to show that the term $\frac{3\beta}{N-2}\overline{X^{2}}$
is close to a block matrix with a small number of communities. This
is done roughly as follows: Each entry in the matrix $\frac{3\beta}{N-2}\overline{X^{2}}$
can be written as the scalar product of two vectors in $\r^{N}$;
namely, the column vectors of $\sqrt{\frac{3\beta}{N-2}}X$. It is
possible to project these vectors into a low-dimensional space, so
that their scalar products are almost preserved. This low dimensional
projection can then be rounded to a $\delta$-net, whose size depends
only on $\delta$ and on the dimension. Thus if the dimension is small,
then the $\delta$-net is small. The matrix $\frac{3\beta}{N-2}\overline{X^{2}}$
can then be approximated by scalar products of elements from the $\delta$-net,
and each element in the net defines a community. Applying $\tanh$
entrywise, adding the constant $\one$ and dividing by $2$ does not
change the block model parameters, implying that $X$ itself is close
to a block matrix.

Denote by $v_{i}$ the $i$-th column of $X$ multiplied by $1/\sqrt{N}$,
so that 
\[
\left(v_{i}\right)_{j}=\frac{1}{\sqrt{N}}X_{ij}.
\]
Since all the entries of $X$ are in $\left[0,1\right]$, each $v_{i}$
lies within the unit ball:
\begin{equation}
\norm{v_{i}}^{2}=\frac{1}{N}\sum_{j=1}^{N}X^2_{ij}\leq1.\label{eq:norm_one_for_indicators}
\end{equation}
Let $f$ be a triangle-counting function, and assume that $\beta=1$.
Then for two distinct vertices $i$ and $j$, the derivative $\partial_{ij}f$
is equal to 
\[
\partial_{ij}f\left(X\right)=\frac{N}{N-2}\left\langle v_{i},v_{j}\right\rangle =\frac{N}{N-2}\frac{1}{N}\sum_{k}X_{ik}X_{kj}=\left(\frac{1}{N-2}\overline{X^{2}}\right)_{ij}.
\]
This is because the difference between $\inj\left(K_{3},G\right)$
with $G$ containing the edge $ij$ and $\inj\left(K_{3},G\right)$
where $G$ does not contain the edge $ij$ is exactly the sum of weights
of all the triangles of the form $ijk$ for $k=1,\ldots,n$. 

Let $k>0$ be a positive integer to be chosen later, let $U\subseteq\r^{N}$
be a uniformly random subspace of dimension $k$, and denote by $\pi:\r^{N}\rightarrow U$
an orthogonal projection from $\r^{N}$ into $U$. Let $g:\r^{N}\rightarrow U$
be defined as $g\left(v\right)=\sqrt{\frac{N}{k}}\pi\left(v\right)$.
For every two indices $i\neq j$, denote 
\[
\cal B_{ij}=\left\{ \left(1-\delta\right)\norm x^{2}\leq\norm{g\left(x\right)}^{2}\leq\left(1+\delta\right)\norm x^{2}\text{ for \ensuremath{x\in\left\{ v_{i},v_{j},v_{i}+v_{j},v_{i}-v_{j}\right\} }}\right\} ,
\]
the event that $g$ almost preserves the squared norm of both of the
original vectors $v_{i}$ and $v_{j}$ and of their sum and difference
$v_{i}+v_{j}$ and $v_{i}-v_{j}$. By Lemma \ref{lem:johnson_linden_norms},
the probability for $\cal B_{ij}$ to occur is at least 
\begin{equation}
\pr{\cal B_{ij}}\geq1-8e^{-k\left(\delta^{2}/2-\delta^{3}/3\right)/2}.\label{eq:good_event_probability}
\end{equation}
Under this event, since $\delta<1$, both $g\left(v_{i}\right)$ and
$g\left(v_{j}\right)$ are contained inside a ball of radius $2$
around the origin. Further, by Lemma \ref{lem:johnson_linden_scalar},
the scalar product between $v_{i}$ and $v_{j}$ is also almost preserved:
\begin{equation}
\abs{\left\langle g\left(v_{i}\right),g\left(v_{j}\right)\right\rangle -\left\langle v_{i},v_{j}\right\rangle }\leq2\delta.\label{eq:almost_there_scalar}
\end{equation}
Let $T$ be a $\delta$-net of the ball of radius $2$ around the
origin in $k$ dimensions. By \cite[lemma 2.6]{milman_schechtman_asymptotic_theory},
there exists such a net of size smaller than $\left(1+4/\delta\right)^{k+1}$.
For every vertex $i$, denote by $w_{i}=\text{argmin}_{w\in T}\norm{g\left(v_{i}\right)-w}$
the vector in $T$ that is closest to $g\left(v_{i}\right)$, and
denote by $\Delta w_{i}=w_{i}-g\left(v_{i}\right)$ the difference
between the two. Then under $\cal B_{ij}$, since $g\left(v_{i}\right)$
is in the ball of radius $2$, the magnitude of the difference $\norm{\Delta w_{i}}$
is smaller than $\delta$. In this case, 
\begin{align*}
\abs{\left\langle w_{i},w_{j}\right\rangle -\left\langle g\left(v_{i}\right),g\left(v_{j}\right)\right\rangle } & =\abs{\left\langle g\left(v_{i}\right)+\Delta w_{i},g\left(v_{j}\right)+\Delta w_{j}\right\rangle -\left\langle g\left(v_{i}\right),g\left(v_{j}\right)\right\rangle }\\
 & =\abs{\left\langle g\left(v_{i}\right),\Delta w_{j}\right\rangle +\left\langle \Delta w_{i},g\left(v_{j}\right)\right\rangle +\left\langle \Delta w_{i},\Delta w_{j}\right\rangle }\\
\left(\text{since \ensuremath{\norm{\Delta w}\leq\delta}}\right) & \leq6\delta.
\end{align*}
Thus, under $\cal B_{ij}$ and together with equation (\ref{eq:almost_there_scalar}),
we almost surely have that 
\[
\abs{\left\langle w_{i},w_{j}\right\rangle -\left\langle v_{i},v_{j}\right\rangle }\leq8\delta.
\]
Denote by $\tilde{X}$ the matrix defined by $\left(\tilde{X}\right)_{ij}=\left\langle w_{i},w_{j}\right\rangle $
for $i\neq j$ and with $0$ on the diagonal. It is clear that the
matrix $\tilde{X}$ is a block matrix, with the communities in correspondence
with the elements of the $\delta$-net $T$; hence there are no more
than $\left(1+4/\delta\right)^{k+1}$ communities in $\tilde{X}$.

The expected value of the one-norm between $\frac{1}{N}\overline{X^{2}}$
and $\tilde{X}$ is
\begin{align}
\e\norm{\frac{1}{N}\overline{X^{2}}-\tilde{X}}_{1} & =\e\sum_{i,j}\abs{\frac{1}{N}\left(\overline{X^{2}}\right)_{ij}-\left(\tilde{X}\right)_{ij}}\nonumber \\
 & =\sum_{i\neq j}\e\abs{\left\langle v_{i},v_{j}\right\rangle -\left\langle w_{i},w_{j}\right\rangle }.\label{eq:one_norm_expectation_bound_triangles}
\end{align}
Each expectation term of the form $\e\abs{\left\langle v_{i},v_{j}\right\rangle -\left\langle w_{i},w_{j}\right\rangle }$
can be controlled by conditioning on the event $\cal B_{ij}$. Keeping
in mind that in the general case $\abs{\left\langle v_{i},v_{j}\right\rangle -\left\langle w_{i},w_{j}\right\rangle }\leq5$
since the norm of $v_{i}$ and $v_{j}$ is bounded by $1$ and the
norm of $w_{i}$ and $w_{j}$ is bounded by $2$, we can bound the
expectation by 
\begin{align*}
\e\abs{\left\langle v_{i},v_{j}\right\rangle -\left\langle w_{i},w_{j}\right\rangle } & =\e\left[\abs{\left\langle v_{i},v_{j}\right\rangle -\left\langle w_{i},w_{j}\right\rangle }\mid\cal B_{ij}\right]\pr{\cal B_{ij}}\\
 & \,\,\,\,\,\,+\e\left[\abs{\left\langle v_{i},v_{j}\right\rangle -\left\langle w_{i},w_{j}\right\rangle }\mid\lnot\cal B_{ij}\right]\pr{\lnot\cal B_{ij}}\\
 & \leq8\delta\cdot1+5\cdot8e^{-k\left(\delta^{2}/2-\delta^{3}/3\right)/2}.
\end{align*}
Choosing $k=\left\lceil 2\log\left(1/\delta\right)\left(\delta^{2}/2-\delta^{3}/3\right)^{-1}\right\rceil $,
we have
\[
\e\abs{\left\langle v_{i},v_{j}\right\rangle -\left\langle w_{i},w_{j}\right\rangle }\leq48\delta.
\]
Plugging this into equation (\ref{eq:one_norm_expectation_bound_triangles}),
we obtain the bound 
\[
\e\norm{\frac{1}{N}\overline{X^{2}}-\tilde{X}}_{1}\leq48\delta n.
\]
Hence, there exists a block matrix $\hat{X}$ with no more than $\left(1+4/\delta\right)^{k+1}$
communities such that
\[
\norm{\frac{1}{N}\overline{X^{2}}-\hat{X}}_{1}\leq48\delta n.
\]
Multiplying both sides by $3\beta N/\left(N-2\right)$, we have that
\[
\norm{\frac{3\beta}{N-2}\overline{X^{2}}-\frac{3\beta N}{N-2}\hat{X}}_{1}\leq144\frac{N}{N-2}\beta\delta n\leq450\beta\delta n.
\]
Note that the function $\tanh$ is contracting; that is,
\begin{equation}
\abs{\tanh\left(x\right)-\tanh\left(y\right)}\leq\abs{x-y}.\label{eq:tanh_is_contracting}
\end{equation}
This gives implies that
\begin{align*}
\norm{\frac{\one+\tanh\left(\alpha\one+\frac{3\beta}{N-2}\overline{X^{2}}\right)}{2}-\frac{\one+\tanh\left(\alpha\one+\frac{3\beta N}{N-2}\hat{X}\right)}{2}}_{1} & =\frac{1}{2}\norm{\frac{3\beta}{N-2}\overline{X^{2}}-\frac{3\beta N}{N-2}\hat{X}}_{1}\\
 & \leq225\beta\delta n.
\end{align*}
Finally, by equation (\ref{eq:approximate_fixed_point_01_form}),
\[
\norm{X-\frac{\one+\tanh\left(\alpha\one+\frac{3\beta}{N}\overline{X^{2}}\right)}{2}}_{1}\leq5000C_{\boldsymbol{\beta}}^2n^{15/16},
\]
and so by the triangle inequality, denoting $X^{*}=\frac{\one+\tanh\left(\alpha\one+3\beta\hat{X}\right)}{2}$,
\[
\onenorm{X-X^{*}}\leq225\beta\delta n+5000C_{\boldsymbol{\beta}}^2n^{15/16}.
\]
\end{proof}

\subsection{Counting general subgraphs}

In this section we give a proof sketch of general form of Theorem
\ref{thm:general_block_theorem}. The proof relies on the same techniques
as those in the previous subsection, which gave block matrix bounds
for the specific case of triangles. 

Let $X\in\cal X_{f}$. The main argument in the previous proof was
as follows: For triangles, each entry in the gradient $\grad f\left(X\right)=\frac{3\beta}{N-2}\overline{X^{2}}$
was written as a scalar product between two vectors. These vectors
were then projected to a low dimensional space, yielding a block matrix
form. 

We will generalize the above procedure, and show that the gradient
$\grad f$ of any subgraph-counting function $f$ can be written as
a sum of scalar products of vectors: There exist an integer $S>0$,
a family of constants $c_{r}$, $r=1,\ldots,S$, and two families
of vectors $v_{i}^{r}$ and $u_{i}^{r}$ of norm smaller than $1$,
such that

\begin{equation}
\partial_{ij}f=\sum_{r=1}^{S}c_{r}\left\langle v_{i}^{r},u_{j}^{r}\right\rangle .\label{eq:general_block_good_form}
\end{equation}
The number of scalar products $S$ and the constants $c_{r}$ depend
on the subgraphs $H_{k}$ that $f$ counts and their weights $\beta_{k}$,
but do not grow explicitly with $N$. Repeating the reasoning in the
previous proof, these vectors can all be simultaneously projected
by an orthogonal projection $g$ to a low dimensional space, so that
\[
\partial_{ij}f\approx\sum_{r=1}^{S}c_{r}\left\langle g\left(v_{i}^{r}\right),g\left(u_{j}^{r}\right)\right\rangle .
\]
Taking a $\delta$-net of the sphere in the new space will give us
an approximation of these sums: For every $r$ we will obtain a block
matrix $W^{r}$ whose $ij$-th entry approximates the scalar product
$\left\langle g\left(v_{i}^{r}\right),g\left(u_{j}^{r}\right)\right\rangle $.
As before, the number of communities of $W^{r}$ will depend only
on $\delta$. Finally, since the sum of $S$ block matrices is also
a block matrix (albeit with a number of communities exponential in
$S$), the sum $\sum_{r=1}^{S}c_{r}\left\langle g\left(v_{i}^{r}\right),g\left(u_{j}^{r}\right)\right\rangle $
is itself a block matrix, with a number of communities that depends
only on the subgraphs $H_{k}$, their weights $\beta_{k}$, and on
$\delta$.

Let us now fill in some of the details for this proof sketch. Let
$H=\left(\left[m\right],E\left(H\right)\right)$ be a finite simple
graph on $m$ vertices with edge set $E\left(H\right)$. This simple
edge set can also be viewed as a directed edge set, with two directed
edges replacing every original simple edge: $D\left(H\right)=\bigcup_{\left\{ x,y\right\} \in E\left(H\right)}\left\{ \left(x,y\right),\left(y,x\right)\right\} $.
The essential part of the proof is showing that $\partial_{i,j}\inj\left(H,G\right)$
can be obtained by scalar products as above; the rest will follow
from linearity. 

Let $i$ be a vertex of $G$ and let $e=\left(x,y\right)\in D\left(H\right)$
be an oriented edge of $H$. Denote by $\Phi_{e}$ the set of all
injective maps from $H\backslash\left\{ x,y\right\} $ to $G$. The
vectors $v_{i}^{e}$ and $u_{i}^{e}$ will have one entry for every
function $\phi\in\Phi_{e}$. For $v_{i}^{e}$, the entry $v_{i}^{e}\left(\phi\right)$
contains the weight of edges from $i$ to the image $\phi\left(H\backslash\left\{ y\right\} \right)$,
times the square root of the weight of the image $\phi\left(H\backslash\left\{ x,y\right\} \right)$.
For $u_{i}^{e}$, the entry $u_{i}^{e}\left(\phi\right)$ contains
the weight of edges from $i$ to the image $\phi\left(H\backslash\left\{ x\right\} \right)$,
times the square root of the weight of the image $\phi\left(H\backslash\left\{ x,y\right\} \right)$.
More formally, for every $\phi\in\Phi_{e}$,
\begin{align*}
v_{i}^{e}\left(\phi\right) & =\prod_{a\,s.t\,\left\{ x,a\right\} \in E\left(H\backslash\left\{ y\right\} \right)}X_{i,\phi\left(a\right)}\prod_{\left\{ a,b\right\} \in E\left(H\backslash\left\{ x,y\right\} \right)}\sqrt{X_{\phi\left(a\right),\phi\left(b\right)}}\\
u_{i}^{e}\left(\phi\right) & =\prod_{a\,s.t\,\left\{ y,a\right\} \in E\left(H\backslash\left\{ x\right\} \right)}X_{i,\phi\left(a\right)}\prod_{\left\{ a,b\right\} \in E\left(H\backslash\left\{ x,y\right\} \right)}\sqrt{X_{\phi\left(a\right),\phi\left(b\right)}}.
\end{align*}
For two different vertices $i\neq j$, the scalar product between
two vectors becomes
\[
\left\langle v_{i}^{e},u_{j}^{e}\right\rangle =\sum_{\phi\in\Phi_{e}}\left(\prod_{\left\{ x,a\right\} \in E\left(H\backslash\left\{ y\right\} \right)}X_{i,\phi\left(a\right)}\prod_{\left\{ y,a\right\} \in E\left(H\backslash\left\{ x\right\} \right)}X_{j,\phi\left(a\right)}\prod_{\left\{ a,b\right\} \in E\left(H\backslash\left\{ x,y\right\} \right)}X_{\phi\left(a\right),\phi\left(b\right)}.\right)
\]
Let's inspect this scalar product. For each fixed $\phi$, the summand
is the edge weight of the image of the homomorphism $\psi:H\rightarrow G$,
where
\[
\psi\left(z\right)=\begin{cases}
i & z=x\\
j & z=y\\
\phi\left(z\right) & \text{o.w.}
\end{cases}.
\]
The mapping $\psi$ is in general not an injection: Although $\phi$
itself was chosen to be an injection, the function $\psi$ is not
one-to-one when $\phi\left(a\right)=i$ or $\phi\left(a\right)=j$
for some $a\in H$. But in this case, either $X_{i,\phi\left(a\right)}$
or $X_{j,\phi\left(a\right)}$ are $0$, since the diagonal entries
of $X$ are $0$. Thus, summing over all $\phi$ effectively means
summing over all injective mappings that send the particular (directed)
edge $\left(x,y\right)$ in $H$ to $\left(i,j\right)$ in $G$. By
the discussion in the proof of Lemma \ref{lem:lipschitz_of_subgraph_hom},
summing over all possible edges $e$ that can map to $\left(i,j\right)$
exactly gives the definition of the discrete derivative: 
\[
\partial_{ij}\inj\left(H,G\right)=\frac{1}{2}\sum_{e\in D\left(H\right)}\left\langle v_{i}^{e},u_{j}^{e}\right\rangle .
\]
The gradient of a subgraph-counting function that counts a single
subgraph $H$ with weight $\beta$ can then be written as 
\[
\partial_{ij}f=\frac{\beta}{2\left(N-2\right)\ldots\left(N-m+1\right)}\sum_{e\in D\left(H\right)}\left\langle v_{i}^{e},u_{j}^{e}\right\rangle .
\]
When we proved the theorem for the case of triangles, it was important
that the vectors were of unit length - this meant that the projection
was contained in a ball of radius $2$, and this is what allowed us
to take a $\delta$-net that did not depend on $N$. This is the case
here as well: Each entry of $v_{i}^{e}$ and $u_{i}^{e}$ is bounded
by $1$. Their norm is therefore bounded by the square root of the
number of entries, which is the number of injective mappings from
$H\backslash\left\{ x,y\right\} $ to $G$. Thus,
\[
\norm{v_{i}^{e}}^{2}\leq\abs{\Phi_{e}}<N\left(N-1\right)\ldots\left(N-m+3\right).
\]
This means that $v_{i}^{e}/\sqrt{N\cdot\ldots\cdot\left(N-m+3\right)}$
and $u_{i}^{e}/\sqrt{N\cdot\ldots\cdot\left(N-m+3\right)}$ have their
norm bounded by $1$.

Finally, for the case of general subgraph-counting functions that
count the subgraphs $H_{1},\ldots,H_{\ell}$ with weights $\beta_{1},\ldots,\beta_{\ell}$,
we have that 

\begin{align*}
\partial_{ij}f = & \sum_{k=1}^{\ell}\frac{N\left(N-1\right)}{\left(N-m_{k}+2\right)\left(N-m_{k}+1\right)} \cdot \\ 
& \left(\frac{\beta_{k}}{2}\sum_{r=1}^{2\abs{E\left(H_{k}\right)}}\left\langle \frac{v_{i}^{k,r}}{\sqrt{N\cdot\ldots\cdot\left(N-m_{k}+3\right)}},\frac{u_{j}^{k,r}}{\sqrt{N\cdot\ldots\cdot\left(N-m_{k}+3\right)}}\right\rangle \right).
\end{align*}
This shows that $\partial_{ij}f$ can indeed be written in the form
of equation (\ref{eq:general_block_good_form}). 

\section{Positive weights\label{sec:existence_and_positive_weghts}}

\subsection{The exact case\label{subsec:general_uniqueness}}

We would like to first give some intuition regarding the proof of
Theorem \ref{thm:positive_uniqueness}: We will show that if all the
weights $\beta_{i}$ are positive and if $x=\varphi\left(x\right)$
has a unique solution, then the fixed point equation $X=\left(\one+\tanh\left(\grad f\left(X\right)\right)\right)/2$
has a single solution $x_{0}\one$. The proof that any $X\in\cal X_{f}$
is close to $x_{0}\one$ will be more involved but analogous.

For clarity, we will assume that $f$ counts edges and triangles.
Let $\alpha,\beta\in\r$ with $\beta>0$, and let $f$ be of the form
\[
f\left(X\right)=\alpha\inj\left(K_{2},X\right)+\frac{\beta}{N-2}\inj\left(K_{3},X\right),
\]
where $K_{2}$ is an edge and $K_{3}$ is the triangle graph. Direct
calculation shows that $\grad f\left(X\right)=\alpha\one+\frac{3\beta}{N-2}\overline{X^{2}}$.
In terms of the adjacency matrix, the fixed point equation is then
\begin{equation}
X=\frac{\one+\tanh\left(\alpha\one+\frac{3\beta}{N-2}\overline{X^{2}}\right)}{2}.\label{eq:fixed_point_equation_with_edge_term}
\end{equation}
Let $X$ be a solution to equation (\ref{eq:fixed_point_equation_with_edge_term}).
Denote by $a$ the minimum off-diagonal entry of $X$ and by $b$
the maximum off-diagonal entry of $X$. For every index $i$ and $j$
with $i\neq j$ we have: 
\begin{align*}
\frac{3\beta}{N-2}\left(\overline{X^{2}}\right)_{ij} & =\frac{3\beta}{N-2}\sum_{k=1}^{N}X_{ik}X_{kj}.
\end{align*}
For $k=i$ and $k=j$, we have $X_{ii}=X_{jj}=0$. For all other indices
$k$, $X_{ik}\leq b$ by definition, so

\begin{align}
\frac{3\beta}{N-2}\left(\overline{X^{2}}\right)_{ij} & \leq3\beta b^{2}.\label{eq:smaller_than_b}
\end{align}
This is where the condition $\beta>0$ comes into play: The inequality
would have been reversed had $\beta$ been negative. The maximum element
of the right hand side of equation (\ref{eq:fixed_point_equation_with_edge_term})
is
\[
\max\frac{1+\tanh\left(\alpha\one+\frac{3\beta}{N-2}\overline{X^{2}}\right)}{2}\leq\frac{1+\tanh\left(\alpha\one+3\beta b^{2}\right)}{2}.
\]
Taking the maximum of both sides of equation (\ref{eq:fixed_point_equation_with_edge_term}),
we get
\[
b\leq\frac{1+\tanh\left(\alpha\one+3\beta b^{2}\right)}{2}.
\]
By similar argument, we get that 
\[
\frac{3\beta}{N-2}\left(\overline{X^{2}}\right)_{ij}\geq3\beta a^{2},
\]
and hence
\[
a\geq\frac{1+\tanh\left(\alpha\one+3\beta a^{2}\right)}{2}.
\]
Putting both of these together, we must solve the two inequalities
\begin{align}
2a-1 & \geq\tanh\left(\alpha\one+3\beta a^{2}\right)\nonumber \\
2b-1 & \leq\tanh\left(\alpha\one+3\beta b^{2}\right).\label{eq:uniqueness_inequalities}
\end{align}
By assumption, there is exactly one solution $x_{0}$ to the equation
$2x-1=\tanh\left(\alpha\one+3\beta x^{2}\right)$. By equation (\ref{eq:uniqueness_inequalities}),
we would then need that $a\geq x_{0}$ and $b\leq x_{0}$. But $a$
is the minimum off-diagonal entry of $X$ and $b$ is the maximum
off-diagonal entry of $X$, so they must be equal. Hence the constant
solution $x_{0}\one$ of Lemma \ref{lem:simple_constant_solution_D_bound}
is the only solution. See Figure \ref{fig:min_max} for an illustration.

\begin{figure}[H]
\begin{centering}
\includegraphics[scale=0.5]{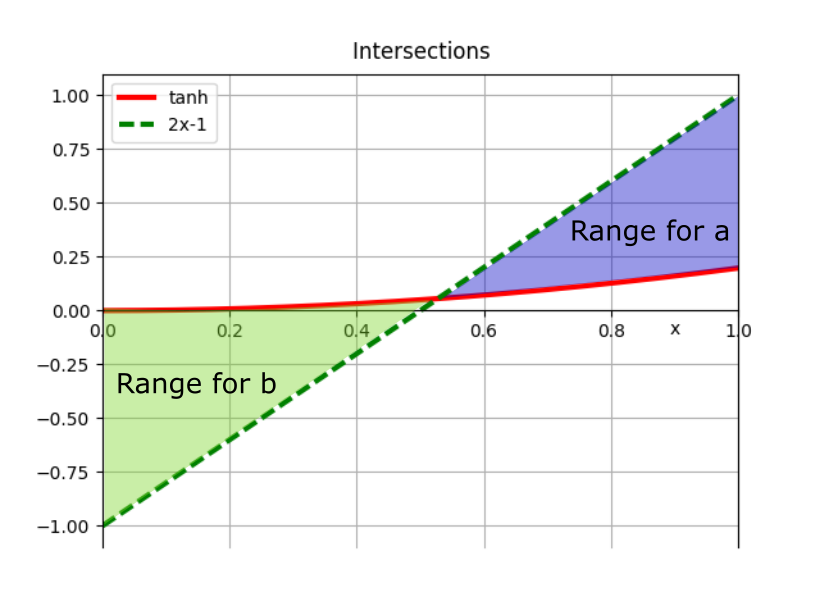}
\par\end{centering}
\caption{An illustration of the permissible range for $a$ and $b$.\label{fig:min_max}}
\end{figure}
In order to generalize this argument to any subgraph-counting function,
recall that every entry of $\grad f\left(x\one\right)$ is just some
polynomial $p\left(x\right)$. If all the weights are $\beta_{i}$
are positive then the preceding argument can be repeated for $p\left(x\right)$
with the inequalities all intact.

\subsection{Closeness\label{subsec:general_closeness}}
\begin{proof}[Proof of Theorem \ref{thm:positive_uniqueness}]
Let $X\in\cal X_{f}$. We would have liked to use an argument in
the same vein as that of subsection \ref{subsec:general_uniqueness}
and claim that the solution $X$ is close to a constant solution because
its minimum and maximum entries are close to each other. However,
this is not in general true: A matrix $X$ can easily have $\min X=0$
and $\max X=1$ while still satisfying the equation $\onenorm{X-\left(\one+\tanh\left(\grad f\left(X\right)\right)\right)/2}=o\left(n\right)$,
since the equation is not sensitive to changes in a small number of
entries. 

To overcome this, we will iterate the function $\frac{\one+\tanh\left(\grad f\left(X\right)\right)}{2}$,
showing that each time we do so, the minimum and maximum values tend
closer to a constant. 

Define the sequence of functions $\left\{ \varphi_{i}\right\} _{i=1}^{\infty}$
by $\varphi_{1}\left(x\right)=\varphi_{\boldsymbol{\beta}}\left(x\right)$
and $\varphi_{i+1}\left(x\right)=\varphi\left(\varphi_{i}\left(x\right)\right)$
for $i\geq1$. Denote $k=\ceil{\frac{\log\lambda}{\log D_{\boldsymbol{\beta}}}}=\ceil{\log_{D_{\boldsymbol{\beta}}}\left(\lambda\right)}$.
By assumption, for all $x_{0}\in\left[0,1\right]$ we have 
\[
\abs{\varphi_{\boldsymbol{\beta}}\left(x\right)-x_{0}}\leq D_{\boldsymbol{\beta}}\abs{x-x_{0}}.
\]
This implies that 
\begin{equation}
\abs{\varphi_{k}\left(x\right)-x_{0}}\leq D_{\boldsymbol{\beta}}^{k}\abs{x-x_{0}}\leq\lambda.\label{eq:h_is_close_to_x0}
\end{equation}
Denote by $\Phi:\r^{n}\rightarrow\r^{n}$ the function $\Phi\left(X\right)=\frac{\one+\tanh\left(\grad f\left(X\right)\right)}{2}$,
let $Y_{0}=X$ and recursively define $Y_{i+1}=\Phi\left(Y_{i}\right)$.
Then $\norm{Y_{k}-x_{0}\one}_{\infty}\leq\lambda$. To see this, observe
that since all $\beta$'s are positive, 
\begin{align*}
\min Y_{1}=\min\Phi\left(X\right) & =\min\frac{1+\tanh\left(\grad f\left(X\right)\right)}{2}\\
 & \geq\min\frac{1+\tanh\left(\grad f\left(\left(\min X\right)\one\right)\right)}{2}\\
 & =\varphi\left(\min X\right).
\end{align*}
Iterating, we have that 
\[
\min Y_{k}\geq\varphi_{k}\left(\min X\right).
\]
But by equation (\ref{eq:h_is_close_to_x0}), $\abs{\varphi_{k}\left(x\right)-x_{0}}<\lambda$
for every $x\in\left[0,1\right]$, and in particular for $\min X$.
Hence 
\[
\min Y_{k}\in\left[x_{0}-\lambda,x_{0}+\lambda\right].
\]
The same argument can be applied to $\max Y_{k}$, showing that all
of $Y_{k}$'s entries are in $\left[x_{0}-\lambda,x_{0}+\lambda\right]$.
Consequently, 
\begin{equation}
\onenorm{Y_{k}-x_{0}\one}\leq\lambda n.\label{eq:positive_initial_step-2}
\end{equation}
The distance between $X$ and $Y_{k}$ can be bounded as follows.
By Lemma \ref{lem:grad_is_lipschitz}, we have that for any two matrices
$A$ and $B$,
\begin{align*}
\onenorm{\Phi\left(A\right)-\Phi\left(B\right)} & \leq C_{\boldsymbol{\beta}}\onenorm{A-B},
\end{align*}
This gives a bound on consecutive iterations:
\begin{align*}
\onenorm{Y_{i}-Y_{i-1}} & =\onenorm{\Phi\left(Y_{i-1}\right)-\Phi\left(Y_{i-2}\right)}\\
 & \leq C_{\boldsymbol{\beta}}\onenorm{Y_{i-1}-Y_{i-2}},
\end{align*}
and so by induction, 
\[
\onenorm{Y_{i}-Y_{i-1}}\leq C_{\boldsymbol{\beta}}^{i}\onenorm{X-Y_{1}}=C_{\boldsymbol{\beta}}^{i}\onenorm{X-\Phi\left(X\right)}.
\]
Using this bound, we have
\begin{align}
\onenorm{X-Y_{k}} & =\onenorm{\sum_{i=1}^{k}Y_{i}-Y_{i-1}}\nonumber \\
 & \leq\sum_{i=1}^{k}\onenorm{Y_{i}-Y_{i-1}}\nonumber \\
 & \leq\sum_{i=1}^{k}C_{\boldsymbol{\beta}}^{i}\onenorm{X-\Phi\left(X\right)}\nonumber \\
 & \leq2C_{\boldsymbol{\beta}}^{k}\onenorm{X-\Phi\left(X\right)}.\label{eq:positive_final_step-1}
\end{align}
Combining equations (\ref{eq:positive_initial_step-2}), (\ref{eq:positive_final_step-1}),
and Theorem \ref{thm:main_theorem}, we have
\begin{align*}
\onenorm{X-x_{0}\one} & \leq\lambda n+C_{\boldsymbol{\beta}}^{\frac{\log\lambda}{\log D_{\boldsymbol{\beta}}}+1}10000C_{\boldsymbol{\beta}}^2 n^{15/16}\\
 & =\lambda n+10000C_{\boldsymbol{\beta}}^3\lambda^{\frac{\log C_{\boldsymbol{\beta}}}{\log D_{\boldsymbol{\beta}}}} n^{15/16}.
\end{align*}
Optimizing over $\lambda$ gives the dependence described in equation
(\ref{eq:optimized_lambda}).
\end{proof}

\section{Small weights}\label{sec:small_weights}

In this section we prove Theorem \ref{thm:small_weights}. 

\begin{proof}

We'll show that the function
\[
\Phi_{f}\left(X\right)=\frac{1+\tanh\left(\grad f\left(X\right)\right)}{2}
\]
is contracting if $S_{\boldsymbol{\beta}}<1$. For that, we'll need the following lemma, whose proof is postponed to the appendix:

\begin{lem}
	\label{lem:subgraph_counting_can_contract}Let $f$ be a subgraph
	counting function. Then for any two matrices $X,Y\in\overline{\mathcal{C}_{n}}$,
	\[
	\norm{\grad f\left(X\right)-\grad f\left(Y\right)}_{1}\leq\sum_{i=1}^{\ell}\abs{\beta_{i}}m_{i}\left(m_{i}-1\right)\onenorm{X-Y}.
	\]
\end{lem}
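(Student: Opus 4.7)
My plan is to reduce the lemma to a uniform bound on a suitable $\ell^1 \to \ell^1$ operator norm of the Hessian of $f$, using that every subgraph-counting function is a multilinear polynomial in the entries of $X$ (each variable appears with degree $0$ or $1$, as noted after equation (\ref{eq:how_to_calculate_grad_f})). For such an $f$ the single partial $\partial_{ij} f$ is itself multilinear in the remaining coordinates, so applying the one-dimensional fundamental theorem of calculus coordinate by coordinate along the segment from $Y$ to $X$ gives
\begin{equation*}
\partial_{ij} f(X) - \partial_{ij} f(Y) = \sum_{\{k,l\} \neq \{i,j\}} (X_{kl} - Y_{kl}) \int_0^1 \partial^{\mathrm{cont}}_{kl}\partial_{ij} f\bigl((1-t)Y + tX\bigr)\, dt,
\end{equation*}
where $\partial^{\mathrm{cont}}$ denotes the ordinary (continuous) partial. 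A short calculation shows that for multilinear functions the discrete partial equals one half of the continuous partial, so $\partial^{\mathrm{cont}}_{kl}\partial_{ij} f = 2\,\partial^2_{ij,kl} f$, with $\partial^2$ the iterated \emph{discrete} second derivative. Taking absolute values, summing over $\{i,j\}$, and swapping sum with integral reduces the task to proving
\begin{equation*}
\sup_{Z \in \overline{\cal C_n}}\, \max_{\{k,l\}} \sum_{\{i,j\}} \abs{\partial^2_{ij,kl} f(Z)} \leq \tfrac{1}{2} \sum_{i=1}^\ell \abs{\beta_i} m_i(m_i-1).
\end{equation*}

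I would then estimate these Hessian entries directly from the combinatorial representation (\ref{eq:how_to_calculate_f}). For a single subgraph $H$ with $v_H$ vertices and $m = \abs{E(H)}$ edges, expanding the mixed continuous derivative $\partial^{\mathrm{cont}}_{kl}\partial^{\mathrm{cont}}_{ij}\inj(H,Z)$ writes it as a sum over triples $(q,\alpha,\beta)$, where $q \in [N]^{v_H}$ has distinct entries and $\alpha,\beta$ are \emph{distinct} oriented edges of $H$ whose images under $q$ realize $\{i,j\}$ and $\{k,l\}$ respectively, weighted by a product of remaining entries that is at most $1$. Since $f$ is multilinear and $\{i,j\} \neq \{k,l\}$, one has $\partial^2_{ij,kl} f = \tfrac14 \partial^{\mathrm{cont}}_{kl}\partial^{\mathrm{cont}}_{ij} f$, so $\abs{\partial^2_{ij,kl}\inj(H,Z)}$ is bounded by $\tfrac14$ times the cardinality of this configuration set.

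The key step is to sum over $\{i,j\}$: this simply erases the constraint on the image of the edge $\alpha$. For each of the $m(m-1)$ ordered pairs of distinct unordered edges of $H$, and for each of the two orientations of the image of $\beta$ realizing $\{k,l\}$, the remaining $v_H - 2$ vertices can be mapped injectively in at most $(N-2)(N-3)\cdots(N-v_H+1)$ ways. Dividing by the normalization $(N-2)\cdots(N-v_H+1)$ built into the definition of $f$ leaves exactly $\tfrac12 \abs{\beta} m(m-1)$ per subgraph. Summing over $H_1,\ldots,H_\ell$ and combining with the factor of $2$ from the first step produces the stated bound.

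The main thing to get right is the careful bookkeeping of the factors of $\tfrac12$, $\tfrac14$, and $2$ appearing in the passage between discrete and continuous derivatives and between ordered and unordered edges of $H$. Once these are pinned down the estimate is a direct counting argument whose only slack comes from bounding the entries of $Z$ by $1$; in particular, the constant $m_i(m_i-1)$ is essentially sharp up to this bound.
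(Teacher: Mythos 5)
Your argument is correct and the constants check out: the fundamental-theorem-of-calculus step, the identity $\partial^{\mathrm{cont}}_{kl}\partial_{ij}f=2\,\partial^{2}_{ij,kl}f$ for multilinear $f$, and the count of $2m(m-1)(N-2)\cdots(N-v_H+1)$ configurations per subgraph (after the loose phrase ``distinct oriented edges'' is replaced, as you do in the counting step, by ordered pairs of distinct unordered edges together with an orientation of the image of the second one) combine to give exactly $\sum_{i}\abs{\beta_i}m_i(m_i-1)\onenorm{X-Y}$. The route differs from the paper's in the analytic step: the paper avoids calculus entirely, applying the elementary telescoping inequality $\abs{\prod_{\alpha}X_\alpha-\prod_{\alpha}Y_\alpha}\le\sum_{\alpha}\abs{X_\alpha-Y_\alpha}$ for entries in $[0,1]$ (Lemma \ref{lem:products_into_sums}) directly to each monomial in the explicit formula (\ref{eq:how_to_calculate_grad_f}) for $\partial_{ij}f$, and then counting, exactly as you do, how many times a given term $\abs{X_{\alpha\beta}-Y_{\alpha\beta}}$ occurs --- namely $m(m-1)$ ordered pairs of distinct edges of $H$ times the number of injections of the remaining vertices. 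So the combinatorial core is identical; your formulation buys a cleaner intermediate statement (a uniform column-sum, i.e.\ $\ell^1\to\ell^1$, bound on the Hessian, $\sup_Z\max_{\{k,l\}}\sum_{\{i,j\}}\abs{\partial^{2}_{ij,kl}f(Z)}\le\tfrac12\sum_i\abs{\beta_i}m_i(m_i-1)$, which isolates where multilinearity and $\abs{Z_{ij}}\le1$ enter) at the cost of the discrete-versus-continuous derivative bookkeeping, which the paper's telescoping argument sidesteps entirely.
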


Using this lemma, we have that 

\begin{eqnarray}
\norm{\Phi_{f}\left(X\right)-\Phi_{f}\left(Y\right)}_{1} & = & \norm{\frac{\one+\tanh\left(\grad f\left(X\right)\right)}{2}-\frac{\one+\tanh\left(\grad f\left(Y\right)\right)}{2}}_{1}\nonumber \\
\left(\text{by equation (\ref{eq:tanh_is_contracting})}\right) & \leq & \frac{1}{2}\norm{\grad f\left(X\right)-\grad f\left(Y\right)}_{1}\nonumber \\
\left(\text{by Lemma \ref{lem:subgraph_counting_can_contract}}\right) & \leq & \frac{1}{2}\sum_{i=1}^{\ell}\abs{\beta_{i}}m_{i}\left(m_{i}-1\right)\onenorm{X-Y}\nonumber \\
& = & \sum_{i=1}^{\ell}\abs{\beta_{i}}{m_{i} \choose 2}\onenorm{X-Y}\nonumber \\
& = & S_{\boldsymbol{\beta}}\onenorm{X-Y}.\label{eq:phi_is_contracting}
\end{eqnarray}

If $S_{\boldsymbol{\beta}}<1$ then $\Phi_{f}\left(X\right)$ is contracting,
and by Banach's fixed point theorem it has a unique fixed point in
the compact space of all matrices with entries in $\left[0,1\right]$;
we already know by Lemma \ref{lem:simple_constant_solution_D_bound}
that it is a constant solution $X_{c}=c\cdot\one$. This shows the
first part of Theorem \ref{thm:small_weights}. For the second
part, let $X\in\cal X_{f}$. Then by a simple calculation, 

\begin{align*}
\norm{X-X_{c}}_{1} & =\norm{X-\Phi_{f}\left(X\right)+\Phi_{f}\left(X\right)-X_{c}+\Phi_{f}\left(X_{c}\right)-\Phi_{f}\left(X_{c}\right)}_{1}\\
& \leq\norm{X-\Phi_{f}\left(X\right)}_{1}+\norm{\Phi_{f}\left(X\right)-\Phi_{f}\left(X_{c}\right)}_{1}+\norm{X_{c}-\Phi_{f}\left(X_{c}\right)}_{1}\\
& =\norm{X-\Phi_{f}\left(X\right)}_{1}+\norm{\Phi_{f}\left(X\right)-\Phi_{f}\left(X_{c}\right)}_{1}\\
\left(\text{by equation (\ref{eq:phi_is_contracting})}\right) & \leq\norm{X-\Phi_{f}\left(X\right)}_{1}+S_{\boldsymbol{\beta}}\norm{X-X_{c}}_{1}.
\end{align*}
Rearranging, we get the desired result:
\[
\onenorm{X-X_{c}}\leq\frac{\norm{X-\Phi_{f}\left(X\right)}_{1}}{1-S_{\boldsymbol{\beta}}}\leq\frac{5000C_{\boldsymbol{\beta}}^2}{1-S_{\boldsymbol{\beta}}}n^{15/16}.
\]
\end{proof}

\section{Two block model}\label{sec:two_block_model}
The proof of Theorem \ref{thm:two_block_model} is rather technical. It goes roughly as
follows: We assume that there exists a fixed point of the form 
\[
X=\alpha_{1}v_{1}v_{1}^{T}+\alpha_{2}v_{2}v_{2}^{T}-\mathrm{I}\left(\alpha_{1}+\alpha_{2}\right),
\]
where $v_{1}$ is the vector $\left(1,1,\ldots,1\right)$ whose entries
are all $1$, and $v_{2}$ is the vector $\left(-1,\ldots,-1,1,\ldots1\right)$
whose first $N/2$ entries are $-1$ and whose second $N/2$ entries
are $1$. From this assumption we arrive at pair of non-linear scalar
equations for $\alpha_{1}$ and $\alpha_{2}$; non-trivial solutions
of these equations guarantee a non-trivial block model for $X$. We
then show by direct calculation that for large enough $\abs{\beta}$,
such a solution does indeed exist. 

We postpone the proof to the appendix.

\section{Acknowledgments}

The first author is grateful to Sourav Chatterjee for inspiring him
to work on this topic and for an enlightening discussion. We thank
Miel Sharf for his advice on contraction and Amir Dembo and Yufei
Zhao for an insightful conversation. Finally, we thank the anonymous referees 
for spurring us to improve our results and for comments bettering the 
presentation of this work.

\bibliographystyle{plain}
\bibliography{large_bibliography}

\section{Appendix}
\begin{proof}[Proof of Lemma \ref{lem:johnson_linden_scalar}]
We'll show the proof only for the inequality $\left\langle g\left(v_{1}\right),g\left(v_{2}\right)\right\rangle -\left\langle v_{1},v_{2}\right\rangle \leq2\delta$;
the inequality $\left\langle v_{1},v_{2}\right\rangle -\left\langle g\left(v_{1}\right),g\left(v_{2}\right)\right\rangle \leq2\delta$
follows a similar calculation. 

The scalar product between any two vectors $x$ and $y$ can be written
as a function of $x+y$ and $x-y$:
\[
\left\langle x,y\right\rangle =\frac{1}{4}\left(\norm{x+y}^{2}-\norm{x-y}^{2}\right).
\]
We can now calculate: 
\begin{align*}
\left\langle g\left(v_{1}\right),g\left(v_{2}\right)\right\rangle  & =\frac{1}{4}\left(\norm{g\left(v_{1}\right)+g\left(v_{2}\right)}^{2}-\norm{g\left(v_{1}\right)-g\left(v_{2}\right)}^{2}\right)\\
 & =\frac{1}{4}\left(\norm{g\left(v_{1}+v_{2}\right)}^{2}-\norm{g\left(v_{1}-v_{2}\right)}^{2}\right)\\
 & \leq\frac{1}{4}\left(\left(1+\delta\right)\norm{v_{1}+v_{2}}^{2}-\left(1-\delta\right)\norm{v_{1}-v_{2}}^{2}\right)\\
 & =\frac{1}{4}\left(4\left\langle v_{1},v_{2}\right\rangle +\delta\norm{v_{1}+v_{2}}^{2}+\delta\norm{v_{1}-v_{2}}^{2}\right)\\
\left(\text{because }\norm{v_{1}\pm v_{2}}^{2}\leq4\right) & \leq\frac{1}{4}\left(4\left\langle v_{1},v_{2}\right\rangle +8\delta\right)\\
 & =\left\langle v_{1},v_{2}\right\rangle +2\delta.
\end{align*}
This implies that $\left\langle g\left(v_{1}\right),g\left(v_{2}\right)\right\rangle -\left\langle v_{1},v_{2}\right\rangle \leq2\delta$.
\end{proof}

\begin{lem}
	\label{lem:products_into_sums}Let $I\subseteq\left[n\right]$ be
	a set of indices. Then for any $X,Y\in\overline{\mathcal{C}_{n}}$, 
	\[
	\abs{\prod_{\alpha\in I}X_{\alpha}-\prod_{\alpha\in I}Y_{\alpha}}\leq\sum_{\alpha\in I}\abs{X_{\alpha}-Y_{\alpha}}.
	\]
\end{lem}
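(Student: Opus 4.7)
The plan is a standard telescoping argument that uses only the fact that every coordinate of a vector in $\overline{\mathcal{C}_n}=[0,1]^n$ lies in the unit interval. First, I would enumerate $I=\{\alpha_1,\ldots,\alpha_k\}$ and interpolate between the two products via the hybrids
\[
P_j = \prod_{i \leq j} Y_{\alpha_i} \prod_{i > j} X_{\alpha_i}, \qquad 0 \leq j \leq k,
\]
so that $P_0 = \prod_{\alpha \in I} X_\alpha$ and $P_k = \prod_{\alpha \in I} Y_\alpha$. Consecutive hybrids differ only in the single factor at index $\alpha_j$, giving the clean identity
\[
P_j - P_{j-1} = \left(\prod_{i<j} Y_{\alpha_i}\right) (Y_{\alpha_j} - X_{\alpha_j}) \left(\prod_{i>j} X_{\alpha_i}\right).
\]

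Next, writing $\prod_\alpha X_\alpha - \prod_\alpha Y_\alpha = -\sum_{j=1}^k (P_j - P_{j-1})$ and applying the triangle inequality, I would obtain
\[
\abs{\prod_{\alpha \in I} X_\alpha - \prod_{\alpha \in I} Y_\alpha} \leq \sum_{j=1}^{k} \left(\prod_{i<j} Y_{\alpha_i}\right) \abs{X_{\alpha_j} - Y_{\alpha_j}} \left(\prod_{i>j} X_{\alpha_i}\right).
\]
Since every entry of $X$ and $Y$ lies in $[0,1]$, both prefactor products are bounded by $1$, so the right-hand side is dominated by $\sum_{j=1}^k \abs{X_{\alpha_j} - Y_{\alpha_j}} = \sum_{\alpha \in I} \abs{X_\alpha - Y_\alpha}$, which is the claimed inequality.

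There is essentially no obstacle here: the only hypothesis used is $X, Y \in \overline{\mathcal{C}_n}$, which supplies the $[0,1]$ bound that controls the prefactors, and the telescoping identity is purely algebraic. The lemma is a standard tool for comparing monomials in bounded variables and will presumably be used in the paper to turn the differences of subgraph edge-weight monomials (as in equation \eqref{eq:how_to_calculate_f}) into sums over individual edges, which is the key estimate behind Lemma \ref{lem:subgraph_counting_can_contract}.
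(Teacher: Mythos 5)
Your telescoping argument is correct, and it is essentially the unrolled form of the paper's own proof, which peels off one factor at a time by induction on $\abs{I}$ and uses the same $[0,1]$ bound on the remaining product. No gaps.
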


\begin{proof}
By induction on $\abs I$. Let $\beta\in I$. Then 
\begin{align*}
\abs{\prod_{\alpha\in I}X_{\alpha}-\prod_{\alpha\in I}Y_{\alpha}} & =\abs{\prod_{\substack{\alpha\in I}
	}X_{\alpha}-\prod_{\alpha\in I}Y_{\alpha}+Y_{\beta}\prod_{\substack{\alpha\in I\\
			\substack{\alpha\neq\beta}
		}
	}X_{\alpha}-Y_{\beta}\prod_{\substack{\alpha\in I\\
			\substack{\alpha\neq\beta}
		}
	}X_{\alpha}}\\
& =\abs{\left(\prod_{\substack{\alpha\in I\\
			\substack{\alpha\neq\beta}
		}
	}X_{\alpha}\right)\left(X_{\beta}-Y_{\beta}\right)+Y_{\beta}\left(\prod_{\substack{\alpha\in I\\
			\substack{\alpha\neq\beta}
		}
	}X_{\alpha}-\prod_{\substack{\alpha\in I\\
			\substack{\alpha\neq\beta}
		}
	}Y_{\alpha}\right)}\\
& \leq\abs{X_{\beta}-Y_{\beta}}+\abs{\prod_{\substack{\alpha\in I\\
			\substack{\alpha\neq\beta}
		}
	}X_{\alpha}-\prod_{\substack{\alpha\in I\\
			\substack{\alpha\neq\beta}
		}
	}Y_{\alpha}},
\end{align*}
where the last inequality is because $\abs{X_{\alpha}}\leq1$ for all $\alpha$.
\end{proof}

\begin{proof}[Proof of Lemma \ref{lem:subgraph_counting_can_contract}]

It is enough to show the result for a function $f$ that counts
just a single subgraph $H=\left(V,E\right)$ with $m:=\abs E$; the
general result follows by linearity of the derivative and the triangle
inequality. By equation (\ref{eq:how_to_calculate_grad_f}), 
\[
\partial f_{ij}\left(X\right)=\frac{\beta}{\left(N-2\right)\left(N-3\right)\ldots\left(N-m+1\right)}\sum_{\left(a,b\right)\in E}\sum_{\underset{q_{a}=i,q_{b}=j}{\underset{\text{\ensuremath{q} has distinct elements}}{q\in\left[N\right]^{m}}}}\prod_{\underset{\left\{ l,l'\right\} \neq\left\{ a,b\right\} }{\left(l,l'\right)\in E}}X_{q_{l},q_{l'}}.
\]
The difference between the gradients is then 
\begin{align*}
\norm{\grad f\left(X\right)-\grad f\left(Y\right)}_{1} & = \sum_{ij}\bigg\lvert \frac{\abs{\beta}}{\left(N-2\right)\left(N-3\right)\ldots\left(N-m+1\right)}\cdot\\
& \,\,\,\,\,\,\,\,\,\,\,\cdot\sum_{\left(a,b\right)\in E}\sum_{\underset{q_{a}=i,q_{b}=j}{\underset{\text{\ensuremath{q} has distinct elements}}{q\in\left[N\right]^{m}}}}\left(\prod_{\underset{\left\{ l,l'\right\} \neq\left\{ a,b\right\} }{\left(l,l'\right)\in E}}X_{q_{l},q_{l'}}-\prod_{\underset{\left\{ l,l'\right\} \neq\left\{ a,b\right\} }{\left(l,l'\right)\in E}}Y_{q_{l},q_{l'}}\right) \bigg\rvert.
\end{align*}
By Lemma \ref{lem:products_into_sums}, this can be bounded by 
\begin{align*}
\norm{\grad f\left(X\right)-\grad f\left(Y\right)}_{1} & \leq\sum_{ij}\frac{\abs{\beta}}{\left(N-2\right)\left(N-3\right)\ldots\left(N-m+1\right)}\cdot\\
& \,\,\,\,\,\,\,\,\,\,\,\cdot\sum_{\left(a,b\right)\in E}\sum_{\underset{q_{a}=i,q_{b}=j}{\underset{\text{\ensuremath{q} has distinct elements}}{q\in\left[N\right]^{m}}}}\sum_{\underset{\left\{ l,l'\right\} \neq\left\{ a,b\right\} }{\left(l,l'\right)\in E}}\abs{X_{q_{l},q_{l'}}-Y_{q_{l},q_{l'}}}.
\end{align*}
Fix a pair of vertices $\alpha,\beta$. By symmetry, as $i$ and $j$
span over all possible pairs of vertices, the term $\abs{X_{\alpha,\beta}-Y_{\alpha,\beta}}$
appears $m\left(m-1\right)\left(N-2\right)\left(N-3\right)\ldots\left(N-m+1\right)$ times. Thus 
\[
\norm{\grad f\left(X\right)-\grad f\left(Y\right)}_{1}\leq\abs{\beta}m\left(m-1\right)\onenorm{X-Y}.
\]
\end{proof}

\begin{lem}
\label{lem:unique_tanh_intersection}For every $\alpha\in\r$, the
equation 
\[
2x-1=\tanh\left(\alpha x^{2}\right)
\]
has a unique solution with $x\in\left(0,1\right)$.
\end{lem}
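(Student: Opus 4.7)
The plan is to rewrite $2x-1 = \tanh(\alpha x^2)$ as a scalar equation $\psi(x) = \alpha$ and prove that $\psi$ is a strictly increasing bijection of $(0,1)$ onto $\r$; existence and uniqueness will then follow simultaneously.

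Since $x \in (0,1)$ forces $2x - 1 \in (-1, 1)$, applying $\mathrm{arctanh}$ to both sides gives the equivalent equation $\alpha x^2 = \mathrm{arctanh}(2x-1) = \frac{1}{2}\ln\frac{x}{1-x}$. Defining
\[
\psi(x) = \frac{1}{2x^2}\ln\frac{x}{1-x},
\]
solutions of the original equation in $(0,1)$ correspond bijectively to solutions of $\psi(x) = \alpha$. Standard limit computations show $\psi(x) \to -\infty$ as $x \to 0^+$ and $\psi(x) \to +\infty$ as $x \to 1^-$, so continuity of $\psi$ already delivers existence for every $\alpha \in \r$.

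For uniqueness I would show that $\psi$ is strictly monotone on $(0,1)$. A direct calculation gives
\[
\psi'(x) = \frac{1}{x^3}\left(\frac{1}{2(1-x)} - \ln\frac{x}{1-x}\right),
\]
so the task reduces to proving the auxiliary inequality $h(x) := \frac{1}{2(1-x)} - \ln\frac{x}{1-x} > 0$ on $(0,1)$. Differentiating $h$ and solving $h'(x) = 0$ reduces to the elementary identity $x(1-x) = 2(1-x)^2$, whose unique root in $(0,1)$ is $x = 2/3$. Since $h(x) \to +\infty$ at both endpoints, this critical point is the global minimum of $h$, and evaluating gives $h(2/3) = \frac{3}{2} - \ln 2 > 0$, closing the argument.

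The only genuine obstacle is the positivity of $h$: unlike the monotonicity of $\psi$ itself, it cannot be deduced from endpoint behavior alone, since $h'$ does change sign inside $(0,1)$. Fortunately the critical point equation collapses to a linear one, so locating the minimum and checking its sign is a single elementary verification.
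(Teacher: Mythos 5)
Your proof is correct, and it takes a genuinely different route from the paper. You separate the parameter: writing the equation as $\alpha=\psi(x)$ with $\psi(x)=\tfrac{1}{2x^{2}}\ln\tfrac{x}{1-x}$, you get existence and uniqueness for all $\alpha$ simultaneously from the single fact that $\psi$ is a strictly increasing bijection of $(0,1)$ onto $\r$; the whole lemma then reduces to the elementary inequality $\tfrac{3}{2}>\ln 2$ at the unique critical point $x=2/3$ of the auxiliary function $h$. The paper instead keeps $\alpha$ fixed and argues by cases on its sign: for $\alpha\le 0$ the two curves $2x-1$ and $\tanh(\alpha x^{2})$ have opposite monotonicity, while for $\alpha>0$ it bounds the maximum of $\frac{d}{dx}\tanh(\alpha x^{2})$ (located via the second-derivative condition) by $2$ on $(\tfrac12,1)$ to rule out a second crossing. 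Your argument is more uniform and avoids the paper's somewhat delicate optimization of the $\tanh$ derivative; as a bonus it shows the root $x_{0}$ is a continuous, strictly increasing function of $\alpha$. The paper's version, on the other hand, produces derivative bounds on $\varphi_{\boldsymbol\beta}$-type functions that are of independent use in its contraction arguments. Two small points you should spell out when writing this up: the limit $h(x)\to+\infty$ as $x\to1^{-}$ involves a competition between $\tfrac{1}{2(1-x)}\to+\infty$ and $-\ln\tfrac{x}{1-x}\to-\infty$ (the former wins since $\tfrac{1}{2u}+\ln u\to+\infty$ as $u\to0^{+}$), and the step from ``unique critical point plus blow-up at both endpoints'' to ``global minimum at that point'' deserves a sentence; neither is a gap.
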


\begin{proof}
Denote $g\left(x\right)=2x-1$ and $h\left(x\right)=\tanh\left(\alpha x^{2}\right)$;
we must then show that there is a unique point $x\in\left(0,1\right)$
such that $g\left(x\right)=h\left(x\right)$.
\begin{itemize}
\item The case $\alpha=0$ is solved by $x=\frac{1}{2}$.
\item The case $\alpha<0$: The function $g\left(x\right)$ is strictly
increasing with $g\left(0\right)=-1$ and $g\left(1\right)=1$, while
$h\left(0\right)=0$ and is strictly decreasing. A solution exists
as both functions are continuous.
\item The case $\alpha>0$: The function $g\left(x\right)$ is increasing
with $g\left(0\right)=-1$ and $g\left(1\right)=1$, while $h\left(0\right)=0$
and $h$ is strictly bounded by $1$; hence by continuity a solution
exists. For uniqueness of this solution, denote the smallest point
of intersection of $g$ and $h$ by $x_{1}$. Note that $x_{1}>\frac{1}{2}$,
since $g\left(\frac{1}{2}\right)=0$ and $h\left(\frac{1}{2}\right)>0$.
Since $g\left(x\right)<h\left(x\right)$ in the interval $\left[0,x_{1}\right)$,
the derivative $h'$ must be no greater than $g'=2$ at $x_{1}$.
But in order for there to be another point of intersection, the derivative
must be larger than $2$ at some point in the interval $\left[x_{1},1\right]$.
Differentiating, we have
\begin{equation}
h'\left(x\right)=\frac{2\alpha x}{\cosh^{2}\left(\alpha x^{2}\right)}.\label{eq:h_deriv}
\end{equation}
Differentiating again, we have
\[
h''\left(x\right)=\frac{2\alpha}{\cosh^{2}\left(\alpha x^{2}\right)}\left(1-4\alpha x^{2}\tanh\left(\alpha x^{2}\right)\right).
\]
The maximum of the derivative is attained when the second derivative
is $0$, that is, $1-4\alpha x^{2}\tanh\left(\alpha x^{2}\right)=0$.
This implies that $2\alpha x=\frac{1}{2\tanh\left(\alpha x^{2}\right)x}$.
Substituting this into equation (\ref{eq:h_deriv}), we get that for
all $\frac{1}{2}<x<1$,
\begin{align*}
h'\left(x\right) & =\frac{2\alpha x}{\cosh^{2}\left(\alpha x^{2}\right)}\\
 & \leq\frac{1}{x\cdot2\tanh\left(\alpha x^{2}\right)\cosh^{2}\left(\alpha x^{2}\right)}\\
 & =\frac{1}{x\cdot2\sinh\left(\alpha x^{2}\right)\cosh\left(\alpha x^{2}\right)}\\
 & =\frac{1}{x\cdot\sinh\left(2\alpha x^{2}\right)}<2
\end{align*}
since $x>\frac{1}{2}$ and $\sinh\left(2\alpha x^{2}\right)>1$. Hence
no other intersection point exists.
\end{itemize}
See Figure \ref{fig:tanh_intersections} for a visual illustration
of $g$ and $h$.

\begin{figure}[H]
\begin{centering}
\includegraphics[scale=0.5]{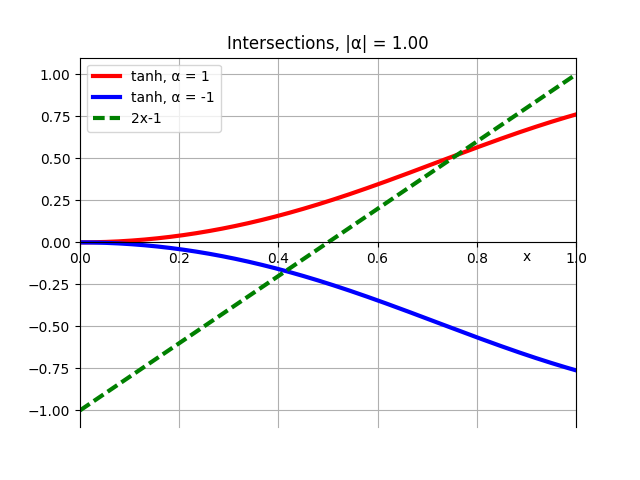}\includegraphics[scale=0.5]{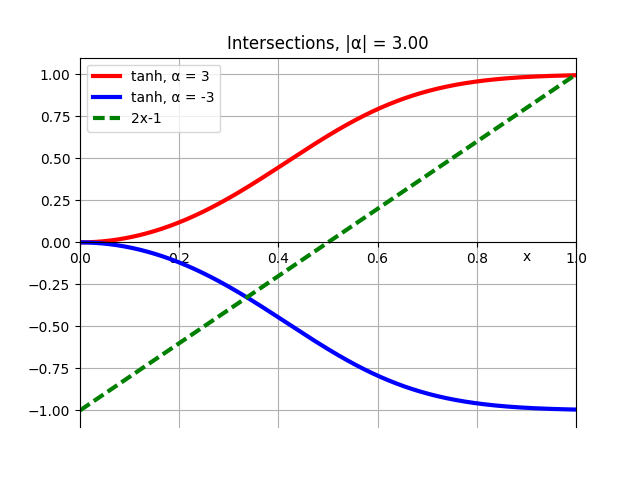}
\par\end{centering}
\caption{Two examples showing that there is only one intersection between $2x-1$
and $\tanh\left(\alpha x^{2}\right)$. \label{fig:tanh_intersections}}
\end{figure}
\end{proof}
\begin{proof}[Proof of Theorem \ref{thm:two_block_model}]
For simplicity, instead of solving the equation $X=\frac{\one+\tanh\left(\frac{3\beta}{N-2}\overline{X^{2}}\right)}{2}$
for negative $\beta$, we will solve the equation $X=\frac{\one-\tanh\left(\frac{\beta}{N-2}\overline{X^{2}}\right)}{2}$
for positive $\beta$ (where we assimilated the factor of $3$ inside
$\beta$). 

Denote by $v_{1}$ the vector $\left(1,1,\ldots,1\right)$ whose entries
are all $1$, and by $v_{2}$ the vector $\left(-1,\ldots,-1,1,\ldots1\right)$
whose first $N/2$ entries are $-1$ and whose second $N/2$ entries
are $1$. Let 
\[
X=\alpha_{1}v_{1}v_{1}^{T}+\alpha_{2}v_{2}v_{2}^{T}-\mathrm{I}\left(\alpha_{1}+\alpha_{2}\right).
\]
Then $X$ is a symmetric matrix with $0$ on the diagonal, $\alpha_{1}+\alpha_{2}$
in the top left and bottom right quarters, and $\alpha_{1}-\alpha_{2}$
in the top right and bottom left quarters. Squaring $X$, we get
\begin{align*}
X^{2} & =\left(\alpha_{1}v_{1}v_{1}^{T}+\alpha_{2}v_{2}v_{2}^{T}-\mathrm{I}\left(\alpha_{1}+\alpha_{2}\right)\right)^{2}\\
 & =\alpha_{1}^{2}\left(v_{1}v_{1}^{T}\right)^{2}+\alpha_{2}\left(v_{2}v_{2}^{T}\right)^{2}+\mathrm{I}\left(\alpha_{1}+\alpha_{2}\right)^{2}-2\alpha_{1}\left(\alpha_{1}+\alpha_{2}\right)v_{1}v_{1}^{T}-2\alpha_{2}\left(\alpha_{1}+\alpha_{2}\right)v_{2}v_{2}^{T}\\
 & =\left(\alpha_{1}^{2}\left(N-2\right)-2\alpha_{1}\alpha_{2}\right)v_{1}v_{1}^{T}+\left(\alpha_{2}^{2}\left(N-2\right)-2\alpha_{1}\alpha_{2}\right)v_{2}v_{2}^{T}+\mathrm{I}\left(\alpha_{1}+\alpha_{2}\right)^{2}.
\end{align*}
Setting the diagonal to zero, we have
\[
\overline{X^{2}}=\left(\alpha_{1}^{2}\left(N-2\right)-2\alpha_{1}\alpha_{2}\right)v_{1}v_{1}^{T}+\left(\alpha_{2}^{2}\left(N-2\right)-2\alpha_{1}\alpha_{2}\right)v_{2}v_{2}^{T}-\mathrm{I}\left(\alpha_{1}^{2}\left(N-2\right)+\alpha_{2}^{2}\left(N-2\right)-4\alpha_{1}\alpha_{2}\right).
\]
So $\frac{\beta}{N-2}\overline{X^{2}}$ is a symmetric matrix with
$0$ on the diagonal, $\frac{\beta}{N-2}\left(\left(\alpha_{1}^{2}+\alpha_{2}^{2}\right)\left(N-2\right)-4\alpha_{1}\alpha_{2}\right)$
in the top left and bottom right quarters, and $\beta\left(\alpha_{1}^{2}-\alpha_{2}^{2}\right)$
in the top right and bottom left quarters. The matrix $\tanh\left(\frac{\beta}{N-2}\overline{X^{2}}\right)$
can then also be written as a sum of the form $av_{1}v_{1}^{T}+bv_{2}v_{2}^{T}-\mathrm{I}\left(a+b\right)$,
where
\begin{align}
\tanh\left(\beta\left(\left(\alpha_{1}^{2}+\alpha_{2}^{2}\right)\left(N-2\right)-\frac{4}{N-2}\alpha_{1}\alpha_{2}\right)\right) & =a+b\nonumber \\
\tanh\left(\beta\left(\alpha_{1}^{2}-\alpha_{2}^{2}\right)\right) & =a-b.\label{eq:alphas_and_tanhs}
\end{align}
The expression$\frac{\one-\tanh\left(\frac{\beta}{N-2}\overline{X^{2}}\right)}{2}$
can then be written as
\[
\frac{\one-\tanh\left(\frac{\beta}{N-2}\overline{X^{2}}\right)}{2}=\frac{1-a}{2}v_{1}v_{1}^{T}-\frac{b}{2}v_{2}v_{2}^{T}+\mathrm{I}\left(\frac{1}{2}a+\frac{1}{2}b-\frac{1}{2}\right).
\]
Equating this with $X$, we get
\begin{align*}
\alpha_{1} & =\frac{1-a}{2}\\
\alpha_{2} & =-\frac{b}{2}.
\end{align*}
Rearranging and plugging into equation (\ref{eq:alphas_and_tanhs}),
we obtain the following two equations in two variables:
\begin{align}
\tanh\left(\beta\left(\left(\alpha_{1}^{2}+\alpha_{2}^{2}\right)-\frac{4}{N-2}\alpha_{1}\alpha_{2}\right)\right) & =1-2\alpha_{1}-2\alpha_{2}\nonumber \\
\tanh\left(\beta\left(\alpha_{1}^{2}-\alpha_{2}^{2}\right)\right) & =1-2\alpha_{1}+2\alpha_{2}.\label{eq:main_tanh_equations_alpha}
\end{align}
We will now show that for large enough $\beta$, these equations have
at least two solutions. As shown in Lemma \ref{lem:simple_constant_solution_D_bound},
there is always a constant $X=c\cdot\one$ is solution to the fixed
point equation (\ref{eq:fixed_point_equation}). It corresponds to
the case $\alpha_{2}=0$; in this case the two equations both identify
to $\tanh\left(\beta\alpha_{1}^{2}\right)=1-2\alpha_{1}.$ We must
therefore show that that for large enough $\beta$, there is a solution
with $\alpha_{2}\neq0$.

Let us change variables in order to bring the equations to a more
friendly form. Denote $x=\alpha_{1}+\alpha_{2}$ and $y=\alpha_{1}-\alpha_{2}$.
Then $\alpha_{1}^{2}-\alpha_{2}^{2}=xy$, $\alpha_{1}^{2}+\alpha_{2}^{2}=\frac{1}{2}\left(x^{2}+y^{2}\right)$
and $\alpha_{1}\alpha_{2}=\frac{1}{4}\left(x^{2}-y^{2}\right)$, and
(\ref{eq:main_tanh_equations_alpha}) can be rewritten as 
\begin{align}
\tanh\left(\frac{\beta}{N-2}\left(\frac{N-4}{2}x^{2}+\frac{N}{2}y^{2}\right)\right) & =1-2x\nonumber \\
\tanh\left(\beta xy\right) & =1-2y.\label{eq:main_tanh_equations_xy}
\end{align}
We now need to show that there exists a solution with $x\neq y$. 

The matrix $X$ has entries in $\left[0,1\right]$, so we know that

\begin{align*}
0 & \leq\alpha_{1}-\alpha_{2}\leq1\\
0 & \leq\alpha_{1}+\alpha_{2}\leq1.
\end{align*}
Hence $x$ and $y$ are also in $\left[0,1\right]$. For the first
equation in (\ref{eq:main_tanh_equations_xy}), if $x$ is small enough,
then there is a unique $y\in\r$ the satisfies it. Denote this $y$
by $g\left(x\right)$; its range and domain will be calculated later.
For the second equation, a unique $y\in\left(0,1\right)$ exists for
all $x\in\left[0,1\right]$ since $\tanh\left(\beta xy\right)$ is
an increasing function of $y$ while $1-2y$ is a decreasing function
$y$. Denote this $y$ by $h\left(x\right):\left[0,1\right]\rightarrow\left(0,1\right)$.

Showing that a non-constant solution exists therefore requires showing
that $g$ and $h$ intersect at a point for which $x\neq y$. Figure
\ref{fig:g_h_intersections} shows that this is indeed the case for
large enough $\beta$ (by numerical calculations, the solution first
appears at around $\beta\approx22$, if we approximate $N-4\approx N-2\approx N$).

\begin{figure}[H]
\begin{centering}
\includegraphics[scale=0.5]{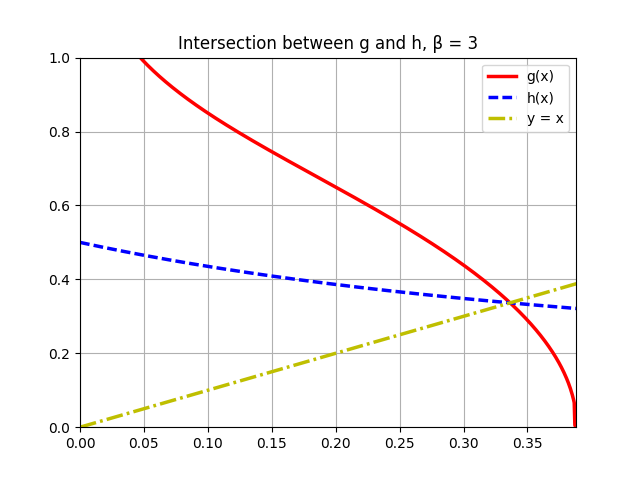}\includegraphics[scale=0.5]{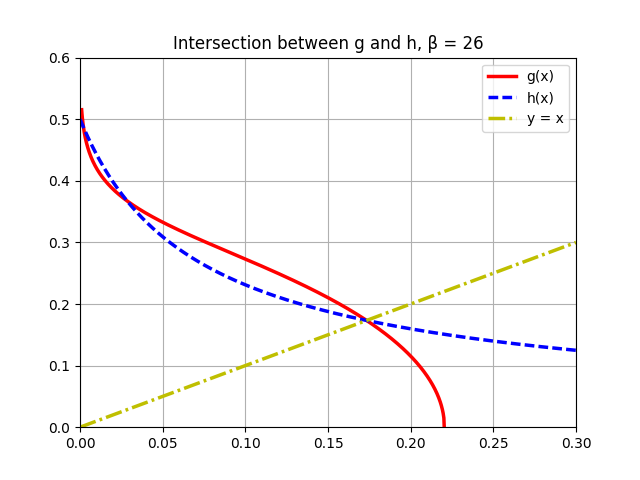}
\par\end{centering}
\caption{Two examples illustrating the behavior of $g$ and $h$. For small
$\beta$ there is only one intersection between them, while for large
$\beta$ there are $3$. For making these images, $N$ was assumed
large enough so that $N-4\approx N-2\approx N$. \label{fig:g_h_intersections}}
\end{figure}

Let us now grit our teeth and show this result analytically. First
consider $h$. It satisfies the functional equation
\[
\tanh\left(\beta xh\left(x\right)\right)-1+2h\left(x\right)=0.
\]
At $x=0$, we must have $h\left(0\right)=\frac{1}{2}$. Differentiating,
we get

\begin{align*}
\frac{\beta\left(h\left(x\right)+xh'\left(x\right)\right)}{\cosh^{2}\left(\beta xh\left(x\right)\right)}+2h'\left(x\right) & =0.
\end{align*}
Isolating $h'$, we obtain
\[
h'\left(x\right)=-\frac{\beta h\left(x\right)}{\beta x+2\cosh^{2}\left(\beta xh\left(x\right)\right)}.
\]
Thus $h$ is decreasing. Forgoing calculations, differentiating again
shows that $h''$ is positive. Hence $h'$ is increasing, so we can
bound $h'$ by 
\begin{align}
h'\left(x\right) & \geq h'\left(0\right)\nonumber \\
 & =-\frac{\beta h\left(0\right)}{\beta\cdot0+2\cosh^{2}\left(\beta\cdot0\cdot h\left(0\right)\right)}\nonumber \\
 & =-\beta/2.\label{eq:derivative_of_h_is_bounded}
\end{align}
Now consider $g$. It satisfies the functional equation 

\begin{equation}
\tanh\left(\frac{\beta}{N-2}\left(\frac{N-4}{2}x^{2}+\frac{N}{2}g^{2}\left(x\right)\right)\right)-1+2x=0\label{eq:g_functional_equation}
\end{equation}
First let us calculate its domain. 

There exists an $x_{1}>0$ such that $g\left(x_{1}\right)=1$. Indeed,
setting $g\left(x\right)=1$, we have
\[
\tanh\left(\frac{\beta}{2}\left(\frac{N-4}{N-2}x^{2}+\frac{N}{N-2}\right)\right)=1-2x.
\]
At $x=0$, the left hand side is equal to $\tanh\left(\frac{\beta}{2}\frac{N}{N-2}\right)$,
which is smaller than $1$. The left hand side is increasing as a
function of $x$, while the right hand side is decreasing as a function
of $x$, with derivative $-2$. Hence a solution $x_{1}$ exists,
with 
\[
x_{1}\leq\frac{1-\tanh\left(\frac{\beta}{2}\right)}{2}.
\]
Using $\tanh\left(z\right)=\frac{1-e^{-2z}}{1+e^{2z}}$, this can
also be written as
\[
x_{1}\leq\frac{1-\frac{1-e^{-\beta}}{1+e^{-\beta}}}{2}=\frac{\frac{2e^{-\beta}}{1+e^{-\beta}}}{2}=\frac{2e^{-\beta}}{1+e^{-\beta}}\leq2e^{-\beta}.
\]
There exists an $x_{2}$ such that $g\left(x_{2}\right)=0$. Indeed,
setting $g\left(x\right)=0$, we get
\[
\tanh\left(\beta\frac{N-4}{2\left(N-2\right)}x^{2}\right)=1-2x,
\]
and a unique solution exists by Lemma \ref{lem:unique_tanh_intersection}.
It is clear that for all $x_{1}<x<x_{2}$, a unique solution exists
for $g\left(x\right)$. Differentiating equation (\ref{eq:g_functional_equation}),
we get

\[
\frac{\frac{\beta}{N-2}\left(\left(N-4\right)x+Ng\left(x\right)g'\left(x\right)\right)}{\cosh^{2}\left(\frac{\beta}{N-2}\left(\frac{N-4}{2}x^{2}+\frac{N}{2}g^{2}\left(x\right)\right)\right)}+2=0,
\]
and isolating $g'$, we obtain
\[
g'\left(x\right)=\frac{-2\cosh^{2}\left(\frac{\beta}{N-2}\left(\frac{N-4}{2}x^{2}+\frac{N}{2}g^{2}\left(x\right)\right)\right)-\frac{\beta}{N-2}\left(N-4\right)x}{\beta g\left(x\right)}.
\]
This is negative, and so $g$ is decreasing. The domain of $g$ is
therefore $\left[x_{1},x_{2}\right]$, and its range is $\left[0,1\right]$.

We may now finally inspect the intersection of $g$ and $h$. Let
$\eps=\frac{2}{\beta^{2}}$, and let $\beta$ be large enough so that
$\frac{1}{2}\eps=\frac{1}{\beta^{2}}>2e^{-\beta}>x_{1}$; this implies
that $\eps-x_{1}\geq\frac{1}{\beta^{2}}$. By (\ref{eq:derivative_of_h_is_bounded})
and the fact that $h\left(0\right)=\frac{1}{2}$, we have that 
\begin{align*}
h\left(\eps\right) & \geq\frac{1}{2}-\eps\beta/2\\
 & =\frac{1}{2}-\frac{1}{\beta}.
\end{align*}
Assume by contradiction that in the interval $\left[x_{1},\eps\right]$,
there is no intersection between $g$ and $h$. Since $g\left(x_{1}\right)=1>h\left(x_{1}\right)$,
this means that in $g\left(x\right)>h\left(x\right)$ for the entire
interval $\left[x_{1},\eps\right]$. In particular we have $g\left(x\right)>\frac{1}{2}-\frac{1}{\beta}$.
We can then give a bound on the derivative $g'$:
\begin{align*}
g'\left(x\right) & =\frac{-2\cosh^{2}\left(\frac{\beta}{N-2}\left(\frac{N-4}{2}x^{2}+\frac{N}{2}g^{2}\left(x\right)\right)\right)-\frac{\beta}{N-2}\left(N-4\right)x}{\beta g\left(x\right)}\\
 & \leq\frac{-2\cosh^{2}\left(\beta g^{2}\left(x\right)\right)}{\beta}\\
 & \leq\frac{-2\cosh^{2}\left(\beta\left(\frac{1}{2}-\frac{1}{\beta}\right)^{2}\right)}{\beta}\\
\left(\text{for \ensuremath{\beta>4}}\right) & \leq\frac{-2\cosh^{2}\left(\beta\left(\frac{1}{4}\right)^{2}\right)}{\beta}\\
 & =\frac{-2\cosh^{2}\left(\frac{\beta}{16}\right)}{\beta}.
\end{align*}
We then have 
\begin{align*}
g\left(\eps\right) & \leq g\left(x_{1}\right)+\left(\eps-x_{1}\right)\frac{-2\cosh^{2}\left(\frac{\beta}{16}\right)}{\beta}\\
 & \leq1+\frac{1}{\beta^{2}}\frac{-2\cosh^{2}\left(\frac{\beta}{16}\right)}{\beta}\\
 & =1-\frac{2\cosh^{2}\left(\frac{\beta}{16}\right)}{\beta^{3}}.
\end{align*}
This quantity goes to $-\infty$ as $\beta\goinf$. This is a contradiction,
as we assumed $g\left(x\right)\geq\frac{1}{2}-\frac{1}{\beta}$ in
the interval $\left[x_{1},\eps\right]$. Thus for $\beta$ large enough,
the curves $g$ and $h$ intersect at a point $x^{*}\in\left[x_{1},\frac{2}{\beta^{2}}\right]$.
This intersection point satisfies $g\left(x^{*}\right)\geq\frac{1}{2}-\frac{1}{\beta}$;
for $\beta>4$, we have $y=g\left(x^{*}\right)>\frac{1}{4}$. However
$x^{*}\leq\frac{2}{\beta^{2}}<\frac{1}{4}$. This intersection point
does not satisfy $x=y$ and therefore does not correspond to the constant
solution. 

Finally, as $\beta\goinf$, it is clear that $x^{*}\rightarrow0$
and $y^{*}=g\left(x^{*}\right)\rightarrow\frac{1}{2}$, implying that
$\alpha_{1}\rightarrow\frac{1}{4}$ and $\alpha_{2}\rightarrow-\frac{1}{4}$,
meaning that $X$ tends to the adjacency matrix of a bipartite graph.
\end{proof}

\end{document}